\numberwithin{equation}{section}
\def\R{\mathbb R}
\def\Z{\mathbb Z}
\def\C{\mathbb C}
\def\N{\mathbb N}
\def\E{\mathbb E}
\def\EC{\mathbb{E}_{\chi}}
\def\ECJ{\EC^{\bm{j}}}
\def\ee{\varepsilon}
\def\sj{\sigma(\bm{j})}
\def\sjr{\sigma^{\text{random}}(\bm{j})}
\def\ejr{\E^{\bm{j}, \text{rand}} }
\def\gcd{\operatorname{gcd}}
\DeclarePairedDelimiter\ceil{\lceil}{\rceil}
\DeclarePairedDelimiter\floor{\lfloor}{\rfloor}
\def\le {\leqslant}
\def\ge {\geqslant}
\newtheorem{theorem}{Theorem}[section]
\newtheorem{lemma}[theorem]{Lemma}
\newtheorem{proposition}[theorem]{Proposition}
\newtheorem{corollary}[theorem]{Corollary}
\newtheorem{conjecture}[theorem]{Conjecture}
\theoremstyle{remark}
\newtheorem{rem}[theorem]{Remark}
\theoremstyle{definition}
\numberwithin{equation}{section}
\theoremstyle{remark}
\renewcommand{\abs}[1]{\lvert #1 \rvert} 
\newcommand{\card}[1]{\lvert #1 \rvert}
\newcommand{\defeq}{\colonequals}
\newcommand{\maps}{\colon}
\newcommand{\ol}{\overline}
\def\F{\mathbb F}
\title{Harper's beyond square-root conjecture}
\author{Victor Y. Wang}
\address{Courant Institute, 251 Mercer Street, New York 10012, USA}
\address{IST Austria, Am Campus 1, 3400 Klosterneuburg, Austria}
\address{Institute of Mathematics, Academia Sinica, Taipei 106319, Taiwan}
\email{vywang@alum.mit.edu}
\author{Max Wenqiang Xu}
\address{Courant Institute, 251 Mercer Street, New York 10012, USA}
\email{maxxu1729@gmail.com}
\subjclass{Primary 11L40; Secondary 11K65, 11M06, 11M50, 11N37}
\date{}
\keywords{Classical $L$-functions,
random matrices, random multiplicative functions}
\begin{document}

\begin{abstract}
We explain how the (shifted) Ratios Conjecture for $L(s,\chi)$
would extend a randomization argument of Harper from a conductor-limited range to an unlimited range of ``beyond square-root cancellation'' for
character twists of the Liouville function.
As a corollary,
the Liouville function
would have nontrivial cancellation in arithmetic progressions of modulus just exceeding the well-known square-root barrier.
Morally,
the paper
passes from random matrices to random multiplicative functions.
\end{abstract}

\maketitle


\section{Introduction}
Studying character sums is a classical and central topic in number theory.
Recently, 
in \cite{harpertypical}, Harper established a rather striking new phenomenon that the typical character sums have better than square-root cancellation, which can be viewed as a successful de-randomized version of his earlier celebrated result in the random setting \cite{HarperLow}.
Precisely, he \cite[Theorem~1.1]{harpertypical} showed that
if $1\le x\le r$ and $\min(x, r/x) \to +\infty$, then
\[\EC |\sum_{1\le n \le x}  \chi(n)| = o(\sqrt{x}), \]
where $\EC$ denotes $\frac{1}{r-1}\sum_{\chi}$, and the summation is over all Dirichlet characters of a given modulus $r$, where $r$ is a prime for convenience.
Moreover, if $c\in \{\mu,\lambda\}$ is either the \emph{M\"{o}bius function} $\mu$, or the closely related \emph{Liouville function} $\lambda$, he also established that 
\begin{equation}\label{eqn: muchi}
 \EC |\sum_{1\le n \le x} c(n)  \chi(n)| = o(\sqrt{x})
\end{equation}
for the same range of $x$ \cite[Theorem~3]{harpertypical}.
As noted in \cite[paragraph after Theorem~3]{harpertypical},
the same phenomenon could also be proven for the continuous character family $\chi = n^{it}$.
That is, if $1\le x\le T$ and $\min(x, T/x) \to +\infty$, then
\[\frac{1}{T} \int_0^{T} |\sum_{1\le n\le x} c(n)n^{it}| \,dt = o(\sqrt{x}).\]

The restrictions $x\le r$ and $x\le T$ in the above results naturally appeared in Harper's proof, where he needed such a restriction to perform a clever perfect orthogonality trick.
However, he believed that the same sort of result should also hold for a wider range of $x$, even though perfect orthogonality itself no longer holds.
This is Conjecture~\ref{conj: harper}:
\begin{conjecture}[Harper]\label{conj: harper}
Fix $c\in \{\mu,\lambda\}$.
Let $A$ be a fixed positive constant.
Then
\begin{equation}
\label{discrete-case}
\EC |\sum_{1\le n \le x} c(n)\chi(n)| = o(\sqrt{x})
\end{equation}
as $x\to +\infty$, provided $1\le x\le r^{A}$.
Similarly, if $1\le x\le T^{A}$ and $x\to +\infty$, then
\begin{equation}
\label{continuous-case}
\frac{1}{T} \int_0^{T} |\sum_{1\le n\le x} c(n)n^{it}| \,dt = o(\sqrt{x}).
\end{equation}
Moreover, in both \eqref{discrete-case} and \eqref{continuous-case}, the quantitative upper bound $O\Big(\frac{\sqrt{x}}{(\log \log x)^{1/4}}\Big)$ holds.
\end{conjecture}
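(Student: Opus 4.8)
The plan is to work \emph{conditionally on the (shifted) Ratios Conjecture} for the family $\{L(s,\chi)\}_{\chi \bmod r}$ (and, for the continuous case \eqref{continuous-case}, for $\zeta$), and to feed its output into Harper's random-multiplicative-function machinery in place of the perfect-orthogonality step that confined \cite{harpertypical} to $x\le r$. Fix $c\in\{\mu,\lambda\}$; the two are treated the same way, since $\sum_{n}c(n)\chi(n)\,n^{-s}$ equals $1/L(s,\chi)$ for $c=\mu$ and $L(2s,\chi^{2})/L(s,\chi)$ for $c=\lambda$, which near $s=\tfrac12$ is $1/L(s,\chi)$ times the factor $L(1+2it,\chi^{2})$ (bounded on average over $\chi$, and itself governed by the Ratios/moment conjectures). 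The discrete and continuous cases run in parallel, with $r$, resp.\ $T$, as the large parameter; I describe the discrete case and write $M_\chi(x):=\sum_{m\le x}c(m)\chi(m)$. Note it is the \emph{upper} bound $O(\sqrt x/(\log\log x)^{1/4})$ that must be proved; in the random model this is the hard direction, achieved by Harper's barrier argument in \cite{HarperLow}.

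First I would pass from the partial sum to an Euler-product model. By Perron's formula and $\sum_m c(m)\chi(m)m^{-s}\sim 1/L(s,\chi)$, write $M_\chi(x)$ as a contour integral of $1/L(s,\chi)$ and move the contour to the critical line, so that morally $M_\chi(x)\approx \frac{\sqrt x}{2\pi}\int_{\R}\Phi(t)\,\frac{x^{it}}{L(\tfrac12+it,\chi)}\,dt$ for a fixed smooth kernel $\Phi$. Now split the Euler product of $1/L(\tfrac12+it,\chi)$ at a height $z$ (a fixed power of $x$): the short part $\prod_{p\le z}(1-\chi(p)p^{-1/2-it})^{\pm1}$ is the genuinely random, multiplicative-chaos piece, while the tail is a ``random matrix'' (characteristic-polynomial) piece in the hybrid Euler--Hadamard sense. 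The content of the Ratios Conjecture is precisely that, averaged over the family, this tail contributes only a bounded-in-distribution factor that leaves the order of magnitude unchanged: quantitatively, I would use it to evaluate the shifted second moments and shifted reciprocal moments $\EC\big[L(\tfrac12+\alpha,\chi)^{\pm1}\,\overline{L(\tfrac12+\beta,\chi)}^{\pm1}\big]$ and their logarithmic-derivative analogues, uniformly for shifts of size $\ll 1$. The key point is that the resulting main terms — ratios of $\zeta$-values — agree, up to $1+o(1)$, with the corresponding moments of the short Euler product of $\prod_{p\le x}(1-f(p)p^{-1/2-it})$ for a Steinhaus random multiplicative $f$, since the two models have the same local Euler factors; thus $\sum_{n\le x}c(n)f(n)$ (which for $c=\lambda$ is equal in law to $\sum_{n\le x}f(n)$) is identified as the correct random model for $M_\chi(x)/\sqrt x$.

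With these moment and conditional-moment data in hand the problem reduces to one already settled in the random setting. After the above replacements, $\EC|M_\chi(x)|/\sqrt x$ is the first absolute moment of an integral against $\Phi$ of a random-Euler-product object whose law matches Harper's, i.e.\ exactly the critical multiplicative chaos functional that he controls by $\asymp(\log\log x)^{-1/4}$ in \cite{HarperLow}: split the primes into the ranges $(x^{1/e^{k+1}},x^{1/e^{k}}]$, bound the conditional second moment at each scale, combine them by a barrier (ballot-type) estimate, and gain a final square root. Running that argument verbatim, but with the per-scale conditional second moments now supplied by the Ratios Conjecture rather than by perfect orthogonality, yields $\EC|M_\chi(x)|\ll \sqrt x/(\log\log x)^{1/4}$ for all $x\le r^{A}$, hence \eqref{discrete-case}; \eqref{continuous-case} is identical with $\zeta$ replacing $L(\cdot,\chi)$.

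The main obstacle I expect is the honest passage between the clean Euler-product model and the actual partial sum, uniformly over the family and with error terms genuinely of smaller order than $\sqrt x/(\log\log x)^{1/4}$. This has two faces. First, the Perron/contour step and the Euler-product split must be controlled for \emph{every} $\chi$, including the rare characters with anomalously small $|L(\tfrac12,\chi)|$ or with zeros clustering near the central point, where $1/L$ misbehaves; here one needs large-deviation bounds for $\log|L(\tfrac12+it,\chi)|$ across the family — all of its moments, hence near-Gaussian tails and the requisite barrier estimates, again flow from the Ratios Conjecture, but assembling them with the right uniformity is delicate. Second, one must verify that the ``dual/swap'' term in the Ratios Conjecture — of size $\asymp r^{-\alpha-\beta}$, a genuine positive constant precisely in the regime $x\asymp r^{A}$ where perfect orthogonality fails — does not corrupt the functionals of the random model being computed; showing that its contribution is harmless for the conditional second moments driving Harper's argument is, I anticipate, the technical heart of the matter.
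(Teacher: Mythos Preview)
Your high-level strategy --- use the Ratios Conjecture to replace the perfect-orthogonality step in Harper's argument, thereby lifting the restriction $x\le r$ --- is exactly the paper's strategy. But the concrete execution you describe diverges from the paper's in an important way, and the divergence creates a real gap.

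You propose to write $M_\chi(x)$ via Perron as an integral of $1/L(\tfrac12+it,\chi)$ on the critical line, split that Euler product into a short part and a tail, and use Ratios to say the tail is ``bounded in distribution.'' There are two problems. First, $L(\tfrac12+it,\chi)$ has zeros on the critical line, so $1/L$ has poles there and the integral is not even defined; you cannot literally move Perron to $\Re s=\tfrac12$. Second, and more seriously, the Ratios Conjecture as stated gives you asymptotics for $\E_\chi\,L\overline L/(L\overline L)$ at fixed shifts, not the distributional control of the ``tail'' of $1/L$ that a hybrid Euler--Hadamard splitting would require. Extracting the latter from the former (the ``splitting conjecture'') is itself a substantial open problem, and your proposal does not indicate how to do it. You correctly flag the bad-$\chi$ and dual-term issues as obstacles, but the route you have chosen walks directly into them.

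The paper avoids both issues by a more indirect path. It works at $\Re s=\tfrac12+\varepsilon$ (where $1/L$ is holomorphic under GRH), and rather than trying to match $M_\chi(x)$ to a Steinhaus object pointwise, it matches only the \emph{twisted second moments} $\E_\chi\,\chi(m_1)\overline\chi(m_2)\,|M_\chi(x)|^2$ for small twists $m_1,m_2\le r^{\varepsilon}$ to their Steinhaus counterparts (Lemma~\ref{LEM:even-moment-estimate}). This is derived from a \emph{twisted} Ratios statement (Conjecture~\ref{CNJ:twisted-R2}, itself extracted from the untwisted Ratios Conjecture by Mellin inversion), and it is precisely the ``per-scale conditional second moment'' input you mention in your last paragraph --- but made rigorous. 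With that lemma in hand, the paper follows the conditioning architecture of \cite{harpertypical} (the partition functions $g_j$ applied to short prime sums $S_k(\chi)$), not a direct appeal to \cite{HarperLow}. The dual term in Ratios is handled by a contour shift that exploits its $r^{1-z_1-z_2}$ decay, not by any distributional argument.

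Two smaller points. Your claim that $\mu$ and $\lambda$, and discrete and continuous, ``run in parallel'' is optimistic: the paper treats only $c=\lambda$, discrete $\chi$, and explicitly says the other three cases need further technical work (for $\lambda$ one must pass through $L^\flat(s,\chi)=L(s,\chi)/L(2s,\chi^2)$, which requires an extra approximation argument, Proposition~\ref{PROP:lambda-twisted-R2}). And your ``morally $M_\chi(x)\approx\ldots$'' step already assumes GRH to shift contours; the paper makes this assumption explicit throughout.
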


This conjecture is due to Harper \cite[(1.2)]{harpertypical}, though strictly speaking he only records the conjecture in the M\"{o}bius case $c=\mu$.
(See also \cite{Gor24} for a conjecture for high moments.)
Given $c$, we refer to \eqref{discrete-case} and \eqref{continuous-case} as the \emph{discrete} and \emph{continuous} cases, respectively.
We concentrate on $c=\lambda$ and discrete $\chi$, which allows for the most elegant treatment out of all four possible cases.
We expect that with more technical work, the ideas involved would extend to the other three cases,
and to other families such as quadratic Dirichlet characters (when $c=\mu$).

The following theorem
concerns the family $\mathcal{F}_r$ of $L$-functions $L(s,\chi)$
associated to Dirichlet characters $\chi\bmod{r}$,
and the family $\mathcal{F}^\ast_r\subset \mathcal{F}_r$ of $L$-functions $L(s,\chi)$
associated to primitive characters $\chi\bmod{r}$.
We say that the Generalized Riemann Hypothesis (GRH) holds for a family
if it holds for each member of the family.
We follow the convention that the principal character $\chi_0\bmod{r}$
has $L$-function $L(s,\chi_0) = (1-r^{-s})\zeta(s)$,
whose GRH is equivalent to the ordinary Riemann Hypothesis (RH) for $\zeta(s)$.

\begin{theorem}\label{thm: ratio}
Assume GRH for $\mathcal{F}_r$,
and the Ratios Conjecture \cite[(5.6)]{conrey2008autocorrelation} for $\mathcal{F}^\ast_r$.
Then Conjecture~\ref{conj: harper} is true
in the discrete case \eqref{discrete-case} when $c=\lambda$.
\end{theorem}

Theorem~\ref{thm: ratio} will follow from Theorem~\ref{THM:detailed-implications}.
Conjecture~\ref{CNJ:R22} is the precise form of the Ratios Conjecture that we use.
See \cite{conrey2005integral,conrey2007applications,conrey2008autocorrelation} for details on the general Moments and Ratios Conjectures, and their rich history based on random matrices.

It is natural to hope that GRH alone might suffice,
by adapting techniques of \cite{soundararajan2009moments,harper2013sharp,szabo2024high} from moments to ratios.
See \cite{bui2021ratios,florea2021negative,bui2023negative}, and references within, for progress in this direction.
It would be interesting to pursue this further.
Let us also mention that over function fields $\F_q(t)$,
there is now enough progress on the Ratios Conjecture
that one can likely prove an unconditional version of Theorem~\ref{thm: ratio} in one or more families for all $q$ large enough in terms of $A$.
See \cite{sawin2022square,bergstrom2023hyperelliptic,MPPRW,ratio2024forthcoming}, and references within, for some relevant developments on ratios.

We believe Theorem~\ref{thm: ratio} is interesting and surprising in its own right,
but there is also a significant application
at $A\approx 2$.
Harper \cite{harpertypical} pointed out that by Perron's formula, one can use
the continuous case \eqref{continuous-case} of Conjecture~\ref{conj: harper} for $c=\mu$
to establish nontrivial cancellation of $\mu(n)$ in short intervals breaking the ``square-root barrier''.
Precisely, this means 
\begin{equation}\label{eqn: mobius}
    \sum_{x\le n \le x+y} \mu(n) = o(y)
\end{equation}
\emph{for all} $x$ and $y$ as long as $y\ge \sqrt{x}/W(x)$ for some $W(x)\to+\infty$ as $x\to +\infty$.
It is known that RH implies
\eqref{eqn: mobius}
if $y\ge x^{1/2+\ee}$;
in fact, $y\ge x^{1/2}e^{(\log{x})^{1/2+\ee}}$ suffices, by \cite[Theorem~1]{joni3} of Soundararajan.
The best unconditional result \cite{MT23}, due to 
Matom\"{a}ki and Ter\"{a}v\"{a}inen, gives an exponent around $0.55$.
We also remark that a key feature here is
that we require \eqref{eqn: mobius} to hold
for ``all" short intervals with length beyond certain threshold. If we only require ``almost all", then celebrated work of Matom\"{a}ki--Radziwi\l{}\l{} \cite{MR16} shows that it is sufficient to only require $y \to +\infty$.

The situation for arithmetic progressions $a\bmod{r}$ is similar under GRH, by Ye \cite{ye2014bounding},
but more complicated unconditionally, due to uniformity issues
near the $1$-line.
To our knowledge, there is no unconditional analog of \eqref{eqn: mobius} available when $r=x^{0.01}$,
but results are available for $r=x^{o(1)}$ (see \cite{joni1} for instance).
This is loosely related to the limited range $r \le (\log{x})^A$ in the Siegel--Walfisz Theorem.
However, we note that if one allows to take an average over $r$, then the Bombieri--Vinogradov large sieve inequality is as good as what the GRH gives.
We refer the reader to some recent developments by Maynard \cite{maynard1, maynard2, maynard3} on various special cases where better results can be obtained when averaging $r$ and references therein.
Other related results beyond the square-root range are available in works such as \cite{joni4,joni5,joni6}.

For simplicity and clarity, we conditionally break the ``square-root barrier" in the arithmetic-progression analog of \eqref{eqn: mobius} for $c=\lambda$, as a corollary of Theorem~\ref{thm: ratio}.

\begin{corollary}
\label{main-cor}
Under the same assumptions as in Theorem~\ref{thm: ratio}, we have as $x\to +\infty$
\begin{equation*}
\sum_{\substack{1\le n\le x \\ n\equiv a\bmod{r}}} \lambda(n) = o\Big(\frac{x}{r}\Big)
\end{equation*}
\emph{for all} $a\in \Z$ and primes $r\le W(x)\sqrt{x}$,
for some function $W(x)\to +\infty$ as $x\to +\infty$.
\end{corollary}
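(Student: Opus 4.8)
\textbf{Proof proposal for Corollary~\ref{main-cor}.}
The plan is to expand the indicator of the progression $n\equiv a\bmod{r}$ in Dirichlet characters and feed the resulting twisted Liouville sums into Theorem~\ref{thm: ratio}, using GRH to handle the range of small moduli in which Theorem~\ref{thm: ratio} is vacuous. Throughout I would take $r$ prime (as throughout the paper) and fix once and for all $W(x)\defeq(\log\log x)^{1/8}$, noting $W(x)\to+\infty$; one then checks the claimed bound for all $1\le a\le r\le W(x)\sqrt x$.

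First I would dispose of the degenerate case $r\mid a$, i.e.\ $a=r$. Since $\lambda$ is completely multiplicative, $\sum_{n\le x,\,r\mid n}\lambda(n)=\lambda(r)\sum_{m\le x/r}\lambda(m)$, and $\sum_{m\le y}\lambda(m)=o(y)$ by the prime number theorem; as $x/r\ge\sqrt x/W(x)\to+\infty$, this is $o(x/r)$. For $\gcd(a,r)=1$, orthogonality of the characters mod $r$ gives
\[
\Big|\sum_{\substack{1\le n\le x\\ n\equiv a\bmod{r}}}\lambda(n)\Big|
=\Big|\frac{1}{\phi(r)}\sum_{\chi\bmod{r}}\ol{\chi}(a)\sum_{1\le n\le x}\lambda(n)\chi(n)\Big|
\le\EC\Big|\sum_{1\le n\le x}\lambda(n)\chi(n)\Big|,
\]
where the last step uses $\phi(r)=r-1$ for $r$ prime, so that $\frac{1}{\phi(r)}\sum_{\chi\bmod{r}}=\EC$. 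It remains to bound the right-hand side, and here I would split on the size of $r$.

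If $r\le x^{1/3}$, then under GRH (a standard consequence, via Perron's formula applied to $\sum_n\lambda(n)\chi(n)n^{-s}=L(2s,\chi^2)/L(s,\chi)$ and then summed over $\chi\bmod{r}$) one has $\sum_{n\le x,\,n\equiv a\bmod{r}}\lambda(n)\ll x^{1/2+o(1)}$ uniformly, which is $o(x^{2/3})$ and hence $o(x/r)$ since $x/r\ge x^{2/3}$. If instead $x^{1/3}<r\le W(x)\sqrt x$, then $x<r^{3}$, so the quantitative form of Theorem~\ref{thm: ratio} applies with $A=3$ and gives $\EC\big|\sum_{1\le n\le x}\lambda(n)\chi(n)\big|\ll\sqrt x/(\log\log x)^{1/4}$. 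Combining this with the orthogonality bound and the inequality $\sqrt x\le W(x)\cdot(x/r)$, which is immediate from $r\le W(x)\sqrt x$, yields
\[
\Big|\sum_{\substack{1\le n\le x\\ n\equiv a\bmod{r}}}\lambda(n)\Big|
\ll\frac{\sqrt x}{(\log\log x)^{1/4}}
\le\frac{W(x)}{(\log\log x)^{1/4}}\cdot\frac{x}{r}
=\frac{1}{(\log\log x)^{1/8}}\cdot\frac{x}{r},
\]
which is $o(x/r)$, uniformly in $a$ and $r$. Since the two ranges cover $1\le r\le W(x)\sqrt x$ for $x$ large, this finishes the proof.

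This deduction is essentially routine, so there is no single hard obstacle; the one point that needs care is the interplay of the two ranges of $r$. The hypothesis $r\le W(x)\sqrt x$ permits $r$ to be tiny, where the constraint $x\le r^{A}$ of Theorem~\ref{thm: ratio} fails, as well as $r$ of order $\sqrt x$, which is the genuinely new regime; the two are bridged precisely because the fixed exponent $A$ in Theorem~\ref{thm: ratio} may be taken strictly larger than $2$, so that the cutoff $x^{1/A}$ sits comfortably below $x^{1/2-\ee}$ and the GRH bound $x^{1/2+o(1)}$ covers everything beneath it. The other mild point is that converting the $o(\sqrt x)$ of Theorem~\ref{thm: ratio} into the desired $o(x/r)$ is what forces $W(x)$ to grow slowly enough to be absorbed by the $(\log\log x)^{1/4}$ saving; invoking the quantitative form of the theorem makes this transparent and uniform over the whole range of $r$. (One expects the same argument, with the extra factor $r/\phi(r)=O(\log\log r)$ absorbed into a slightly slower choice of $W$, to handle composite $r$ as well, once the corresponding form of Theorem~\ref{thm: ratio} is available.)
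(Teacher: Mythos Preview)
Your proof is correct and follows essentially the same approach as the paper's own proof: split into a small-$r$ range handled by GRH and a large-$r$ range handled by the quantitative form of Theorem~\ref{thm: ratio} via character orthogonality. The only cosmetic differences are the choice of split point ($x^{1/3}$ versus the paper's $x^{0.49}$, with correspondingly $A=3$ versus $A=2.05$) and your explicit choice $W(x)=(\log\log x)^{1/8}$, which the paper notes can be any $(\log\log x)^b$ with $b<1/4$.
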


\begin{proof}
We first prove this assuming $r\ge x^{0.49}$.
The case $r\mid a$ follows from the prime number theorem
and the complete multiplicativity of $\lambda$.
It remains to treat the case $r\nmid a$, i.e.~$\gcd(a,r)=1$.
In this case, $\bm{1}_{n\equiv a\bmod{r}} = \EC \ol{\chi}(a)\chi(n)$.
Therefore,
\begin{equation*}
\sum_{\substack{1\le n\le x \\ n\equiv a\bmod{r}}} \lambda(n)
= \EC \ol{\chi}(a) \sum_{1\le n\le x} \lambda(n)\chi(n)
\ll \EC \abs{\sum_{1\le n\le x} \lambda(n)\chi(n)}
\ll \frac{\sqrt{x}}{W(x)^{1+\ee}},
\end{equation*}
say, by Theorem~\ref{thm: ratio} with $A=2.05$.
This is $O(\frac{x}{r W(x)^\ee}) = o(\frac{x}{r})$, since $r\le W(x)\sqrt{x}$.

(In fact, by Theorem~\ref{THM:detailed-implications}, we can take $W(x) = (\log\log{x})^b$ for any fixed $b<1/4$.)

On the other hand, the range $r\le x^{0.49}$ can be handled by GRH.
\end{proof}

This application is less interesting in $\F_q[t]$, where stronger techniques are available due to Sawin \cite{sawin2022square}.
We also note that the Ratios Conjecture can be directly used
to estimate averages like $\EC \sum_{1\le n\le x} \lambda(n)\chi(n)$
(via Perron's formula, as in \eqref{perron} of the present paper),
but not to estimate weighted averages like $\EC \ol{\chi}(a) \sum_{1\le n\le x} \lambda(n)\chi(n)$ in general,
except for special values of $a$ like $a=1$.

\begin{rem}
The analog of Corollary~\ref{main-cor}
for the short-interval case \eqref{eqn: mobius}
is likely susceptible to similar methods.
Conditionally on RH and the Ratios Conjecture for the continuous family of $L$-functions $\zeta(s+it_0)$ indexed by $t_0\in \R$, one should be able to
prove \eqref{eqn: mobius} for $y\ge \sqrt{x}/(\log \log x)^{1/4-\ee}$.
It would be interesting to work out the details.
\end{rem}

To understand how universal the phenomena above are, it would be desirable to prove a random matrix analog of Conjecture~\ref{conj: harper}, in the $N\times N$ unitary groups $U(N)$.
In this setting, partial sums $\sum_{1\le n\le x} c(n)\chi(n)$ roughly correspond to characters $\tr(A,\operatorname{Sym}^k\C^N)$, i.e.~Schur polynomials, for $k\ge 0$,
associated to symmetric powers $\operatorname{Sym}^k\C^N$ of the standard representation $\C^N$.
Let $\nu_N$ be the Haar probability measure on $U(N)$.
\begin{conjecture}\label{CNJ:RMT}
Fix $B>0$.
If $0\le k\le BN$ and $k\to +\infty$, then
$$\int_{U(N)} \abs{\tr(A,\operatorname{Sym}^k\C^N)}\, d\nu_N(A) = o(1).$$
\end{conjecture}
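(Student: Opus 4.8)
The plan is to mimic the structure of Harper's conductor-limited argument \cite{harpertypical}, but to replace the perfect-orthogonality step (which requires $x \le N$) with the exact asymptotics that the Ratios Conjecture supplies for autocorrelations of the $L$-functions $L(s,\chi)$ --- in the unitary setting, the characteristic polynomials $\Lambda_A(s) = \det(1 - s A)$ on $U(N)$. First I would pass from the Schur polynomial $\tr(A,\operatorname{Sym}^k\C^N)$ to a contour-integral / generating-function description: since $\sum_{k\ge 0} \tr(A,\operatorname{Sym}^k\C^N)\, z^k = \prod_{j}(1 - z e^{i\theta_j})^{-1} = 1/\Lambda_A(z)$, we have $\tr(A,\operatorname{Sym}^k\C^N) = \frac{1}{2\pi i}\oint_{\abs{z}=\rho} \frac{dz}{z^{k+1}\Lambda_A(z)}$ for any $0<\rho<1$. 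The Liouville twist is the statement that $1/\Lambda_A$ is the ``random model'' for $L(s,\lambda\chi)=L(s,\chi)/L(2s,\chi^2)$-type objects; on the random-matrix side the analogue is just $1/\Lambda_A$ itself (or, if one wants the genuine $\lambda$-analogue, $\Lambda_A(z^2)/\Lambda_A(z)$), and the point of Conjecture~\ref{CNJ:RMT} is that its partial sums, i.e.\ the Schur polynomials, are $o(1)$ in $L^1(\nu_N)$ for $k$ up to linear size in $N$.

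The key steps, in order. (1) Reduce to a second-moment computation on a shortened sum plus a low-complexity remainder: following Harper, split $[0,k]$ (or the Dirichlet polynomial) into ``large'' and ``small'' primes/parts at some threshold, and handle the small-prime part by an $L^2$ bound. (2) For the $L^2$ bound on the relevant Dirichlet polynomial $\sum_{k'\le K} \tr(A,\operatorname{Sym}^{k'}\C^N) z^{k'}$-type object, expand the square and integrate over $U(N)$; the moments $\int_{U(N)} \tr(A,\operatorname{Sym}^{a}\C^N)\overline{\tr(A,\operatorname{Sym}^{b}\C^N)}\,d\nu_N$ are governed by the asymptotics of $\int \Lambda_A(x)^{-1}\overline{\Lambda_A(y)}^{-1}$, which is exactly an instance of the (now rigorously known, in $U(N)$!) ratios/autocorrelation formula --- this is where the analogue of \cite[(5.6)]{conrey2008autocorrelation} enters, but unconditionally, since in $U(N)$ the ratios conjecture is a theorem (Conrey--Forrester--Snaith, and the combinatorial identities of Bump--Gamburd, etc.). (3) Feed these asymptotics into a Harper-type ``multiplicative chaos'' / critical-case argument: the saving $o(1)$ (conjecturally $(\log N)^{-1/4}$) comes from the same $\tfrac14$-phenomenon, namely that after the $L^2$ step one is left estimating $\E \min(1, \text{something})$ and the relevant Gaussian-like walk is at the critical point. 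In $U(N)$ this walk is $\sum_{j\le J} (e^{i\theta\cdot})/\sqrt{j}$ in the eigenangle statistics, and the one needs the analogue of Harper's low-moments estimate \cite{HarperLow}. (4) Control the error from using $\rho<1$ rather than $\rho=1$ (the truncation of the Euler product over ``primes'' $p>N$-analogue): here the restriction $k\le BN$ is used, exactly as $x\le r^A$ is used in Theorem~\ref{thm: ratio}, to keep the tail negligible.

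The main obstacle --- and it is a genuine one --- is step (3), transplanting Harper's self-contained ``better than square-root'' mechanism into $U(N)$. The arithmetic proof of Conjecture~\ref{conj: harper} in \cite{harpertypical} (conductor-limited) and its extension in Theorem~\ref{thm: ratio} rely on delicate facts about the multiplicative structure of $\lambda\chi$: the Euler product, the independence-like behaviour of $\chi(p)$ across primes $p$, and Harper's analysis of the resulting random Euler product as a critical multiplicative chaos. On $U(N)$ the eigenvalues are \emph{not} independent --- the Schur/character orthogonality is a substitute for independence only for $k$ bounded by $N$, and beyond that the determinantal correlations kick in. One would need either (a) a coupling/approximation of $1/\Lambda_A$ on the relevant scales by an Euler-product-like object with genuinely independent factors (in the spirit of the CUE--random-Euler-product heuristics of Conrey--Gamburd, Gonek--Hughes--Keating), with error small enough for $k$ up to $BN$; or (b) a direct Harper-style analysis of the determinantal point process, which seems to require new input about the low moments $\int_{U(N)} \abs{\Lambda_A(x)}^{2q}$ for small $q$ uniformly in $\abs{x}\to 1$ at the scale $1 - \abs{x} \asymp 1/N$. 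I expect (a) is the more promising route, and that the whole argument goes through for $B$ arbitrary but with constants blowing up as $B\to\infty$, paralleling the $A$-dependence in Theorem~\ref{thm: ratio}; making the dependence on $B$ explicit, and recovering the sharp $(\log N)^{-1/4}$, would be the technically heaviest part.
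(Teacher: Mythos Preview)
The paper does not prove Conjecture~\ref{CNJ:RMT}; it explicitly states ``We leave this interesting challenge, Conjecture~\ref{CNJ:RMT}, open for now.'' So there is no proof in the paper to compare your proposal against. What you have written is not a proof but a program, and you yourself identify the gap: step~(3), transplanting Harper's critical-multiplicative-chaos mechanism to $U(N)$, is exactly the missing ingredient, and nothing in your outline closes it. The ratios formula on $U(N)$ is indeed a theorem rather than a conjecture, so step~(2) is on firmer ground than in the arithmetic setting, but this only handles the second-moment input; the genuine content of Harper's argument is the low-moment analysis of the resulting random Euler product, and for $k>N$ the determinantal correlations among eigenvalues prevent a direct reduction to independent factors. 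Your routes~(a) and~(b) are reasonable suggestions, but neither is carried out, and the paper offers no further help --- it poses the conjecture precisely because this step is open.

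In short: your proposal is a plausible heuristic sketch consistent with the paper's philosophy, but the statement is an open conjecture in the paper, and your outline does not constitute a proof.
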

Here $k$ is analogous to $\log{x}$ in Conjecture~\ref{conj: harper}, and $N$ is analogous to $\log{r}$.
Note that the range of $B$ is unrestricted, unlike in \cite{NPS}, which studies $\bigwedge^k\C^N$ instead of $\operatorname{Sym}^k\C^N$.
The connection to M\"{o}bius, or $1/L(s,\chi)$, is roughly given by the identity
\begin{equation*}
\frac{1}{\det(1 - e^{1/2-s}A)}
= \sum_{k\ge 0} \tr(A,\operatorname{Sym}^k\C^N) e^{(1/2-s)k}.
\end{equation*}
Also, note that $\int_{U(N)} \abs{\tr(A,\operatorname{Sym}^k\C^N)}^2\, d\nu_N(A) = 1$, since $\operatorname{Sym}^k\C^N$ is irreducible.
So the $o(1)$ in Conjecture~\ref{CNJ:RMT} is the analog of $o(\sqrt{x})$ in Conjecture~\ref{conj: harper}.

We leave this interesting challenge, Conjecture~\ref{CNJ:RMT}, open for now.
A similar conjecture may hold in the symplectic case $A\in USp(2N)$,
or in the orthogonal case $A\in O(N,\R)$ after replacing $\operatorname{Sym}^k\C^N$ with the kernel of the contraction map $\operatorname{Sym}^k\C^N \to \operatorname{Sym}^{k-2}\C^N$.

\subsection*{Strategy}

We sketch the proof ideas.
Our proof is based on Harper's strategy in \cite{harpertypical}.
Roughly speaking, Harper made a remarkable randomization argument to show that when summing over the given family of Dirichlet characters, the typical behavior of the character sums $\sum_{n} \chi(n)$ is very close to the random sums of Steinhaus Random multiplicative functions (RMF) $\sum_{n}f(n)$. The latter is well studied in Harper's earlier paper \cite{HarperLow} and in particular, a conjecture of Helson \cite{Helson} is proved:
\[\E |\sum_{1\le n \le x}f(n) | = o(\sqrt{x}),\]
which shows the ``better than square-root" cancellation phenomenon holds.\footnote{See \cite{Xu, caich2024random, NPS, guzhang} for further developments about this phenomenon and its universality.}
Once the link between the character sums and partial sums of RMF is built, the rest of the proof in \cite{harpertypical} closely follows the work \cite{HarperLow} in the RMF setting.

A key obstruction in Harper's strategy above, that prevented him from establishing Conjecture~\ref{conj: harper} for larger
$x$ is that, in order to show that the character sums typically behave like the random sums of RMF, Harper crucially used the perfect orthogonality of the character sums.
To make use of this, there is a natural restriction on the ranges of the parameters, e.g.~the relation between the moduli $r$ and the length of the sum $x$.
Our innovation is to use a more complex-analytic approach, namely by assuming the Ratios Conjecture to avoid using the orthogonality and thus we can extend the range of the key parameters to establish Conjecture~\ref{conj: harper} conditionally.

However, certain auxiliary estimates and parameters in our work are more delicate than those in Harper's paper \cite{harpertypical}.
Some of the changes required are listed before Proposition~\ref{prop: key} below.
A key ingredient is to replace \emph{perfect} orthogonality with \emph{approximate} orthogonality on average.
This is achieved in a new even-moment estimate, \eqref{INEQ:new-even}, which has a genuine error term, unlike in Harper's work.
To establish this estimate, we must carefully distinguish between $c(n)\chi(n)$ and $\chi(n)$.\footnote{Indeed, \eqref{INEQ:new-even} would be false if we had $c=1$ instead of $c\in \{\mu,\lambda\}$.}
We remark that a main different feature of the two cases is that, in the unweighted case $\sum_{1\le n \le x} \chi(n)$, one needs $r/x \to +\infty$ (see \cite[Theorems~1 and~2]{harpertypical}) in order to get extra cancellation due to the ``Fourier flip"; while in the weighted case $\sum_{1\le n \le x} c(n) \chi(n)$ with $c\in \{\mu,\lambda\}$, we do not expect such a restriction to be required.
The Ratios Conjecture ultimately provides a source of randomness
for the weighted case
but not present for the unweighted case.

Our work can be interpreted as saying that
a \emph{random matrix model} for $\lambda(n)\chi(n)$, namely the Ratios Conjecture,
justifies the \emph{Steinhaus model} for $\lambda(n)\chi(n)$,
for low-moment upper-bound statistics of the partial sums $\sum_{1\le n\le x} \lambda(n)\chi(n)$.

\subsection*{Notation}

Our notation $\ll,\gg,\asymp,O(\cdot),o(\cdot),\floor{\cdot}$ is standard.
We let $P(n)$ denote the maximum prime divisor of $n$.
We let $\bm{1}_E$ denote $1$ if an event $E$ holds, and $0$ otherwise.
The only other convention that deserves comment is the \emph{Steinhaus average} $\E_f A(f)$, or $\E A(f)$ for short, which we always write with the lowercase letter $f$.
Here $f$ denotes a completely multiplicative function.

Usually, $A(f)$ depends only on $f(p)$ for finitely many primes $p$.
In this case, $\E_f A(f)$ denotes the expected value of $A(f)$ when each $f(p)$ is drawn uniformly, independently, from $\{z\in \C: \abs{z}=1\}$.
This is the only case used in Harper's work \cite{harpertypical}.

However, in \S~\ref{SEC:ratios-randomize}, we will allow $A(f)$ to be a Dirichlet series of the form
$$A(f,\bm{z},\bm{s})
= \sum_{(\bm{m},\bm{n})\in \N^k \times \N^l} \frac{a_{\bm{m},\bm{n}}(f)}
{m_1^{z_1}\cdots m_k^{z_k}n_1^{s_1}\cdots n_l^{s_l}},$$
where $a_{\bm{m},\bm{n}}(f)$ depends only on $(\bm{m},\bm{n})$
and on $f(p)$
for $p\mid m_1\cdots m_kn_1\cdots n_l$.
Here $k,l\ge 0$, and $(\bm{z},\bm{s})\in \C^k \times \C^l$.
Then we let
\begin{equation*}
\E_f A(f,\bm{z},\bm{s})
\defeq \sum_{(\bm{m},\bm{n})\in \N^k \times \N^l} \frac{\E_f a_{\bm{m},\bm{n}}(f)}
{m_1^{z_1}\cdots m_k^{z_k}n_1^{s_1}\cdots n_l^{s_l}}.
\end{equation*}
We note that the series $\E_f A(f,\bm{z},\bm{s})$ may converge absolutely
in a strictly larger domain than the series $A(f,\bm{z},\bm{s})$ does for individual $f$.

\subsection*{Acknowledgements}

We thank Paul Bourgade and Kannan Soundararajan for discussions on random matrices and probability,
Alexandra Florea for helpful comments on the Ratios Conjecture,
and Joni Ter\"av\"ainen for providing several references.
We are also grateful to Alexandra Florea, Adam Harper, Joni Ter\"av\"ainen,
and the referee
for helpful comments on earlier drafts.
The first author is supported by the European Union's Horizon~2020 research and innovation program under the Marie Sk\l{}odowska-Curie Grant Agreement No.~101034413.
The second author is supported by a Simons Junior Fellowship from Simons Foundation. 

\section{Character twists behave like random model}
\label{SEC:ratios-randomize}

In this section, we establish that certain mean values of twists behave like a random model (Proposition~\ref{prop: key}).
Instead of using perfect orthogonality, we use the Ratios Conjecture to break through the barrier for the length of the twisted sum.
In order for the Ratios Conjecture to make sense,
we assume GRH for $\mathcal{F}_r$ throughout this section.
(The family $\mathcal{F}_r$ was defined before Theorem~\ref{thm: ratio}.)
Define $\E_{\chi\ne \chi_0} \defeq \frac{1}{r-2} \sum_{\chi\ne \chi_0}$.
It will be sometimes convenient to exclude $\chi_0$ from statements, and sometimes convenient to include it, so both $\E_{\chi\ne \chi_0}$ and $\E_\chi = \frac{1}{r-1} \sum_\chi$ will appear in our work below.

For every completely multiplicative function $f(n)$,
we let $f^\ast(n)\defeq f(n)\cdot \bm{1}_{\gcd(n,r)=1}$
(which is easily checked to be completely multiplicative),
and we formally let
\begin{equation*}
\begin{split}
L^\ast(s,f) &\defeq \sum_{n\ge 1} f^\ast(n) n^{-s}
= \prod_p (1 - f^\ast(p)p^{-s})^{-1}
= \prod_{p\ne r} (1 - f(p)p^{-s})^{-1}, \\
\sum_{n\ge 1} \mu^\ast_f(n) n^{-s}
&\defeq 1/L^\ast(s,f) = \prod_p (1 - f^\ast(p)p^{-s})
= \prod_{p\ne r} (1 - f(p)p^{-s}).
\end{split}
\end{equation*}
Since $f^\ast$ is completely multiplicative, we have $\mu^\ast_f(n) = \mu(n)f^\ast(n)$.
By definition, $\E_f \frac{L^\ast(z_1,f)L^\ast(z_2,\overline{f})}
{L^\ast(s_1,f)L^\ast(s_2,\overline{f})}$ is the meromorphic function obtained by formally expanding
\begin{equation*}
\frac{L^\ast(z_1,f)L^\ast(z_2,\overline{f})}
{L^\ast(s_1,f)L^\ast(s_2,\overline{f})}
= \sum_{m_1,m_2,n_1,n_2\ge 1} \frac{f^\ast(m_1)\ol{f}^\ast(m_2)\mu^\ast_f(n_1)\ol{\mu}^\ast_f(n_2)}
{m_1^{z_1}m_2^{z_2}n_1^{s_1}n_2^{s_2}}
\end{equation*}
as a $4$-variable Dirichlet series, and applying the average $\E_f$ to each coefficient.
Using notation analogous to $G_\zeta(\alpha;\beta;\gamma;\delta)$ from \cite[(5.10)]{conrey2008autocorrelation}, we have
\begin{equation}
\label{Ef-vs-Gzeta}
\E_f \frac{L^\ast(z_1,f)L^\ast(z_2,\overline{f})}
{L^\ast(s_1,f)L^\ast(s_2,\overline{f})}
= G^\ast_\zeta(z_1-\tfrac12; z_2-\tfrac12; s_1-\tfrac12; s_2-\tfrac12),
\end{equation}
where
\begin{equation*}
G^\ast_\zeta(\alpha;\beta;\gamma;\delta)
\defeq \sum_{\substack{m_1,m_2,n_1,n_2\ge 1 \\ m_1n_1 = m_2n_2}}
\frac{\mu(n_1)\mu(n_2)\bm{1}_{r\nmid m_1m_2n_1n_2}}
{m_1^{1/2+\alpha}m_2^{1/2+\beta}n_1^{1/2+\gamma}n_2^{1/2+\delta}}.
\end{equation*}
(The variables $m_1,m_2,n_1,n_2$ in our notation
correspond respectively to
$m,n,h,j$ in the notation of \cite[(5.10)]{conrey2008autocorrelation}.
Also, the Euler factors of $G_\zeta$ and $G^\ast_\zeta$ match at all primes $p\ne r$.)
The series $G^\ast_\zeta$ converges absolutely on the region
$$\Re(\alpha),\Re(\beta),\Re(\gamma),\Re(\delta)>0,$$ for instance,
by a short calculation with the divisor bound.
However, $G^\ast_\zeta$ is meromorphic on a larger region, as is discussed in \cite{conrey2007applications,conrey2008autocorrelation} in great depth.

A special case of the Ratios Conjecture \cite[(5.6)]{conrey2008autocorrelation} for $L(s,\chi)$ is the following:
\begin{conjecture}
\label{CNJ:R22}
Let $\Re(z_1)=\Re(z_2) = \frac12$
and $\Re(s_1)=\Re(s_2)=\frac12+\ee$.
Let $T \defeq \max(\abs{\Im(z_1)},\abs{\Im(z_2)},\abs{\Im(s_1)},\abs{\Im(s_2)})$.
Then for some absolute constant $\omega\in (0,\frac12]$ (independent of $r$ and $\ee$), we have for all sufficiently small $\ee>0$
\begin{equation*}
\E_{\chi\ne \chi_0}
\frac{L(z_1,\chi)L(z_2,\overline{\chi})}{L(s_1,\chi)L(s_2,\overline{\chi})}
= \mathsf{MT}
+ \frac{O_\ee((1+T)^\ee)}{r^\omega},
\end{equation*}
where
\begin{equation*}
\mathsf{MT} \defeq
\E_f \frac{L^\ast(z_1,f)L^\ast(z_2,\overline{f})}
{L^\ast(s_1,f)L^\ast(s_2,\overline{f})}
+ \frac{H}{r^{z_1+z_2-1}}\,
\E_f \frac{L^\ast(1-z_2,f)L^\ast(1-z_1,\ol{f})}
{L^\ast(s_1,f)L^\ast(s_2,\overline{f})},
\end{equation*}
where $H \defeq \frac{(2\pi)^{z_1+z_2}}{\pi^2}\, \Gamma(1-z_1)\Gamma(1-z_2)
\frac{\sum_{a\in \{0,1\}}}{2} \sin(\frac\pi2 (1-z_1+a))\sin(\frac\pi2 (1-z_2+a))$.
\end{conjecture}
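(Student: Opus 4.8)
The plan is to obtain Conjecture~\ref{CNJ:R22} by specializing the general Ratios Conjecture recipe \cite[(5.6)]{conrey2008autocorrelation} to the $2$-over-$2$ ratio $\frac{L(z_1,\chi)L(z_2,\ol\chi)}{L(s_1,\chi)L(s_2,\ol\chi)}$, averaged over the family of all non-principal Dirichlet characters modulo the prime $r$, organized by parity $\chi(-1)=(-1)^a$ with $a\in\{0,1\}$. First I would run the recipe: replace each of the two numerator factors $L(z_1,\chi)$ and $L(z_2,\ol\chi)$ by its approximate functional equation, so that it contributes either its own Dirichlet series (the ``unswapped'' choice) or, via the functional equation $L(s,\psi)=\eta(\psi)\,r^{1/2-s}X_a(s)\,L(1-s,\ol\psi)$ with $\eta(\psi)=\tau(\psi)/(i^a\sqrt r)$ and $X_a$ the archimedean factor, the dual Dirichlet series multiplied by $\eta(\psi)\,r^{1/2-s}X_a(s)$ (the ``swapped'' choice); replace each denominator factor by the Dirichlet series $1/L(s,\psi)=\sum_n\mu_f(n)n^{-s}$ with $f=\psi$ (denominators are never swapped in the recipe). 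Then average the resulting fourfold Dirichlet series over $\chi\ne\chi_0$ and over the two parities, retaining only the diagonal of the orthogonality relation $\E_{\chi\ne\chi_0}\chi(u)\ol\chi(v)\approx\bm{1}_{u=v}$. Since $r$ is prime, every $\chi\ne\chi_0$ is primitive and the Euler factors at $r$ are trivial, so no extra arithmetic factor enters.

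Next I would match the two surviving pieces with the two summands of $\mathsf{MT}$. The ``unswapped/unswapped'' piece is the fourfold series restricted to $m_1n_1=m_2n_2$ with coefficient $\E_f[f(m_1)\ol f(m_2)\mu_f(n_1)\ol\mu_f(n_2)]$, which by the definition of $\E_f$ together with \eqref{Ef-vs-Gzeta} is exactly $\E_f\frac{L(z_1,f)L(z_2,\ol f)}{L(s_1,f)L(s_2,\ol f)}$. In the ``swapped/swapped'' piece both root numbers occur; since $r$ is prime one has $\tau(\chi)\tau(\ol\chi)=\chi(-1)r$, hence $\eta(\chi)\eta(\ol\chi)=1$ on each parity class, leaving the clean factor $r^{1-z_1-z_2}$ coming from $r^{1/2-z_1}r^{1/2-z_2}$, times the parity-symmetrized archimedean product $\tfrac12\sum_{a\in\{0,1\}}X_a(z_1)X_a(z_2)$; the latter collapses, after the duplication and reflection formulae for $\Gamma$, to precisely the constant $H$ of the statement, and what is left of the diagonal is $\sum_{m_1n_1=m_2n_2}$ with $z_i$ replaced by $1-z_i$ in the $m$-exponents, i.e.\ $\E_f\frac{L(1-z_2,f)L(1-z_1,\ol f)}{L(s_1,f)L(s_2,\ol f)}$. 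Together these two pieces form $\mathsf{MT}$.

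Finally I would show that everything discarded above contributes only $O_\ee((1+T)^\ee)\,r^{-\omega}$ for some absolute $\omega\in(0,\tfrac12]$. The two ``mixed'' (exactly-one-swapped) terms retain a single leftover Gauss sum $\tau(\chi)$ or $\tau(\ol\chi)$, so after the average over $\chi$ they become additively-twisted Dirichlet series carrying a factor $e^{2\pi i\,n\bar u/r}$ with no diagonal and a genuine power saving in $r$; and the off-diagonal part of the orthogonality relation ($u\equiv v\bmod r$ with $u\ne v$) in the unswapped and swapped pieces is similarly absorbed, using GRH to control the relevant $1/L$-type Dirichlet series on the line $\Re(s_i)=\tfrac12+\ee$. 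I expect this last step to be the real crux: the leading diagonal is forced by the formal recipe, but obtaining an error of the exact advertised shape---an absolute power saving $r^{-\omega}$ together with only mild $(1+T)^\ee$ growth in the shifts, rather than a soft $o(1)$---is precisely the part of the Ratios Conjecture that lies beyond the recipe and is not known unconditionally, which is why Conjecture~\ref{CNJ:R22} is recorded as a conjecture.
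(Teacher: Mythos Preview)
Your proposal is correct and follows essentially the same route as the paper's own derivation: run the Ratios Recipe for the family of non-principal $\chi\bmod r$, using the functional equation $L(s,\chi)=w_\chi X_\chi(s)L(1-s,\ol\chi)$ with $w_\chi w_{\ol\chi}=1$, so that the unswapped/unswapped and swapped/swapped pieces yield the two summands of $\mathsf{MT}$ while the mixed pieces are discarded by root-number cancellation. Two minor remarks: no $\Gamma$ duplication or reflection is needed, since the parity-averaged product $\tfrac12\sum_a X_a(z_1)X_a(z_2)$ already equals $H\cdot r^{1-z_1-z_2}$ on the nose; and, as you correctly conclude, the claimed error bound $O_\ee((1+T)^\ee)/r^\omega$ is precisely the conjectural input that the recipe does not supply.
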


\begin{proof}
[Derivation via the Ratios Recipe]
In addition to the general recipe \cite[(5.6)]{conrey2008autocorrelation},
see \cite[\S~4.3]{conrey2005integral} for some details on the family $\{L(s,\chi):\chi\ne \chi_0\}$, noting that $\chi\bmod{r}$ is primitive.
Write $\chi(-1) = (-1)^a$ with $a\in \{0,1\}$.
Let $\tau(\chi) \defeq \sum_{1\le x\le r} \chi(x) e^{2\pi ix/r}$.
It is known that $$L(s,\chi) = w_\chi X_\chi(s) L(1-s,\ol{\chi}),$$
where $w_\chi \defeq \frac{\tau(\chi)}{i^a r^{1/2}}$
and $X_\chi(s) \defeq 2^s \pi^{s-1} r^{1/2-s} \sin(\frac\pi2 (s+a)) \Gamma(1-s)$.
Moreover, by standard properties of Gauss sums, we have $w_{\ol{\chi}} = \ol{w_\chi} = w_\chi^{-1}$.
On multiplying out
\begin{equation*}
\frac{\Big(L(z_1,\chi) + w_\chi X_\chi(z_1) L(1-z_1,\ol{\chi})\Big)\,
\Big(L(z_2,\overline{\chi}) + w_{\ol{\chi}} X_{\ol{\chi}}(z_2) L(1-z_2,\chi)\Big)}
{L(s_1,\chi)L(s_2,\overline{\chi})}
\end{equation*}
and formally averaging over $\chi$,
as described in \cite[(5.6)]{conrey2008autocorrelation},
using the formula \cite[(4.3.4)]{conrey2005integral},
with cancellation over root numbers as in \cite[\S~4.3]{conrey2005integral},
we get the desired Ratios Conjecture.
\end{proof}

The power saving $r^\omega$ in Conjecture~\ref{CNJ:R22},
with an absolute constant $\omega>0$,
is a standard part of the Ratios Conjecture.
Square-root cancellation over families of $L$-functions was conjectured in \cite{conrey2005integral,conrey2008autocorrelation},
though for general families one can only expect a power saving \cite{diaconu2021third}.

The real parts of $z_1,z_2,s_1,s_2$ lie in the standard ranges of the Ratios Conjecture, as specified in \cite[(2.11b)]{conrey2007applications}.
The imaginary parts are trickier to compare with the literature.
To avoid smoothing issues, which are orthogonal to the main point of the paper, we have put $O_\ee((1+T)^\ee)$ in the error term of Conjecture~\ref{CNJ:R22}, which is consistent with the expected admissible set of vertical shifts in \cite[Conjecture~2]{bettin2020averages}.
Morally, $O((1+T)^{O(1)})$ (which is consistent with the more restricted set of vertical shifts in \cite[(2.11c)]{conrey2007applications}) should be enough to say something interesting, but might require smoothing the sums over $n\le x$ in Conjecture~\ref{conj: harper}.
This is similar to the situation of \cite[Theorem~1]{bettin2020averages},
where smoothing would substantially reduce the set of vertical shifts required.

Conjecture~\ref{CNJ:R22} is an asymptotic for $T\le r^{0.99\omega/\ee}$, for any $\ee>0$, whereas for Theorem~\ref{thm: ratio} it would actually suffice to have an asymptotic for $T\le r^{A/2+0.01}$.\footnote{We actually only really apply Conjecture~\ref{CNJ:R22} with $\ee \asymp \omega/A$.}
However, we do not wish to optimize the $T$-aspect at all in this paper.
Similarly, it could be interesting to weaken the saving $r^\omega$ required in the asymptotic,
along the lines of \cite[Conjecture~3.6~(R2)]{BGW2024forthcoming} for instance.
However, our assumption that $\omega$ is independent of $r$ and $\ee$ is very convenient.

The quantity $H=H(z_1,z_2)$ is independent of $r$, and can be bounded as follows.

\begin{lemma}
\label{LEM:H-bound}
The function $H$ is holomorphic for $\Re(z_1),\Re(z_2)<1$.
Moreover, if $\Re(z_1),\Re(z_2)\in \{\frac12-\delta,\frac12+\delta\}$ with $\delta\in [0,\frac12)$,
then $H\ll_\delta (1+\abs{\Im(z_1)})^\delta(1+\abs{\Im(z_2)})^\delta$.
\end{lemma}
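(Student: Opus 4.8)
The plan is to analyze the explicit formula
\[
H = \frac{(2\pi)^{z_1+z_2}}{\pi^2}\, \Gamma(1-z_1)\Gamma(1-z_2)\,
\frac{1}{2}\sum_{a\in\{0,1\}} \sin\!\Big(\tfrac\pi2(1-z_1+a)\Big)\sin\!\Big(\tfrac\pi2(1-z_2+a)\Big)
\]
factor by factor. First I would observe that the sum over $a\in\{0,1\}$ is, up to constants, a product-to-sum rearrangement: writing $u=\tfrac\pi2(1-z_1)$ and $v=\tfrac\pi2(1-z_2)$, the bracket is $\tfrac12[\sin u\sin v + \cos u\cos v] = \tfrac12\cos(u-v) = \tfrac12\cos\!\big(\tfrac\pi2(z_2-z_1)\big)$. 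Thus $H = \frac{(2\pi)^{z_1+z_2}}{2\pi^2}\,\Gamma(1-z_1)\Gamma(1-z_2)\cos\!\big(\tfrac\pi2(z_2-z_1)\big)$, which is manifestly entire in $(z_1,z_2)$ except for the poles of the Gamma factors; since $\Gamma(1-z_j)$ has poles only at $z_j\in\{1,2,3,\dots\}$, the function $H$ is holomorphic on $\Re(z_1),\Re(z_2)<1$, giving the first claim.

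For the quantitative bound, assume $\Re(z_j)=\tfrac12\pm\delta$ with $\delta\in[0,\tfrac12)$ and write $t_j=\Im(z_j)$. I would bound the three pieces separately. The exponential factor satisfies $\abs{(2\pi)^{z_1+z_2}} = (2\pi)^{\Re(z_1)+\Re(z_2)} \ll_\delta 1$. For the cosine, $\abs{\cos\!\big(\tfrac\pi2(z_2-z_1)\big)} \ll e^{\frac\pi2\abs{t_1-t_2}} \le e^{\frac\pi2\abs{t_1}}e^{\frac\pi2\abs{t_2}}$, so this grows exponentially in the $t_j$; the cancellation against the Gamma factors is therefore essential. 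For this I would invoke the standard refinement of Stirling's formula on vertical lines: for $\sigma$ in a fixed compact set and $\abs{t}\to\infty$, $\abs{\Gamma(\sigma+it)} \asymp_\sigma \abs{t}^{\sigma-1/2} e^{-\pi\abs{t}/2}$, and more uniformly $\abs{\Gamma(\sigma+it)} \ll_\sigma (1+\abs{t})^{\sigma-1/2} e^{-\pi\abs{t}/2}$ valid for all $t$ once $\sigma$ avoids the poles. Here $\Gamma(1-z_j) = \Gamma\big((1-\Re(z_j)) - it_j\big)$ with real part $1-\Re(z_j) = \tfrac12\mp\delta \in (0,1]$, so $\abs{\Gamma(1-z_j)} \ll_\delta (1+\abs{t_j})^{(\frac12\mp\delta)-\frac12} e^{-\pi\abs{t_j}/2} = (1+\abs{t_j})^{\mp\delta} e^{-\pi\abs{t_j}/2} \le (1+\abs{t_j})^{\delta} e^{-\pi\abs{t_j}/2}$, using $(1+\abs{t_j})^{-\delta}\le 1 \le (1+\abs{t_j})^{\delta}$. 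Multiplying the three bounds, the factors $e^{\frac\pi2\abs{t_j}}$ from the cosine cancel the factors $e^{-\pi\abs{t_j}/2}$ from the two Gamma functions exactly, leaving $H \ll_\delta (1+\abs{t_1})^{\delta}(1+\abs{t_2})^{\delta}$, as claimed.

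The only mild subtlety — and the part I expect to require slight care — is to have a version of the Stirling bound that is genuinely uniform for all imaginary parts (including bounded ones), not merely asymptotic as $\abs{t_j}\to\infty$; this is standard (it follows from the asymptotic together with continuity and the absence of zeros of $\Gamma$) but should be stated cleanly so that the constant depends only on $\delta$. A secondary point is that $\Re(z_j)$ is only constrained to two values $\tfrac12\pm\delta$, both in $(0,1)$ after applying $z_j\mapsto 1-z_j$, so the compact-set hypothesis in Stirling is satisfied with the implied constant depending on $\delta$ alone; one checks both sign choices give the same final exponent $\delta$ because of the inequality $(1+\abs{t_j})^{\pm\delta}\le(1+\abs{t_j})^{\delta}$ noted above. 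No orthogonality or analytic-continuation input beyond elementary complex analysis is needed, so this lemma is genuinely routine modulo the bookkeeping.
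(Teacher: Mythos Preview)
Your proof is correct and follows essentially the same approach as the paper: Stirling's bound for the Gamma factors, with the exponential decay $e^{-\pi|t_j|/2}$ cancelling the exponential growth from the trigonometric factors. The only cosmetic difference is that the paper bounds each $\sin(\tfrac\pi2(1-z_j+a))$ separately by $O_\delta(e^{\pi|t_j|/2})$, whereas you first collapse the sum over $a$ via the product-to-sum identity into the single factor $\cos(\tfrac{\pi}{2}(z_2-z_1))$; both lead to the same cancellation and the same final bound.
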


\begin{proof}
The first part is clear since $\Gamma$ is holomorphic away from $\Z\setminus \N$.
The second formula follows from Stirling's bound $\Gamma(s) \ll_\delta (1+\abs{\Im(s)})^{\Re(s)-1/2} e^{-\frac{\pi}{2}\abs{\Im(s)}}$, a consequence of \cite[(5.113)]{iwaniec2004analytic}, on $\Re(s)\in \{\frac12-\delta,\frac12+\delta\}$.
Note that $\sin(\frac\pi2 (s+a)) \ll e^{\frac{\pi}{2}\abs{\Im(s)}}$ for $a\in \{0,1\}$,
and the exponential growth here cancels out the exponential decay factor for $\Gamma(s)$.
\end{proof}

The usual Ratios Conjecture implies a \emph{twisted} Ratios Conjecture:
\begin{conjecture}
\label{CNJ:twisted-R2}
Let $\Re(s_1)=\Re(s_2)=\frac12+\ee$ and $1\le m_1,m_2\le r^\hbar$.
Let $T \defeq \max(\abs{\Im(s_1)},\abs{\Im(s_2)})$.
If $\ee,\hbar>0$ are sufficiently small, then for some absolute constant $\omega'\in (0,\frac12]$ (independent of $r,\ee,\hbar$), we have
\begin{equation*}
\E_{\chi\ne \chi_0}
\frac{\chi(m_1)\overline{\chi}(m_2)}{L(s_1,\chi)L(s_2,\overline{\chi})}
= \E_f \frac{f^\ast(m_1)\overline{f}^\ast(m_2)}{L^\ast(s_1,f)L^\ast(s_2,\overline{f})}
+ O_{\ee}((1+T)^\ee r^{-\omega'}).
\end{equation*}
\end{conjecture}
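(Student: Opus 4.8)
The plan is to deduce Conjecture~\ref{CNJ:twisted-R2} from Conjecture~\ref{CNJ:R22} by writing the characteristic function $\chi(m_1)\ol\chi(m_2)$ as a contour integral of $L(z_1,\chi)L(z_2,\ol\chi)$ over vertical lines, so that the twisted ratio becomes an average of the untwisted ratio against suitable kernels. Concretely, for $1\le m_1,m_2\le r^\hbar$, I would use Perron-type identities: for $\Re(z)$ large,
\begin{equation*}
\frac{1}{m_1^{z_1}} = \text{(coefficient extraction from } L(z_1,\chi)\text{)},
\end{equation*}
but more efficiently, one can recover $\chi(m_1)$ from $L(z_1,\chi)$ by a single residue. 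The cleanest route: fix integers $m_1,m_2$ and observe that $\chi(m_1)\ol\chi(m_2)/(L(s_1,\chi)L(s_2,\ol\chi))$ is, up to an explicit Euler-factor correction at primes dividing $m_1m_2$, a finite linear combination (with coefficients $\mu_f$-type and independent of $\chi$) of shifted ratios $L(z_1,\chi)L(z_2,\ol\chi)/(L(s_1,\chi)L(s_2,\ol\chi))$ integrated in $z_1,z_2$ — or, even more directly, extract the $n_1=m_1$, $n_2=m_2$ term. Actually the slickest identity is multiplicative: since $\chi(m_1) = \sum_{d_1\mid m_1}\mu_f$-expansion is awkward, I instead use that $1/L(s,\chi)$ has Dirichlet coefficients $\mu(n)\chi(n)$, so $\chi(m_1)/L(s_1,\chi)$ already appears as a partial object; one multiplies $L(s_1,\chi)^{-1}$ by the finite Euler product $\prod_{p\mid m_1}(1-\chi(p)p^{-s_1})^{-1}$ restricted appropriately. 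The technically honest version is to write $\chi(m_1) = \frac{1}{2\pi i}\oint L(z_1,\chi) K_{m_1}(z_1)\,dz_1$ with $K_{m_1}$ a fixed kernel picking out the $n=m_1$ coefficient (e.g.\ an average over a Perron truncation), move the contour to $\Re(z_1)=\tfrac12$, and similarly for $m_2$.

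First I would set up these two contour integrals with $\Re(z_1)=\Re(z_2)=\tfrac12$, so that
\begin{equation*}
\E_{\chi\ne\chi_0}\frac{\chi(m_1)\ol\chi(m_2)}{L(s_1,\chi)L(s_2,\ol\chi)}
= \frac{1}{(2\pi i)^2}\int\!\!\int K_{m_1}(z_1)\ol{K_{m_2}}(z_2)\,
\E_{\chi\ne\chi_0}\frac{L(z_1,\chi)L(z_2,\ol\chi)}{L(s_1,\chi)L(s_2,\ol\chi)}\,dz_1\,dz_2,
\end{equation*}
valid provided the kernels decay fast enough in $\Im(z_i)$ to justify interchanging the (finite) $\chi$-average with the integral and provided no pole of the integrand on $\Re(z_i)=\tfrac12$ is crossed. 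Then I substitute Conjecture~\ref{CNJ:R22}: the main term $\mathsf{MT}$ splits into the diagonal piece $\E_f\frac{L(z_1,f)L(z_2,\ol f)}{L(s_1,f)L(s_2,\ol f)}$, whose contour integral against the same kernels reproduces exactly $\E_f\frac{f(m_1)\ol f(m_2)}{L(s_1,f)L(s_2,\ol f)}$ (since the Steinhaus average commutes with the identical kernel manipulation — this is the whole point of matching the two recipes), plus the dual piece involving $H\, r^{1-z_1-z_2}\E_f\frac{L(1-z_2,f)L(1-z_1,\ol f)}{L(s_1,f)L(s_2,\ol f)}$. The error term $O_\ee((1+T_z)^\ee)r^{-\omega}$ integrates against the kernels to $O_{\ee,\hbar}((1+T)^\ee r^{-\omega})$ since $K_{m_i}$ has size $\ll m_i^{1/2}\le r^{\hbar/2}$ on the line and integrable tails, absorbing $r^{\hbar/2}$ into $r^{-\omega'}$ with $\omega' \defeq \omega - \hbar$ (taking $\hbar$ small).

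The crux is controlling the \emph{dual piece}: the factor $r^{1-z_1-z_2}$ on the line $\Re(z_1)=\Re(z_2)=\tfrac12$ has modulus exactly $1$, so it does not decay, and one must show its contribution, after integrating against $K_{m_1}(z_1)\ol{K_{m_2}}(z_2)$, is $O(r^{-\omega'})$ — i.e.\ that it is genuinely smaller than the main diagonal term. The mechanism should be that the dual integrand, as a function of $z_1$, is holomorphic in a right half-plane $\Re(z_1)>\tfrac12-c$ (the pole structure of $\E_f\frac{L(1-z_2,f)\cdots}{\cdots}$ and of $H$ sits elsewhere), while $K_{m_1}(z_1)m_1^{-z_1}$-type factors let us shift the $z_1$-contour to the right, where $r^{1-z_1}$ becomes $r^{1/2-\Re(z_1)}$-small; one similarly shifts $z_2$. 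This is precisely the ``Fourier flip''/orthogonality-replacement mechanism the introduction advertises, and making the contour shift legitimate — tracking poles of $H$ (Lemma~\ref{LEM:H-bound} gives holomorphy for $\Re(z_i)<1$ and polynomial bounds), of the $\E_f$ ratio, and of $K_{m_i}$, and checking the kernels still decay after shifting — is where the real work lies. A secondary obstacle is bookkeeping the dependence on $m_1,m_2$ uniformly for $m_i\le r^\hbar$: each contour shift costs a factor $m_i^{O(1)}\le r^{O(\hbar)}$, so one needs $\hbar$ small relative to $\omega$ and to the shift width $c$, yielding the stated absolute $\omega'\in(0,\tfrac12]$. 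Once the dual piece is shown to be $O_{\ee,\hbar}((1+T)^\ee r^{-\omega'})$, combining with the diagonal identification completes the proof.
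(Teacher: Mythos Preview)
Your approach is essentially the paper's: extract $\chi(m_1)\ol\chi(m_2)$ via a Mellin-type kernel against $L(z_1,\chi)L(z_2,\ol\chi)$, apply Conjecture~\ref{CNJ:R22} on $\Re(z_j)=\tfrac12$, identify the diagonal contribution with the Steinhaus average, and kill the dual term by shifting $z_j$ to the right so that $r^{1-z_1-z_2}$ decays. The paper makes the kernel explicit by taking $g_{m_j}(x)=g_0(x-m_j)$ for a bump $g_0$ supported on $(-\tfrac12,\tfrac12)$ with $g_0(0)=1$, whose Mellin transform satisfies $g^\vee_{m}(z)\ll_B m^{\Re(z)}\min(1,m/|\Im z|)^B$; the final shift is to $\Re(z_j)=\tfrac12+\tfrac1{50}$, yielding $r^{-1/25}$ from the dual and a total cost $\prod m_j^{O(1)}\le r^{4\hbar}$, exactly as you anticipate.

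One technical point does need correction. Your assertion that the dual integrand is holomorphic in a half-plane $\Re(z_1)>\tfrac12-c$ is false as stated: both the diagonal piece $G_\zeta(z_1-\tfrac12;z_2-\tfrac12;\cdots)$ and the dual piece $G_\zeta(\tfrac12-z_2;\tfrac12-z_1;\cdots)$ individually carry a simple pole along $z_1+z_2=1$ (from $\zeta(z_1+z_2)$ and $\zeta(2-z_1-z_2)$ in the respective $Y_U$ factors), and this set meets $\Re(z_1)=\Re(z_2)=\tfrac12$. It is only their \emph{sum} $\mathsf{MT}$ that is holomorphic on a neighbourhood of that line, by the familiar cancellation of poles. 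Consequently you cannot split $\mathsf{MT}$ into diagonal and dual on $\Re(z_j)=\tfrac12$ and treat them separately there. The paper's fix is to shift the \emph{entire} $\mathsf{MT}$ from $\Re(z_j)=\tfrac12$ to $\Re(z_j)=\tfrac12+\tfrac1{50}$ first (valid since $\mathsf{MT}$ is holomorphic on $\Re(z_j)\in(\tfrac14,\tfrac34)$), and only then separate the two pieces: on the shifted line the diagonal can be pushed further to $\Re(z_j)=2$ to recover $\E_f\frac{f(m_1)\ol f(m_2)}{L(s_1,f)L(s_2,\ol f)}$ via \eqref{f-extract}, while the dual is bounded directly. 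Your saving mechanism is right, but the contour shift must be routed through $\mathsf{MT}$ as a whole.
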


\begin{proof}
[Proof assuming the Ratios Conjecture~\ref{CNJ:R22}]
We first discuss the convergence regions for various quantities that appear in the proof, as preparation for subsequent contour-shifting arguments.
By \eqref{Ef-vs-Gzeta}, we have
\begin{equation}
\label{fourier-flip-term}
\E_f \frac{L^\ast(1-z_2,f)L^\ast(1-z_1,\ol{f})}
{L^\ast(s_1,f)L^\ast(s_2,\overline{f})}
= G^\ast_\zeta(\tfrac12-z_2; \tfrac12-z_1; s_1-\tfrac12; s_2-\tfrac12).
\end{equation}
However, in notation analogous to \cite[(5.12)--(5.14)]{conrey2008autocorrelation},
we have $G^\ast_\zeta(\alpha;\beta;\gamma;\delta) = A^\ast_\zeta\, Y^\ast_U$,
where $A^\ast_\zeta=A^\ast_\zeta(\alpha;\beta;\gamma;\delta)$ is an explicit Euler product known to be absolutely convergent on the region $\abs{\Re(\alpha)},\dots,\abs{\Re(\delta)} < \frac14$ (see \cite[Remark~2.3]{conrey2007applications}),
and where
\begin{equation}
\label{polar-factor-YU}
Y^\ast_U=Y^\ast_U(\alpha;\beta;\gamma;\delta)
\defeq \frac{L(1+\alpha+\beta,\chi_0) L(1+\gamma+\delta,\chi_0)}
{L(1+\alpha+\delta,\chi_0) L(1+\beta+\gamma,\chi_0)}.
\end{equation}
Since $\Re(s_1),\Re(s_2)>\frac12$, it follows that
the main term of Conjecture~\ref{CNJ:R22},
\begin{equation*}
\mathsf{MT} = G^\ast_\zeta(z_1-\tfrac12; z_2-\tfrac12; s_1-\tfrac12; s_2-\tfrac12)
+ \frac{H}{r^{z_1+z_2-1}}\,
G^\ast_\zeta(\tfrac12-z_2; \tfrac12-z_1; s_1-\tfrac12; s_2-\tfrac12),
\end{equation*}
is holomorphic for $\Re(z_1),\Re(z_2) \in (\frac14, \frac34)$,
and that $\E_f \frac{L^\ast(z_1,f)L^\ast(z_2,\overline{f})}{L^\ast(s_1,f)L^\ast(s_2,\overline{f})}$ itself
is holomorphic for $\Re(z_1),\Re(z_2) > \frac12$.
Moreover, these holomorphic functions have polynomial growth in vertical strips.
For $\mathsf{MT}$, the previous two sentences follow from
the integral representation in \cite[\S~6.4, Lemma~6.7]{conrey2008autocorrelation}.
We emphasize that in $\mathsf{MT}$, there is a now-familiar cancellation of poles,
first observed by \cite[\S~2.5]{conrey2005integral} for moments
and by \cite{conrey2007applications,conrey2008autocorrelation} for ratios.

We now do a contour integral to extract $m_1^{-z_1} m_2^{-z_2}$ terms.
Let $g_0(x)$ be a bump function supported on $(-\frac12,\frac12)$, with $g_0(0)=1$.
For each $m\in \N$, let $g_m(x) \defeq g_0(x-m)$.
The Mellin transform $g^\vee_m(z) \defeq \int_0^\infty g_m(x) x^{z-1}\, dx$ satisfies the bound $g^\vee_m(z) \ll_B m^{\Re(z)} \min(1, m/\abs{\Im(z)})^B$, by repeated integration by parts in $x$ if $m/\abs{\Im(z)} \le 1$.
By Mellin inversion, we have
\begin{equation}
\label{f-extract}
\iint_{2-i\infty}^{2+i\infty}
\frac{\prod_{j=1}^{2} g^\vee_{m_j}(z_j)}{(2\pi i)^2}
\E_f \frac{L^\ast(z_1,f)L^\ast(z_2,\overline{f})}
{L^\ast(s_1,f)L^\ast(s_2,\overline{f})} \, dz_1\, dz_2
= \E_f \frac{f^\ast(m_1)\overline{f}^\ast(m_2)}{L^\ast(s_1,f)L^\ast(s_2,\overline{f})},
\end{equation}
with both sides equal to the $m_1^{-z_1}m_2^{-z_2}$ coefficient of
$\E_f \frac{L^\ast(z_1,f)L^\ast(z_2,\overline{f})}{L^\ast(s_1,f)L^\ast(s_2,\overline{f})}$.
Indeed, \eqref{f-extract} is an identity of holomorphic functions on $\Re(s_1),\Re(s_2)>\frac12$, and can thus be checked for $\Re(s_1),\Re(s_2)\ge 2$, say, where all convergence issues become trivial.

The deterministic side requires more care, due to the shape of $\mathsf{MT}$.
By Mellin inversion,
\begin{equation*}
\E_{\chi\ne \chi_0}
\frac{\chi(m_1)\overline{\chi}(m_2)}{L(s_1,\chi)L(s_2,\overline{\chi})}
= \E_{\chi\ne \chi_0} \iint_{2-i\infty}^{2+i\infty}
\frac{\prod_{j=1}^{2} g^\vee_{m_j}(z_j)}{(2\pi i)^2}
\frac{L(z_1,\chi)L(z_2,\overline{\chi})}
{L(s_1,\chi)L(s_2,\overline{\chi})} \, dz_1\, dz_2,
\end{equation*}
since $L(z,\chi)$ converges absolutely on $\Re(z)=2$,
and $g^\vee_m(z)$ decays rapidly in $\abs{\Im(z)}$.
To proceed, we move $\E_{\chi\ne \chi_0}$ inside the integral.
Next, we shift the integral first to $\Re(z_1)=\Re(z_2)=\frac12$,
to apply Conjecture~\ref{CNJ:R22},
then shift the main term $\mathsf{MT}$ to $\Re(z_1)=\Re(z_2)=\frac12+\frac1{50}$, say (this is possible by the first paragraph of the proof). 
This leads to the estimate
\begin{equation}
\begin{split}
\label{INEQ:main-twisted-estimate}
&\E_{\chi\ne \chi_0}
\frac{\chi(m_1)\overline{\chi}(m_2)}{L(s_1,\chi)L(s_2,\overline{\chi})}
- \iint_{2-i\infty}^{2+i\infty}
\frac{\prod_{j=1}^{2} g^\vee_{m_j}(z_j)}{(2\pi i)^2}
\E_f \frac{L^\ast(z_1,f)L^\ast(z_2,\overline{f})}
{L^\ast(s_1,f)L^\ast(s_2,\overline{f})} \, dz_1\, dz_2 \\
&\ll_{B,\ee} \iint_{\frac12+\frac1{50}-i\infty}^{\frac12+\frac1{50}+i\infty}
\frac{(1+T)^\ee
\prod_{j=1}^{2} (1+\abs{\Im(z_j)})^{\frac{1}{50}+\ee}
m_j^{\Re(z_j)} \min(1, \tfrac{m_j}{\abs{\Im(z_j)}})^B
\, \abs{dz_j}}
{\min(r^\omega,r^{\Re(z_1)+\Re(z_2)-1})}
\end{split}
\end{equation}
by \eqref{fourier-flip-term} and \eqref{polar-factor-YU},
because for $\Re(z_1)=\Re(z_2)=\frac12+\frac1{50}$, we have
(by Lemma~\ref{LEM:H-bound})
$$H\ll (1+\abs{\Im(z_1)})^{\frac{1}{50}}(1+\abs{\Im(z_2)})^{\frac{1}{50}}$$ 
and
(by using RH and the estimate $\abs{1-r^{-s}}\asymp 1$ near the $1$-line
to bound the $L(s,\chi_0)^{\pm 1}$ factors in $Y^\ast_U$)
\begin{equation*}
A^\ast_\zeta Y^\ast_U(\tfrac12-z_2; \tfrac12-z_1; s_1-\tfrac12; s_2-\tfrac12)
\ll_\ee (1+\abs{\Im(z_1)})^\ee(1+\abs{\Im(z_2)})^\ee
(1+T)^\ee.
\end{equation*}
We note that the right-hand side of \eqref{INEQ:main-twisted-estimate} takes into account both the error term $(1+T)^\ee/r^\omega$ from Conjecture~\ref{CNJ:R22}, and the second (or ``dual'') term in $\mathsf{MT}$.

Taking $B=2$, and integrating over $z_j$ (the dominant contribution coming from $\abs{\Im(z_j)} \asymp m_j$), we find that the right-hand side of the inequality \eqref{INEQ:main-twisted-estimate} is
\begin{equation*}
\ll \frac{(1+T)^\ee}{\min(r^\omega,r^{\frac1{50}+\frac1{50}})}
\prod_{1\le j\le 2} m_j^{1+\frac1{50}+\ee+\frac12+\frac1{50}}
\le \frac{(1+T)^\ee r^{4\hbar}}{\min(r^\omega,r^{\frac1{25}})}
\le \frac{(1+T)^\ee}{r^{\omega'}},
\end{equation*}
for suitable $\omega'>0$, provided $\ee$ and $\hbar$ are sufficiently small.
Conjecture~\ref{CNJ:twisted-R2} follows, upon plugging \eqref{f-extract} into \eqref{INEQ:main-twisted-estimate}.
\end{proof}

Next, we pass to a version for $\lambda$.
Let $L^\flat(s,\psi) \defeq L^\ast(s,\psi)/L^\ast(2s,\psi^2)$.
By an Euler product calculation,
$1/L^\flat(s,\psi) = \sum_{n\ge 1} n^{-s}\lambda(n)\psi(n)\bm{1}_{\gcd(n,r)=1}$
for any completely multiplicative function $\psi$.
Moreover, for characters $\chi\bmod{r}$, we have $L^\flat(s,\chi) = L(s,\chi)/L(2s,\chi^2)$.
Assume Conjecture~\ref{CNJ:twisted-R2} for the rest of the section.

\begin{proposition}
\label{PROP:lambda-twisted-R2}
Assume Conjecture~\ref{CNJ:twisted-R2}.
Let $\Re(s_1)=\Re(s_2)=\frac12+\ee$ and $1\le m_1,m_2\le r^\hbar$.
Let $T \defeq \max(\abs{\Im(s_1)},\abs{\Im(s_2)})$.
If $\ee,\hbar>0$ are sufficiently small, then for some absolute constant $\eta \in (0,\omega']$ (independent of $r,\ee,\hbar$), we have
\begin{equation*}
\E_{\chi\ne \chi_0} \frac{\chi(m_1)\overline{\chi}(m_2)}{L^\flat(s_1,\chi)L^\flat(s_2,\overline{\chi})}
= \E_f \frac{f^\ast(m_1)\overline{f}^\ast(m_2)}{L^\flat(s_1,f)L^\flat(s_2,\overline{f})}
+ O_\ee((1+T)^\ee r^{-\eta}).
\end{equation*}
\end{proposition}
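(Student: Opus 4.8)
The plan is to reduce to Conjecture~\ref{CNJ:twisted-R2} by unfolding the factor $L(2s,\psi^2)$ in the identity $1/L^\flat(s,\psi)=L(2s,\psi^2)/L(s,\psi)$. Since $\Re(2s_1)=\Re(2s_2)=1+2\ee>1$, the Dirichlet series $L(2s_1,\chi^2)=\sum_{a\ge1}\chi(a^2)a^{-2s_1}$ and $L(2s_2,\overline{\chi}^2)=\sum_{b\ge1}\overline{\chi}(b^2)b^{-2s_2}$ converge absolutely, are $\ll_\ee 1$ uniformly in $\chi$ and nonvanishing, and have tails $\sum_{a>A}\abs{a^{-2s_1}}\ll_\ee A^{-2\ee}$; in particular $L^\flat(s_i,\chi)\ne 0$ under GRH, so the left-hand side is well defined. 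Fix an admissible value $\hbar_1$ of the parameter $\hbar$ occurring in Conjecture~\ref{CNJ:twisted-R2}, set $A\defeq r^{\hbar_1/3}$, and require the $\hbar$ of the present Proposition to satisfy $\hbar<\hbar_1/3$, so that $m_ja_j^2\le r^{\hbar+2\hbar_1/3}<r^{\hbar_1}$ whenever $1\le m_j\le r^\hbar$ and $1\le a_j\le A$. Expanding the two $L(2s,\psi^2)$ factors on both the $\chi$-side and the $f$-side, splitting the resulting double sums over $(a,b)$ at $a,b\le A$, and moving the finite average $\E_{\chi\ne\chi_0}$ inside (legitimate since the series converge absolutely, using GRH to bound $1/\abs{L(s_i,\chi)}$), one reduces to applying Conjecture~\ref{CNJ:twisted-R2} to each $\E_{\chi\ne\chi_0}\frac{\chi(m_1a^2)\overline{\chi}(m_2b^2)}{L(s_1,\chi)L(s_2,\overline{\chi})}$ with $a,b\le A$, and then to estimating three leftover contributions: (i) the accumulated error terms of Conjecture~\ref{CNJ:twisted-R2}; (ii) the $f$-side tail; and (iii) the $\chi$-side tail.

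Contribution (i) is immediate: summing $O_{\ee,\hbar_1}((1+T)^\ee r^{-\omega'})$ against $\sum_{a,b\le A}\abs{a^{-2s_1}b^{-2s_2}}\le\zeta(1+2\ee)^2\ll_\ee 1$ gives $O_\ee((1+T)^\ee r^{-\omega'})$. For (ii) I would use the closed form, obtained (exactly as \eqref{f-extract} was justified, i.e.\ from the Steinhaus relation $\E_f f(Mn_1)\overline{f}(Nn_2)=\bm{1}_{Mn_1=Nn_2}$ and verified first on $\Re(s_i)\ge 2$)
\[\E_f\frac{f(M)\overline{f}(N)}{L(s_1,f)L(s_2,\overline{f})}=\sum_{\substack{n_1,n_2\ge1\\ Mn_1=Nn_2}}\frac{\mu(n_1)\mu(n_2)}{n_1^{s_1}n_2^{s_2}}=\frac{\mu(M')\,\mu(N')\,\zeta(s_1+s_2)}{(N')^{s_1}(M')^{s_2}\,\zeta(2s_1+2s_2)}\prod_{p\mid M'N'}\frac{1}{1+p^{-s_1-s_2}},\]
where $M'=M/\gcd(M,N)$ and $N'=N/\gcd(M,N)$; since $\abs{\zeta(s_1+s_2)}\ll_\ee\log(2+T)$ and the Euler product over $p\mid M'N'$ is $\ll_\delta(MN)^\delta$ elementarily, this is $\ll_{\ee,\delta}(1+T)^\delta(MN)^\delta$ for every $\delta>0$. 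The same unfolding identifies $\sum_{a,b\ge1}a^{-2s_1}b^{-2s_2}\,\E_f\frac{f(m_1a^2)\overline{f}(m_2b^2)}{L(s_1,f)L(s_2,\overline{f})}$ with $\E_f\frac{f(m_1)\overline{f}(m_2)}{L^\flat(s_1,f)L^\flat(s_2,\overline{f})}$, so the omitted tail ($a>A$ or $b>A$), taking $\delta=\ee/4$, is $\ll_\ee(1+T)^{\ee/4}(m_1m_2)^{\ee/4}A^{-3\ee/2}$, which — as $m_1m_2\le r^{2\hbar}$ with $\hbar<\hbar_1/3$ — is $\ll(1+T)^\ee r^{-\hbar_1\ee/3}$. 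For (iii) one estimates $\E_{\chi\ne\chi_0}$ of the absolute value of the $\chi$-side tail trivially: $\sum_{a>A}\abs{a^{-2s_1}}\ll_\ee A^{-2\ee}$ for the ``long'' variable and $\ll_\ee 1$ for the other, combined with the conditional bound $1/\abs{L(s_1,\chi)L(s_2,\overline{\chi})}\ll_\ee(r(1+T))^{o(1)}$ from GRH, uniform over $\chi\bmod r$; this gives $\ll_\ee (r(1+T))^{o(1)}A^{-2\ee}\ll(1+T)^\ee r^{-\hbar_1\ee/3}$ once $r$ is large (the Proposition being trivial when $(1+T)^\ee$ exceeds a large fixed power of $r$). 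Collecting (i)--(iii) yields the claim with, say, $\eta=\min(\omega',\hbar_1\ee/3)$, after requiring $\hbar<\hbar_1/3$ and $\ee$ below the threshold of Conjecture~\ref{CNJ:twisted-R2}.

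The one point demanding genuine care is the $\chi$-side tail (iii). It is the only place where GRH enters beyond Conjecture~\ref{CNJ:twisted-R2}, and one really needs the sharp conditional bound $1/L(\tfrac12+\ee+it,\chi)=r^{o(1)}$, rather than a power $r^{O(\ee)}$, uniformly over $\chi\bmod r$, so that the truncation saving $A^{-2\ee}$ is not overwhelmed. Moreover the cutoff $A$ is squeezed from both sides: it must be large enough for $A^{-2\ee}$ to beat that $1/L$-factor, yet small enough — namely $A^2r^\hbar<r^{\hbar_1}$ — for Conjecture~\ref{CNJ:twisted-R2} to apply term by term; this is what forces the Proposition's $\hbar$ to lie strictly below $\hbar_1$, and it is also why the final saving $\eta$ comes out proportional to $\ee$. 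Everything else — the interchanges of summation, the absolute convergence, and the evaluation of the $f$-average — is routine once carried out in the order used to justify \eqref{f-extract}.
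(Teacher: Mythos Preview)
Your argument is correct in outline, and steps (i) and (ii) are fine, but the $\eta$ you obtain is $\min(\omega',\hbar_1\ee/3)$, which depends on $\ee$ --- as you yourself note in the final paragraph. The Proposition, however, explicitly demands that $\eta$ be an absolute constant independent of $\ee$, and this is not a technicality: in the proof of Lemma~\ref{LEM:even-moment-estimate} one must choose $\ee$ small in terms of the exponent $A$ (to absorb errors of shape $r^{O_A(\ee)}x/r^{\eta/2}$) while keeping $\eta$ fixed. With $\eta\asymp\hbar_1\ee$ this would force $A\ll\hbar_1$, whereas the whole point of the paper is to reach arbitrarily large $A$ (Harper already has $A\le 1$ unconditionally).

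The weak spot is your tail estimate (iii), where you use only the trivial bound $\sum_{a>A}\abs{a^{-2s_1}}\ll_\ee A^{-2\ee}$. The paper instead exploits the fact that $\chi^2$ is a \emph{non-principal} character modulo $r$ (since $r$ is an odd prime and $\chi\ne\chi_0$), so GRH gives genuine cancellation: by partial summation, $\sum_{a>A}\chi^2(a)a^{-2s_j}\ll (r(1+T))^\kappa A^{-1/2}$ for any fixed $\kappa>0$. The saving $A^{-1/2}$ is independent of $\ee$, so one may fix $\eta$ (small in terms of $\omega'$) and the cutoff $\asymp r^{O(\eta)}$ first, and only afterwards take $\ee,\hbar,\kappa$ small in terms of $\eta$. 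With this single change to (iii), your argument goes through; your direct evaluation of the $f$-side via the closed form for $\E_f\,f(M)\overline{f}(N)/L(s_1,f)L(s_2,\overline{f})$ is a legitimate substitute for the paper's Cauchy--sequence telescoping, which is used only to identify the limit cleanly.
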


\begin{proof}

Let $\eta$ be small in terms of $\omega'$.
Let $\ee$ and $\hbar$ be small in terms of $\eta$.
Since $r$ is prime,
there is at most one character $\chi\ne \chi_0$
for which $\chi^2=\chi_0$.
Let $M\ge 1$ be a parameter.
Fix $\kappa>0$ small in terms of $\ee$.
If $\chi^2\ne \chi_0$, then by GRH, $\sum_{d\le N} \chi^2(d)d^{-2it_j} \ll_\ee (1+\abs{t_j})^\kappa r^\kappa \cdot N^{1/2+\kappa}$ for all $N\ge 1$, since the vertically shifted $L$-function $L(s+2t_j,\chi^2)$ has analytic conductor $\ll (1+\abs{t_j})\, r$ in the sense of \cite[Chapter~5]{iwaniec2004analytic}.
By partial summation over $d$, we conclude that
$L(2s_j,\chi^2) - \sum_{d\le M} \frac{\chi^2(d)}{d^{2s_j}}
= \sum_{d>M} \frac{\chi^2(d)}{d^{2s_j}}
\ll_\ee \frac{(1+T)^\kappa r^\kappa}{M^{\Re(2s_j)}}\cdot M^{1/2+\kappa}$ for $j\in \{1,2\}$.
Also, $\sum_{d\le M} \frac{\chi^2(d)}{d^{2s}} \ll_\ee 1$ trivially.
It follows that, under GRH, 
\begin{equation*}
L(2s_1,\chi^2)L(2s_2,\ol{\chi}^2)
- \sum_{d_1,d_2\le M} \frac{\chi(d_1^2)\overline{\chi}(d_2^2)}{d_1^{2s_1}d_2^{2s_2}}
\ll_\ee 1 \cdot \frac{(1+T)^\kappa r^\kappa}{M^{1/2}}
+ \left(\frac{(1+T)^\kappa r^\kappa}{M^{1/2}}\right)^2
\end{equation*}
if $\chi^2\ne \chi_0$,
whereas the left-hand side is trivially $\ll_\ee 1$ if $\chi^2=\chi_0$.
Multiplying both sides by $\chi(m_1)\overline{\chi}(m_2)L(s_1, \chi)^{-1} L(s_2, \ol{\chi})^{-1}$ and using the GRH bound $1/L(s,\chi) \ll_\ee (1+T)^\kappa r^\kappa$, we get
\begin{equation*}
\E_{\chi\ne \chi_0} \frac{\chi(m_1)\overline{\chi}(m_2)}{L^\flat(s_1,\chi)L^\flat(s_2,\overline{\chi})}
- \sum_{d_1,d_2\le M} \E_{\chi\ne \chi_0} \frac{\chi(d_1^2m_1)\overline{\chi}(d_2^2m_2)}{d_1^{2s_1}d_2^{2s_2}L(s_1,\chi)L(s_2,\overline{\chi})}
\ll_\ee \frac{(1+T)^{4\kappa} r^{4\kappa}}{\min(M^{1/2},r)},
\end{equation*}
where the $r$ in $\min(M^{1/2},r)$
comes from the bound $\E_{\chi\ne \chi_0} \bm{1}_{\chi^2=\chi_0} \ll 1/r$.
If $M\le r^{3\eta}$, then on plugging in Conjecture~\ref{CNJ:twisted-R2} (with $6\eta+\hbar$ in place of $\hbar$) and summing over $d_1,d_2\le M$, we get
\begin{equation}
\begin{split}
\label{INEQ:after-removing-large-square-moduli}
&\E_{\chi\ne \chi_0} \frac{\chi(m_1)\overline{\chi}(m_2)}{L^\flat(s_1,\chi)L^\flat(s_2,\overline{\chi})}
- \sum_{d_1,d_2\le M} \E_f \frac{f^\ast(d_1^2m_1)\overline{f}^\ast(d_2^2m_2)}{d_1^{2s_1}d_2^{2s_2}L^\ast(s_1,f)L^\ast(s_2,\overline{f})} \\
&\ll_\ee \frac{(1+T)^{4\kappa} r^{4\kappa}}{M^{1/2}}
+ \frac{(1+T)^\ee}{r^{\omega'}}
\ll \frac{(1+T)^\ee r^\ee}{M^{1/2}}.
\end{split}
\end{equation}

It remains to bound the tail contribution from $\max(d_1,d_2)>M$.
Since $1/L^\flat(s,f) = L^\ast(2s,f^2)/L^\ast(s,f)
= \sum_{d,n\ge 1} f^\ast(d^2)d^{-2s} f^\ast(n)\mu(n)n^{-s}$, we have
\begin{equation*}
\E_f \frac{f^\ast(m_1)\overline{f}^\ast(m_2)}{L^\flat(s_1,f)L^\flat(s_2,\overline{f})}
- \sum_{d_1,d_2\le M} \E_f \frac{f^\ast(d_1^2m_1)\overline{f}^\ast(d_2^2m_2)}{d_1^{2s_1}d_2^{2s_2}L^\ast(s_1,f)L^\ast(s_2,\overline{f})}
\ll \sum_{\substack{d_1,d_2,n_1,n_2\ge 1 \\ d_1^2m_1n_1 = d_2^2m_2n_2}}
\frac{\bm{1}_{\max(d_1,d_2)>M}}{\abs{d_1^{2s_1}d_2^{2s_2}n_1^{s_1}n_2^{s_2}}}
\end{equation*}
by perfect orthogonality over $f$.
By the divisor bound and the inequality $r^{-\hbar}\le m_1/m_2\le r^\hbar$
(which implies that $r^{-\hbar}\le d_1^2n_1/(d_2^2n_2)\le r^\hbar$),
the sum on the right is
\begin{equation*}
\ll_\ee \sum_{d>M} \sum_{n\ge 1}
\frac{(d^2r^\hbar n)^\ee}{(d^2n)^{1/2+\ee} (d^2n/r^\hbar)^{1/2+\ee}}
\ll_\ee \sum_{d>M} \frac{(d^2r^\hbar)^\ee}{(d^4/r^\hbar)^{1/2+\ee}}
\ll \frac{r^\hbar}{M}.
\end{equation*}
Taking $M=r^{3\eta}$ in \eqref{INEQ:after-removing-large-square-moduli},
and recalling that $\ee$ and $\hbar$ are small in terms of $\eta$,
completes the proof of Proposition~\ref{PROP:lambda-twisted-R2}.
\end{proof}

We now prove an unrestricted version of \cite[Lemma~1]{harpertypical}.
As we remarked in the introduction, one may think of \eqref{INEQ:new-even} as an approximate orthogonality relation on average over $n\le x$.
As part of the proof, we will replace $f^\ast$ with $f$, up to a small error term.
\begin{lemma}[Approximate orthogonality and even-moment estimate]
\label{LEM:even-moment-estimate}
Let $c\in \{\mu,\lambda\}$.
Let $\mathcal{P}$ be a finite nonempty set of primes.
Let $\mathcal{Q} \defeq \mathcal{P} \cup \{p^2: p\in \mathcal{P}\}$
and $U\defeq \max(\mathcal{Q})$.
Let $Q(f) \defeq \sum_{q\in \mathcal{Q}} q^{-1/2} a(q)f(q)$ where $a(q)$ are any complex numbers.
Let $k\in \N$.
Assume $U^k\le r^\ee$ and $1\le x\le r^A$, where $A>0$ is fixed.\footnote{We do not assume $xU^k<r$ like Harper does.
On the other hand, our definition of $Q(f)$ agrees with Harper's definition when $f=\chi$.}
Assume Conjecture~\ref{CNJ:twisted-R2}.
Let $\eta$ be as in the statement of Proposition~\ref{PROP:lambda-twisted-R2}.
Then
\begin{equation}\label{INEQ:new-even}
\E_\chi \chi(m_1)\ol{\chi}(m_2) \abs{\sum_{n\le x} c(n) \chi(n)}^2
- \E_f f(m_1)\ol{f}(m_2) \abs{\sum_{n\le x} c(n) f(n)}^2
\ll_A \frac{x}{r^{\eta/2}},
\end{equation}
uniformly for $1\le m_1,m_2\le U^k$, provided $\ee$ is small enough in terms of $A$.\footnote{Of course, we could write $1\le m_1,m_2\le r^\ee$ instead, and thus eliminate the role of $U^k$ in the statement.
However, we prefer to keep it, in order to highlight the connection to \cite[Lemma~1]{harpertypical}.}
Moreover,
\begin{equation}\label{INEQ:old-even}
\E_f \abs{Q(f)}^{2k} \abs{\sum_{n\le x} c(n) f(n)}^2
\ll \sum_{n\le x} \tilde{d}(n) \abs{c(n)}^2
\cdot (k!) \left(2\sum_{q\in \mathcal{Q}} q^{-1} v_q \abs{a(q)}^2\right)^{\!k},
\end{equation}
where $\tilde{d}(n)\defeq \sum_{d|n}\bm{1}_{p\mid d\Rightarrow p\in \mathcal{P}}$,
and $v_q \defeq \bm{1}_{q\in \mathcal{P}} + 6\cdot \bm{1}_{q\notin \mathcal{P}}$.
\end{lemma}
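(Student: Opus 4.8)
\textbf{Proof plan for Lemma~\ref{LEM:even-moment-estimate}.}

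The two estimates \eqref{INEQ:new-even} and \eqref{INEQ:old-even} are essentially independent, so I would handle them separately. For \eqref{INEQ:new-even}, the plan is to express $\abs{\sum_{n\le x} c(n)\chi(n)}^2$ as a double Dirichlet sum via a Perron-type contour integral. Concretely, since $c=\lambda$ gives $\sum_{n\ge 1} c(n)\psi(n) n^{-s} = 1/L^\flat(s,\psi)$ for completely multiplicative $\psi$, one writes
\begin{equation*}
\sum_{n\le x} c(n)\chi(n)
= \frac{1}{2\pi i} \int_{(\sigma)} \frac{1}{L^\flat(s,\chi)} \frac{x^s}{s}\, ds
\end{equation*}
(with a smoothed cutoff to avoid convergence issues, as the paper permits via the $O_\ee((1+T)^\ee)$ footnote), hence
\begin{equation*}
\abs{\sum_{n\le x} c(n)\chi(n)}^2
\approx \frac{1}{(2\pi i)^2} \iint \frac{1}{L^\flat(s_1,\chi)L^\flat(s_2,\overline\chi)} \frac{x^{s_1+s_2}}{s_1 s_2}\, ds_1\, ds_2
\end{equation*}
along lines $\Re(s_j) = \tfrac12+\ee$. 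Multiplying by $\chi(m_1)\overline\chi(m_2)$, taking $\E_{\chi\ne\chi_0}$, and invoking Proposition~\ref{PROP:lambda-twisted-R2} inside the integral replaces the character average by the Steinhaus average $\E_f$, with total error controlled by $\iint \frac{(1+T)^\ee r^{-\eta}}{\abs{s_1 s_2}} x^{\Re(s_1)+\Re(s_2)}\, \abs{ds_1}\,\abs{ds_2}$; since the contour is at height $\asymp$ (something like $x$ after optimizing the smoothing) and $x\le r^A$, choosing $\ee\ll_A 1$ makes the $(1+T)^\ee$ factor cost at most $r^{\eta/4}$, leaving $\ll x r^{-\eta/2}$ after accounting for the $x^{1+2\ee}$ from the integral. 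One must also check that swapping $\E_\chi$ for $\E_{\chi\ne\chi_0}$ (i.e.\ adding back $\chi_0$) and handling the $1\le m_j\le U^k\le r^\ee \le r^\hbar$ constraint are both harmless — the former contributes $O(x/r)$, and the latter is exactly the hypothesis range of Proposition~\ref{PROP:lambda-twisted-R2}.

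For \eqref{INEQ:old-even}, the plan is a purely combinatorial computation with the Steinhaus measure. Expand $\abs{Q(f)}^{2k} = \sum Q(f)^k \overline{Q(f)}^k$ into a sum over $(q_1,\dots,q_k)$ and $(q_1',\dots,q_k')$ in $\mcal{Q}^k$, and $\abs{\sum_{n\le x}c(n)f(n)}^2$ into a sum over pairs $(n,n')$ with $n,n'\le x$. Using $\E_f f(a)\overline{f}(b) = \bm{1}_{a=b}$ for the totally multiplicative Steinhaus $f$, the surviving terms are those with $q_1\cdots q_k\, n = q_1'\cdots q_k'\, n'$. I would then bound the number of ways to match up the $q_i$'s with the $q_i'$'s: writing each monomial in terms of primes of $\mathcal{P}$, the factor $v_q = \bm{1}_{q\in\mathcal{P}} + 6\cdot\bm{1}_{q\notin\mathcal{P}}$ and the $k!$ arise exactly as in \cite[Lemma~1]{harpertypical} — the $k!$ from choosing a pairing of the two $k$-tuples, the factor $2$ per term and the $6$ for squares $p^2$ absorbing the mismatches where a $q_i$ from the first tuple is a $p$ and the corresponding part of the second tuple is $p^2$ or split across two factors, using $U^k \le r^\ee$ only to ensure no collisions force $n$ or $n'$ out of range. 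The weight $\tilde d(n) = \sum_{d\mid n}\bm{1}_{p\mid n\Rightarrow p\in\mathcal P}$ counts the residual divisor freedom in matching the leftover prime-power part of $q_1\cdots q_k$ against $n$ versus $n'$, and $\abs{c(n)}^2$ tracks the coefficient. This step is standard and I expect it to follow Harper's argument almost verbatim.

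\textbf{Main obstacle.} The genuinely new difficulty is \eqref{INEQ:new-even}, specifically ensuring the error term really is $O_A(x r^{-\eta/2})$ uniformly over $1\le m_1,m_2\le U^k$, and not merely $o(x)$ or $O(x r^{-\eta'})$ with a worse exponent. The delicate points are: (i) the contour in the double integral must be truncated at height $T$ roughly of size a fixed power of $x$ (after smoothing), so that the accumulated $(1+T)^\ee$ from Proposition~\ref{PROP:lambda-twisted-R2} and the $x^{\Re(s_1)+\Re(s_2)} = x^{1+2\ee}$ growth together stay below $x r^{-\eta/2}$; since $x\le r^A$ this forces $\ee$ to be taken small in terms of $A$, which is exactly the hypothesis $\ee\ll_A 1$; (ii) one needs GRH to bound $1/L^\flat(s_j,\chi)$ on the $\Re(s_j)=\tfrac12+\ee$ line well enough that the tail of the (smoothed) Perron integral beyond height $T$ contributes $\ll x r^{-\eta}$, so that truncation is legitimate — this is where the footnote's smoothing of the sum over $n\le x$ becomes relevant and I would either smooth or insert a standard Perron error term. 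Compared to Harper, who had perfect orthogonality and hence an exact identity with no error term, here every step leaks a small power of $r$, and the whole point is that the Ratios Conjecture's saving $r^{-\omega}$ — propagated through Conjectures~\ref{CNJ:twisted-R2} and Proposition~\ref{PROP:lambda-twisted-R2} to $r^{-\eta}$ — is a fixed positive power independent of $\ee$, so it survives the $(1+T)^\ee$-type losses; I would make sure the bookkeeping of exponents ($\eta$ vs.\ $\eta/2$, $\ee$ vs.\ $\hbar$ vs.\ $\kappa$) is tight enough for this to go through.
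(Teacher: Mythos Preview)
Your overall strategy for \eqref{INEQ:new-even} --- Perron on the line $\Re(s_j)=\tfrac12+\ee$, then apply Proposition~\ref{PROP:lambda-twisted-R2} inside the double integral --- matches the paper's. However, you skip over a real step: after replacing $\E_{\chi\ne\chi_0}$ by $\E_f$ inside the integral, you still have $\iint \E_f\frac{f(m_1)\ol{f}(m_2)}{L^\flat(s_1,f)L^\flat(s_2,\ol{f})}\frac{x^{s_1+s_2}}{s_1 s_2}\,ds_1\,ds_2$, and you must identify this with $\E_f f(m_1)\ol{f}(m_2)\abs{\sum_{n\le x}c(n)f(n)}^2$. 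You cannot do this by ``Perron for each $f$ then average'', since $1/L^\flat(s,f)$ has no analytic continuation past $\Re(s)=1$ for individual Steinhaus $f$. The paper sidesteps this entirely with a neat \emph{subtraction trick}: it picks an auxiliary prime $r'>\max(xU^k,r)$, applies the same Perron--Ratios estimate \eqref{INEQ:after-perron-and-R2} with $r'$ in place of $r$, and then uses \emph{perfect} orthogonality at modulus $r'$ (valid because $r'>xU^k$) to identify $\E_{\chi\bmod r'}$ with $\E_f$ exactly. Subtracting the two estimates cancels the common integral and yields \eqref{INEQ:new-even} directly. Your route is salvageable --- one can apply a truncated double Perron to the averaged Dirichlet series $\sum_{m_1n_1=m_2n_2}\lambda(n_1)\lambda(n_2)n_1^{-s_1}n_2^{-s_2}$, which does converge absolutely for $\Re(s_j)>\tfrac12$ --- but you have not acknowledged that this second inversion is needed, and the paper's device is both shorter and avoids any smoothing.

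For \eqref{INEQ:old-even}, your combinatorial expansion would work, but the paper does something much quicker: since \eqref{INEQ:old-even} is a statement purely about the Steinhaus model and does not involve $r$, one may take $r$ arbitrarily large so that $r>xU^k$, at which point perfect orthogonality identifies $\E_f$ with $\E_\chi$ and the inequality is literally Harper's Lemma~1. (In particular, your remark that $U^k\le r^\ee$ is used ``to ensure no collisions force $n$ or $n'$ out of range'' is not right --- \eqref{INEQ:old-even} has no dependence on $r$ whatsoever.)
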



\begin{proof}
If $r > xU^k$, then the left-hand side of \eqref{INEQ:new-even} vanishes (unconditionally), by perfect orthogonality.
Therefore, by taking $r$ large enough, we see that \eqref{INEQ:old-even} is equivalent to \cite[Lemma~1]{harpertypical}.
It remains to prove \eqref{INEQ:new-even} in general, for $U^k\le r^\ee$ and $1\le x\le r^A$.

For convenience, let $L^c(s,\psi) \defeq L^\ast(s,\psi) \bm{1}_{c=\mu} + L^\flat(s,\psi) \bm{1}_{c=\lambda}$.
By Perron's formula in the form of \cite[Theorem~5.2 and Corollary~5.3]{MV2007}, we have
\begin{equation}
\label{perron}
\sum_{n\le x} c(n)\chi(n) - \int_{1+\ee-iT_0}^{1+\ee+iT_0}
\frac{1}{L^c(s,\chi)} \frac{y^s}{s} \frac{ds}{2\pi i}
\ll_\ee \frac{y\log{y}}{T_0} + \frac{(4+y)^{1+\ee}}{T_0}
\ll_\ee \frac{y^{1+\ee}}{T_0},
\end{equation}
where $y \defeq \floor{x}+0.5 \ge 1.5$.
But by contour shifting and GRH, we have
\begin{equation*}
\left(\int_{1+\ee-iT_0}^{1+\ee+iT_0}-\int_{\frac12+\ee-iT_0}^{\frac12+\ee+iT_0}\right) \frac{1}{L^c(s,\chi)} \frac{y^s}{s} \frac{ds}{2\pi i}
\ll_\ee (rT_0)^\ee \frac{y^{1+\ee}}{T_0}.
\end{equation*}
Since $\abs{\sum_{n\le x} c(n) \chi(n)} \ll_\ee r^\ee x^{0.5+\ee}$ by GRH, we thus have, for each $j\in \{0,1\}$,
\begin{equation*}
(-1)^j\sum_{n\le x} c(n)\chi(n)
+ \int_{\frac12+\ee-iT_0}^{\frac12+\ee+iT_0} \frac{1}{L^c(s,\chi)} \frac{y^s}{s} \frac{ds}{2\pi i}
\ll_\ee r^\ee x^{0.5+\ee}\bm{1}_{j=0} + (rT_0)^\ee \frac{y^{1+\ee}}{T_0}.
\end{equation*}
Let $T_0 \defeq x^{(0.5+\eta)/(1-\ee)}$.
Since $\abs{\abs{z}^2 - \abs{w}^2} \le \abs{z^2 - w^2} = \abs{z-w} \abs{z+w}$,
it follows that
\begin{equation*}
\abs{\sum_{n\le x} c(n)\chi(n)}^2
- \abs{\int_{\frac12+\ee-iT_0}^{\frac12+\ee+iT_0} \frac{1}{L^c(s,\chi)} \frac{y^s}{s} \frac{ds}{2\pi i}}^2
\ll_\ee (r^2T_0)^\ee \frac{x^{1.5+2\ee}}{T_0}.
\end{equation*}
Whenever $1\le m_1,m_2\le U^k$, we thus have
\begin{equation}
\begin{split}
\label{INEQ:after-perron-and-R2}
&\E_{\chi\ne \chi_0} \chi(m_1)\ol{\chi}(m_2) \abs{\sum_{n\le x} c(n) \chi(n)}^2
- \iint_{\frac12+\ee-iT_0}^{\frac12+\ee+iT_0} \E_f \frac{f^\ast(m_1)\ol{f}^\ast(m_2) y^{s_1}y^{\ol{s}_2}\,ds_1\,d\ol{s}_2}{L^c(s_1,f)L^c(\ol{s}_2,\ol{f}) s_1\ol{s}_2 \abs{2\pi i}^2} \\
&\ll_\ee (r^2T_0)^\ee \frac{x^{1.5+2\ee}}{T_0}
+ \iint_{\frac12+\ee-iT_0}^{\frac12+\ee+iT_0} \frac{(1+T_0)^\ee}{r^\eta}
y^{\Re(s_1+s_2)} \,\frac{\abs{ds_1\,ds_2}}{\abs{s_1s_2}},
\end{split}
\end{equation}
by Conjecture~\ref{CNJ:twisted-R2} if $c=\mu$,
or by Proposition~\ref{PROP:lambda-twisted-R2} if $c=\lambda$.
In each case, we take $\hbar\defeq \ee$.

Observe that
\begin{equation*}
\E_f \frac{f^\ast(m_1)\ol{f}^\ast(m_2)}{L^c(s_1,f)L^c(\ol{s}_2,\ol{f})}
= \sum_{\substack{n_1,n_2\ge 1 \\ m_1n_1=m_2n_2}}
\frac{c(n_1) c(n_2) \bm{1}_{r\nmid m_1m_2n_1n_2}}{n_1^{s_1} n_2^{\ol{s}_2}},
\end{equation*}
where the right-hand side is absolutely convergent for $\Re(s_1),\Re(s_2)>\frac12$
(allowing us to freely apply Fubini's theorem in the next few lines).
Assume $m_2\ge m_1$.
Then by Perron's formula twice
(again in the form of \cite[Theorem~5.2 and Corollary~5.3]{MV2007}),
first to approximate an integral over $s_2$ as a sum over $n_2\le x$,
and second to approximate an integral over $s_1$ as a sum over $n_1\le x$,
we get
\begin{equation*}
\begin{split}
&\iint_{\frac12+\ee-iT_0}^{\frac12+\ee+iT_0} \E_f \frac{f^\ast(m_1)\ol{f}^\ast(m_2) y^{s_1}y^{\ol{s}_2}\,ds_1\,d\ol{s}_2}{L^c(s_1,f)L^c(\ol{s}_2,\ol{f}) s_1\ol{s}_2 \abs{2\pi i}^2} \\
&= \iint_{\frac12+\ee-iT_0}^{\frac12+\ee+iT_0}
\sum_{n_2\ge 1}
\sum_{\substack{n_1\ge 1 \\ m_1n_1=m_2n_2}}
\frac{c(n_1) c(n_2) \bm{1}_{r\nmid m_1m_2n_1n_2}}{n_1^{s_1} n_2^{\ol{s}_2}}
\frac{y^{s_1}y^{\ol{s}_2}\,d\ol{s}_2\,ds_1}{s_1\ol{s}_2 \abs{2\pi i}^2} \\
&\approx \int_{\frac12+\ee-iT_0}^{\frac12+\ee+iT_0}
\sum_{n_2\le x}
\sum_{\substack{n_1\ge 1 \\ m_1n_1=m_2n_2}}
\frac{c(n_1) c(n_2) \bm{1}_{r\nmid m_1m_2n_1n_2}}{n_1^{s_1}}
\frac{y^{s_1}\,ds_1}{2\pi i s_1} \\
&= \int_{\frac12+\ee-iT_0}^{\frac12+\ee+iT_0}
\sum_{n_1\ge 1}
\sum_{\substack{n_2\le x \\ m_1n_1=m_2n_2}}
\frac{c(n_1) c(n_2) \bm{1}_{r\nmid m_1m_2n_1n_2}}{n_1^{s_1}}
\frac{y^{s_1}\,ds_1}{2\pi i s_1} \\
&\approx \sum_{n_1\le x}
\sum_{\substack{n_2\le x \\ m_1n_1=m_2n_2}}
c(n_1) c(n_2) \bm{1}_{r\nmid m_1m_2n_1n_2}
= \sum_{\substack{1\le n_1,n_2\le x \\ m_1n_1=m_2n_2}}
c(n_1) c(n_2) \bm{1}_{r\nmid m_1m_2n_1n_2},
\end{split}
\end{equation*}
where the error term in the first approximation is (recall $y \defeq \floor{x}+0.5 $)
\begin{equation*}
\begin{split}
&\ll \int_{\frac12+\ee-iT_0}^{\frac12+\ee+iT_0}
\sum_{n_2\ge 1}
\sum_{\substack{n_1\ge 1 \\ m_1n_1=m_2n_2}}
\frac{1}{n_1^{1/2+\ee}}
\left(\frac{y \bm{1}_{n_2\asymp y}}{T_0\abs{y-n_2}}
+ \frac{(4+y)^{1/2+\ee}}{T_0} \frac{1}{n_2^{1/2+\ee}}\right)
\frac{y^{\Re(s_1)}\,\abs{ds_1}}{\abs{s_1}} \\
&\le \int_{\frac12+\ee-iT_0}^{\frac12+\ee+iT_0}
\sum_{n_2\ge 1}
\frac{1}{(m_2n_2/m_1)^{1/2+\ee}}
\left(\frac{y \bm{1}_{n_2\asymp y}}{T_0\abs{y-n_2}}
+ \frac{(4+y)^{1/2+\ee}}{T_0} \frac{1}{n_2^{1/2+\ee}}\right)
\frac{y^{\Re(s_1)}\,\abs{ds_1}}{\abs{s_1}} \\
&\ll_\ee \left(\frac{1}{(m_2y/m_1)^{1/2+\ee}} \frac{y\log{y}}{T_0}
+ \frac{(4+y)^{1/2+\ee}}{T_0} \frac{1}{(m_2/m_1)^{1/2+\ee}}\right)
y^{1/2+\ee} (\log{T_0})
\ll_\ee \frac{y^{1+2\ee}}{T_0} T_0^\ee
\end{split}
\end{equation*}
and the error term in the second approximation is
\begin{equation*}
\begin{split}
&\ll \sum_{n_1\ge 1}
\sum_{\substack{n_2\le x \\ m_1n_1=m_2n_2}}
\left(\frac{y \bm{1}_{n_1\asymp y}}{T_0\abs{y-n_1}}
+ \frac{(4+y)^{1/2+\ee}}{T_0} \frac{1}{n_1^{1/2+\ee}}\right) \\
&\le \sum_{n_1\ge 1} \frac{y \bm{1}_{n_1\asymp y}}{T_0\abs{y-n_1}}
+ \sum_{n_2\le x} \frac{(4+y)^{1/2+\ee}}{T_0} \frac{1}{(m_2n_2/m_1)^{1/2+\ee}}
\ll \frac{y\log{y}}{T_0} + \frac{y}{T_0}.
\end{split}
\end{equation*}
Plugging this into \eqref{INEQ:after-perron-and-R2}, and writing $T_0^{1-\ee}=x^{0.5+\eta}$, gives
\begin{equation*}
\E_{\chi\ne \chi_0} \chi(m_1)\ol{\chi}(m_2) \abs{\sum_{n\le x} c(n) \chi(n)}^2
- \sum_{\substack{1\le n_1,n_2\le x \\ m_1n_1=m_2n_2}}
c(n_1) c(n_2) \bm{1}_{r\nmid m_1m_2n_1n_2}
\ll_\ee \frac{r^{2\ee}x^{1+2\ee}}{x^\eta} + \frac{T_0^{2\ee}x^{1+2\ee}}{r^\eta}.
\end{equation*}

On the other hand, accounting for the missing character $\chi=\chi_0$, we get
\begin{equation}
\begin{split}
\label{principal-grind}
&\frac{r-1}{r-2} \E_\chi \chi(m_1)\ol{\chi}(m_2) \abs{\sum_{n\le x} c(n) \chi(n)}^2
- \E_{\chi\ne \chi_0} \chi(m_1)\ol{\chi}(m_2) \abs{\sum_{n\le x} c(n) \chi(n)}^2 \\
&= \frac{\abs{\sum_{n\le x,\; \gcd(n,r)=1} c(n)}^2 \bm{1}_{r\nmid m_1m_2}}{r-2}
\ll_\ee \frac{r^\ee x^{1+\ee}}{r-2},
\end{split}
\end{equation}
by RH.
Additionally, by perfect orthogonality over $f$, we have
\begin{equation*}
\begin{split}
&\E_f f(m_1)\ol{f}(m_2) \abs{\sum_{n\le x} c(n) f(n)}^2
- \sum_{\substack{1\le n_1,n_2\le x \\ m_1n_1=m_2n_2}}
c(n_1) c(n_2) \bm{1}_{r\nmid m_1m_2n_1n_2} \\
&= \sum_{\substack{1\le n_1,n_2\le x \\ m_1n_1=m_2n_2}}
c(n_1) c(n_2) \bm{1}_{r\mid m_1m_2n_1n_2}
\ll \frac{x}{r},
\end{split}
\end{equation*}
since $m_1,m_2\le U^k\le r^\ee$ are not divisible by $r$.
Multiplying \eqref{principal-grind} by $\frac{r-2}{r-1}$, we find that
\begin{equation*}
\begin{split}
\E_\chi \chi(m_1)\ol{\chi}(m_2) \abs{\sum_{n\le x} c(n) \chi(n)}^2
&\approx \frac{r-2}{r-1} \E_{\chi\ne \chi_0} \chi(m_1)\ol{\chi}(m_2) \abs{\sum_{n\le x} c(n) \chi(n)}^2 \\
&\approx \frac{r-2}{r-1} \sum_{\substack{1\le n_1,n_2\le x \\ m_1n_1=m_2n_2}}
c(n_1) c(n_2) \bm{1}_{r\nmid m_1m_2n_1n_2} \\
&\approx \frac{r-2}{r-1} \E_f f(m_1)\ol{f}(m_2) \abs{\sum_{n\le x} c(n) f(n)}^2 \\
&\approx \E_f f(m_1)\ol{f}(m_2) \abs{\sum_{n\le x} c(n) f(n)}^2,
\end{split}
\end{equation*}
with respective approximation errors
$\ll_\ee \frac{x^{1+\ee}}{r^{1-\ee}}$,
$\ll_\ee \frac{r^{2\ee}x^{1+2\ee}}{x^\eta} + \frac{T_0^{2\ee}x^{1+2\ee}}{r^\eta}$,
$\ll \frac{x}{r}$,
and $\ll \frac{x}{r}$.

We have already justified \eqref{INEQ:new-even} for $r > xU^k$ at the beginning of the proof.
Now assume $r \le xU^k$.
Then $x \ge r/U^k \ge r^{1-\ee}$, so the total error in the last display is $\ll_\ee x/r^{\eta/2}$, provided $\ee$ is small enough in terms of $A$.
So \eqref{INEQ:new-even} holds.
\end{proof}

For the next lemma, let $\Psi(x,y) \defeq \#\{n\le x: p\mid n\Rightarrow p\le y\}$.

\begin{lemma}
[Smooth number bound]
\label{LEM:smooth-number-bound}
Fix $\theta\in (0,\frac12)$.
Suppose $Q\ge 1$ and $\log{Q} \ll (\log{x})^\theta$.
Then $\Psi(x,Q) \le x/e^{(\log{x})^{1-\theta}}$ for all large enough $x$.
\end{lemma}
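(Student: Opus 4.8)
The plan is to use Rankin's trick. For any real $\sigma\in(0,1]$, every $n\le x$ satisfies $(x/n)^\sigma\ge1$, so
$$\Psi(x,Q)\le\sum_{\substack{n\ge1\\ p\mid n\Rightarrow p\le Q}}(x/n)^\sigma=x^\sigma\prod_{p\le Q}(1-p^{-\sigma})^{-1}.$$
Taking logarithms and expanding $-\log(1-t)=\sum_{m\ge1}t^m/m$, the contribution of the terms with $m\ge2$ is $O(1)$ as soon as $\sigma$ is bounded away from $0$ (say $\sigma\ge\tfrac34$): then $\sum_{m\ge2}p^{-m\sigma}\ll p^{-2\sigma}\ll p^{-2}$ for $p\ge3$ (using $p\le Q$), while $p=2$ contributes a bounded amount. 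Hence
$$\log\Psi(x,Q)\le\sigma\log x+\sum_{p\le Q}p^{-\sigma}+O(1).$$

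I would then take $\sigma\defeq1-1/\log Q$. (Bounded $Q$ is disposed of trivially, since there $\Psi(x,Q)\ll_Q(\log x)^{\pi(Q)}$; so we may assume $Q$ is large, hence $\sigma\ge\tfrac34$.) With this choice $\sigma\log x=\log x-u$, where $u\defeq\log x/\log Q$, while for $p\le Q$ one has $p^{1-\sigma}=e^{(\log p)/\log Q}\le e$, so by Mertens' theorem
$$\sum_{p\le Q}p^{-\sigma}=\sum_{p\le Q}p^{-1}\,p^{1-\sigma}\le e\sum_{p\le Q}p^{-1}\ll\log\log Q.$$
Therefore $\log\Psi(x,Q)\le\log x-u+O(\log\log Q)$.

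Finally, the hypothesis $\log Q\ll(\log x)^\theta$ forces $u=\log x/\log Q\gg(\log x)^{1-\theta}$, whereas $\log\log Q\le\log\log x=o\big((\log x)^{1-\theta-\ee}\big)$; so $u-O(\log\log Q)\ge(\log x)^{1-\theta-\ee}$ for all sufficiently large $x$, which is precisely the claimed bound $\Psi(x,Q)\le x\,e^{-(\log x)^{1-\theta-\ee}}$. There is no real obstacle here: the only choice that matters is the shift $\sigma$, which must be close enough to $1$ that $x^\sigma$ costs only a factor $e^{-u}$, yet far enough that the Euler product over $p\le Q$ remains $e^{O(\log\log Q)}$. (Alternatively, the lemma is immediate from the de Bruijn/Canfield--Erd\H{o}s--Pomerance estimate $\Psi(x,y)=x\,u^{-u(1+o(1))}$, uniform in the range $\log y\le(\log x)^{O(1)}$, since $u^{-u}=e^{-u\log u}\le e^{-u}$.)
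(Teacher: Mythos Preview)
Your proof is correct. The only minor slip is the claim ``$p^{-2\sigma}\ll p^{-2}$ for $p\ge 3$'' when $\sigma\ge\tfrac34$: in fact $2\sigma\ge\tfrac32$ gives only $p^{-2\sigma}\le p^{-3/2}$, but since $\sum_p p^{-3/2}$ converges this is harmless and the $O(1)$ bound on the $m\ge 2$ terms stands.

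Your route is different from the paper's in presentation, though not in underlying substance. The paper quotes Corollary~7.9 of Montgomery--Vaughan, which packages Rankin's method into a ready-made inequality of the shape $\Psi(x,(\log x)^a)\le x^{1-1/a}\,x^{(\log a+O(1))/(a\log\log x)}$, and then bounds the second exponent using $\log a\le(\theta+\ee)\log\log x$. You instead run Rankin's trick from scratch with the explicit shift $\sigma=1-1/\log Q$, and control the Euler product directly via Mertens. Both arguments are short; yours is more self-contained (no black box), while the paper's is one line shorter at the cost of an external reference. Your parenthetical appeal to the de~Bruijn/Canfield--Erd\H{o}s--Pomerance estimate is also valid and is, in spirit, what the Montgomery--Vaughan corollary encodes.
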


\begin{proof}
Let $x$ be large.
Increasing $Q$ if necessary, we may assume $\log{Q} \asymp (\log{x})^\theta$.
We have $Q = (\log{x})^a$, where $a = (\log{Q})/(\log\log{x})$
satisfies $1 \le a \le (\log{x})^{1/2}/(2\log\log{x})$.
By Corollary~7.9 of \cite{MV2007}, we have
\begin{equation*}
\Psi(x,Q) \le x^{1-1/a} x^{((\log{a}) + O(1))/(a\log\log{x})}
\le x^{1-1/a} x^{\theta/a}
= x/e^{(1-\theta)(\log{x})/a},
\end{equation*}
because $(\log{a}) + O(1) \le \theta\log\log{x}$.
Since $(\log{x})/a \ge (1-\theta)^{-1}(\log{x})^{1-\theta}$,
it follows that
$\Psi(x,Q)
\le x/e^{(\log{x})^{1-\theta}}$,
as desired.
\end{proof}

In order to proceed, we need an approximation result from \cite{harpertypical}:
\begin{lemma}[Harper {\cite[Approximation Result~1]{harpertypical}}]\label{LEM: APPROX}
    Let $N\in \mathbb{N}$ be large, and $\delta>0$ be small. There exist functions $g: \mathbb{R} \to \mathbb{R}$ (depending on $\delta$) and $g_{N+1}: \mathbb{R} \to \mathbb{R}$ (depending on $\delta$ and $N$) such that, if we define $g_j(x)=g(x-j)$ for all integers $|j|\le N$, we have the following properties: 
    \begin{enumerate}[label=(\roman*)]
        \item $\sum_{|j|\le N}g_j(x) + g_{N+1}(x) = 1$ for all $x\in \mathbb{R}$; 
        \item $g(x)\ge 0$ for all $x \in \mathbb{R}$, and $g(x)\le \delta $ whenever $|x|>1$;
        \item $g_{N+1}(x)\ge 0$ for all $x\in \mathbb{R}$, and $g_{N+1}(x)\le \delta $ whenever $|x|\le N$;
        \item for all $\ell \in \mathbb{N}$ and all $x\in \mathbb{R}$, we have $|\frac{d^{\ell}}{dx^{\ell}} g(x)|\le \frac{1}{\pi(\ell +1 )} \Big(\frac{2\pi}{\delta}\Big)^{\ell +1}$.
    \end{enumerate}
\end{lemma}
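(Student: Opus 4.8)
The plan is to construct $g$ by hand as a band-limited function, so that the derivative bound (iv) reduces to a one-line Bernstein-type estimate and (i)--(iii) can be read off the support and size of $\widehat{g}$. Fix the normalization $\widehat{h}(\xi) \defeq \int_{\R} h(x) e^{-2\pi i x\xi}\,dx$. I would take
\[
g \defeq \mathbbm{1}_{[-1/2,1/2]} * h,
\qquad
h(u) \defeq \frac{1}{\delta}\left(\frac{\sin(\pi u/\delta)}{\pi u/\delta}\right)^{\!2}
= \frac{\delta\,\sin^{2}(\pi u/\delta)}{\pi^{2}u^{2}},
\]
so that $g(x) = \int_{x-1/2}^{x+1/2} h(u)\,du$ and, on the Fourier side,
\[
\widehat{g}(\xi) = \frac{\sin(\pi\xi)}{\pi\xi}\cdot \max\!\big(1-\delta\abs{\xi},\,0\big).
\]
Both factors of $g$ are nonnegative, so $g\ge 0$ for free. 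I would then set $g_j(x)\defeq g(x-j)$ for $\abs{j}\le N$ and $g_{N+1}\defeq 1-\sum_{\abs{j}\le N}g_j$, and check (i)--(iv) for these choices.

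First I would dispatch (iv). Since $\widehat{g}$ is supported in $[-1/\delta,1/\delta]$ and $\abs{\widehat{g}(\xi)}\le 1$ there (each factor is at most $1$ in modulus), differentiating $g(x)=\int\widehat{g}(\xi)e^{2\pi i x\xi}\,d\xi$ under the integral sign gives
\[
\abs{g^{(\ell)}(x)}
\le \int_{-1/\delta}^{1/\delta}\big(2\pi\abs{\xi}\big)^{\ell}\,d\xi
= \frac{2(2\pi)^{\ell}}{(\ell+1)\,\delta^{\ell+1}}
= \frac{1}{\pi(\ell+1)}\left(\frac{2\pi}{\delta}\right)^{\ell+1},
\]
which is exactly (iv) and pins down why the bandwidth must be $1/\delta$. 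For the smallness in (ii)--(iii) I would use only the bound $h(u)\le \delta/(\pi^{2}u^{2})$: if $\abs{x}>1$ then $[x-\tfrac12,x+\tfrac12]$ lies at distance $>\tfrac12$ from $0$, so $g(x)\le \tfrac{\delta}{\pi^{2}}\int_{x-1/2}^{x+1/2}u^{-2}\,du = \tfrac{\delta}{\pi^{2}(x^{2}-1/4)}<\delta$; and for $\abs{x}\le N$ each term of $g_{N+1}(x)=\sum_{\abs{j}\ge N+1}g(x-j)$ is at most $\tfrac{\delta}{\pi^{2}}$ times the integral of $u^{-2}$ over a unit interval at distance $\ge \abs{j}-N-\tfrac12$ from $0$, so $g_{N+1}(x)\le \tfrac{2\delta}{\pi^{2}}\sum_{m\ge 1}(m-\tfrac12)^{-2}=\tfrac{2\delta}{\pi^{2}}\cdot\tfrac{\pi^{2}}{2}=\delta$. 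Nonnegativity of $g_{N+1}=\sum_{\abs{j}>N}g_j$ then follows from $g\ge 0$. Finally, for (i): $g$ is continuous with $g(x)\ll_{\delta}(1+\abs{x})^{-2}$ and $\widehat{g}$ is continuous of compact support, so Poisson summation yields $\sum_{j\in\Z}g(x-j)=\sum_{n\in\Z}\widehat{g}(n)e^{2\pi i n x}$; since $\widehat{g}(0)=1$ and $\widehat{g}(n)=0$ for every nonzero integer $n$ (the sine factor vanishes there), the right-hand side is identically $1$, which gives (i) and the compatibility $g_{N+1}=\sum_{\abs{j}>N}g_j$ used above.

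The main obstacle --- really the only design decision --- is reconciling (iv) with the smallness in (ii)--(iii): the derivative bound forces $\widehat{g}$ to be supported in $[-1/\delta,1/\delta]$, so $g$ cannot be compactly supported, and ``$g\le\delta$ off $[-1,1]$'' must come from decay rather than support. This is why $h$ is taken to be a dilated squared cardinal sine: it is nonnegative (so $g\ge 0$ is automatic), it decays like $u^{-2}$ with summable tails (so the constants in (ii)--(iii) collapse to $\sum_{m\ge 1}(m-\tfrac12)^{-2}=\pi^{2}/2$), and its transform is a genuine triangle of height $1$ (so $\abs{\widehat{g}}\le 1$ on the support). Convolving with exactly $\mathbbm{1}_{[-1/2,1/2]}$, and not some other bump, is what makes (i) \emph{exact}: its transform $\sin(\pi\xi)/(\pi\xi)$ vanishes precisely at the nonzero integers. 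The remaining work is routine --- checking the edge cases $\abs{x}=1$ and $j=\pm(N+1)$, and confirming the precise constant $1/(\pi(\ell+1))$ in (iv) --- and all of it comes out as claimed.
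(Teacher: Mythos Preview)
Your construction is correct: the band-limited choice $g=\mathbbm{1}_{[-1/2,1/2]}*h$ with $h$ the dilated Fej\'er kernel delivers (i)--(iv) exactly as you outline, and each verification (the Bernstein bound for (iv), the $u^{-2}$ tail bounds for (ii)--(iii), Poisson summation for (i)) goes through cleanly. The paper itself does not prove this lemma at all; it simply quotes it verbatim from Harper's paper \cite{harpertypical} as ``Approximation Result~1'' and uses it as a black box. So there is nothing to compare against here --- you have supplied a complete proof where the paper gives only a citation.
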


We are now prepared to give a variant, Proposition~\ref{prop: key}, of \cite[Proposition~1]{harpertypical},
with the following main differences:
\begin{enumerate}
\item Most importantly, we allow $x\le r^A$ for any fixed $A>0$.
The cost is that we only allow \emph{special} sequences of coefficients $c(n)$.
More precisely, we either take $c(n) = \mu(n)$ for all $n$,
or take $c(n) = \lambda(n)$ for all $n$.

\item We introduce a restriction $P^{400(Y/\delta)^2 \log(N\log{P})} \le r^\ee$,
in order to be able to apply the Ratios Conjecture.
We will actually assume the conditions \eqref{technical-prop-1-condition-1} and \eqref{technical-prop-1-condition-2}, which are nonetheless satisfied for the key choice of parameters in \eqref{key-prop-1-parameters}.

\item The deterministic side is supported on all moduli $n\le x$,
while the random side is supported on $\{n\le x: P(n) > Q\}$ for some parameter $Q\le e^{(\log{x})^{1/3}}$.
The perfect character orthogonality used in \cite[\S~3.2]{harpertypical} (to pass to $P(n)>Q$) is no longer available for large $x$, so it seems more natural for us to pass to $P(n) > Q$ on the random side, not the deterministic side.

The cost of this asymmetry is that we can no longer easily\footnote{Harper notes that $\Psi(x,x^{1/\log\log{x}}) \ll x/(\log{x})^{c\log\log\log{x}}$, but this bound does not fit into the error term of our proposition, in the key setting \eqref{key-prop-1-parameters}.}
take $Q = x^{1/\log\log{x}} = e^{(\log{x})/\log\log{x}}$ like Harper does in \cite[\S~3.2]{harpertypical}.
Moreover, in our \S~\ref{SUBSEC:pass-to-Euler} below, we will need to take a certain smoothing parameter $X$ to be a bit smaller than Harper's choice of $e^{(\log{x})^{1/2}}$ in \cite[\S~3.4]{harpertypical},
since the proof of a certain sieve bound (in Lemma~\ref{LEM:applied-sieve-bounds})
requires $X$ to be a bit smaller than $Q$.

\item We get a worse error term, with $x/(N\log{P})^{10Y}$ in place of $x/(N\log{P})^{Y/\delta^2}$.
Essentially, the effect is that
an error term of the form $x\Big(\frac{2N+2}{(N\log{P})^{1/\delta^2}}\Big)^Y$ in \cite[\S~3.3]{harpertypical}
becomes $x \Big(\frac{2N+2}{(N\log{P})^{10}}\Big)^Y$ in \S~\ref{SUBSEC:pass-to-random} of our work below.
\end{enumerate}

At this point it may help to note that we will eventually apply the proposition with parameter values analogous to those of Harper:
\begin{equation}
\label{key-prop-1-parameters}
\log{P} \asymp (\log{x})^{1/6},
\quad Y \asymp (\log{P})^{1.02},
\quad N \asymp \log\log{P},
\quad \delta \asymp (\log{P})^{-1.3}.
\end{equation}
However, we will state it in slightly greater generality for the reader's convenience.

\begin{proposition}
\label{prop: key}
Assume Conjecture~\ref{CNJ:twisted-R2}.
Let $x,N,\delta^{-1},P>0$ be large real numbers, with $N\in \N$.
Let $c\in \{\mu,\lambda\}$
and $1\le Q\le e^{(\log{x})^{1/3}}$.
Let $g_j\maps \R\to \R$, for $j\in [-N,N+1]$, be functions as in Lemma~\ref{LEM: APPROX}, with associated parameters $N$ and $\delta$.
Let $Y\in \N$.
Fix $A,\ee>0$, suppose $x\le r^A$, and assume that
\begin{align}
\max(P,\delta^{-1},N)^{400(Y/\delta)^2 \log(N\log{P})} &\le r^\ee,
\label{technical-prop-1-condition-1} \\
20Y\log(N\log{P}) &\le (\log{x})^{1/2}.
\label{technical-prop-1-condition-2}
\end{align}
Let $j(1),\dots,j(Y)\in [-N,N+1]$ be indices.
Let $$G_{f,\bm{j}}(\bm{a})\defeq \prod_{1\le i\le Y} g_{j(i)}\Big(\Re \sum_{p\le P} \frac{a_i(p)f(p)}{p^{1/2}} + \frac{a_i(p^2)f(p^2)}{p}\Big),$$
where $a_i(p),a_i(p^2)\in \{z\in \C: \abs{z}\le 1\}$ for all $i$ and $p$.
If $\ee$ is small enough in terms of $A$, then
\begin{equation}
\label{key-prop-goal}
\E_\chi G_{\chi,\bm{j}}(\bm{a}) \abs{\sum_{n\le x} c(n) \chi(n)}^2
= \E_f G_{f,\bm{j}}(\bm{a}) \abs{\sum_{\substack{n\le x: \\ P(n)>Q}} c(n) f(n)}^2
+ O\Big(\frac{x}{(N\log{P})^{10Y}}\Big).
\end{equation}
\end{proposition}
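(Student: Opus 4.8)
The plan is to prove \eqref{key-prop-goal} by two successive comparisons. Abbreviate $S(\psi)\defeq\sum_{n\le x}c(n)\psi(n)$ and $S^{>Q}(\psi)\defeq\sum_{n\le x:\,P(n)>Q}c(n)\psi(n)$ for completely multiplicative $\psi$, and write $w_i(\psi)\defeq\Re\sum_{p\le P}\bigl(a_i(p)\psi(p)p^{-1/2}+a_i(p^2)\psi(p^2)p^{-1}\bigr)=\Re Q_i(\psi)$, where $Q_i(\psi)=\sum_{q\in\mcal Q}q^{-1/2}a_i(q)\psi(q)$ with $\mcal Q=\{p\le P\}\cup\{p^2:p\le P\}$ and $\abs{a_i(q)}\le 1$ --- exactly the shape fed to Lemma~\ref{LEM:even-moment-estimate} with $\mcal P=\{p\le P\}$. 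By properties (i)--(iii) of Lemma~\ref{LEM: APPROX}, each $g_j$ and $g_{N+1}$ takes values in $[0,1]$, so $\abs{G_{\psi,\bm j}(\bm a)}\le 1$. The two comparisons are: \textbf{(A)} $\E_\chi G_{\chi,\bm j}(\bm a)\abs{S(\chi)}^2=\E_f G_{f,\bm j}(\bm a)\abs{S(f)}^2+O(x/(N\log P)^{10Y})$, which uses the twisted Ratios input through Lemma~\ref{LEM:even-moment-estimate}; and \textbf{(B)} $\E_f G_{f,\bm j}(\bm a)\abs{S(f)}^2=\E_f G_{f,\bm j}(\bm a)\abs{S^{>Q}(f)}^2+O(x/(N\log P)^{10Y})$, which uses the smooth-number bound.

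Comparison (B) is routine. Since $\abs{G_{f,\bm j}(\bm a)}\le 1$, the inequality $\abs{\abs{z}^2-\abs{w}^2}\le\abs{z-w}\,(\abs{z}+\abs{w})$ and Cauchy--Schwarz give that the two sides of (B) differ by $\ll(\E_f\abs{S(f)-S^{>Q}(f)}^2)^{1/2}(\E_f\abs{S(f)}^2)^{1/2}$. By Steinhaus orthogonality $\E_f\abs{S(f)}^2=\sum_{n\le x}\abs{c(n)}^2\le x$, and $\E_f\abs{S(f)-S^{>Q}(f)}^2=\sum_{n\le x:\,P(n)\le Q}\abs{c(n)}^2\le\Psi(x,Q)$; since $\log Q\le(\log x)^{1/3}$, Lemma~\ref{LEM:smooth-number-bound} with $\theta=1/3$ gives $\Psi(x,Q)\le x\,e^{-(\log x)^{2/3-\ee}}$. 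Hence the error in (B) is $\ll x\,e^{-(\log x)^{2/3-\ee}/2}$, which is $\ll x/(N\log P)^{10Y}$ by \eqref{technical-prop-1-condition-2}, since that forces $(N\log P)^{10Y}=e^{10Y\log(N\log P)}\le e^{(\log x)^{1/2}/2}$.

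Comparison (A) is the \emph{crux}. Using $g_{N+1}=1-\sum_{\abs j\le N}g_j$ we may assume all $j(i)\in[-N,N]$, multiplying the count of terms below by at most $(2N+2)^Y$. Fix a Taylor degree $D$ (to be chosen). By Taylor's theorem about $0$ and property (iv) of Lemma~\ref{LEM: APPROX}, for each $i$ there is a polynomial $P_{j(i)}$ of degree $\le D$ with $\abs{g_{j(i)}(t)-P_{j(i)}(t)}\le(2\pi/\delta)^{D+1}\abs{t-j(i)}^D/D!$ for all $t\in\R$. Write $G_{\psi,\bm j}(\bm a)=\Pi(\psi)+R(\psi)$ with $\Pi(\psi)\defeq\prod_{1\le i\le Y}P_{j(i)}(w_i(\psi))$. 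Expanding $w_i(\psi)=\tfrac12(Q_i(\psi)+\ol{Q_i(\psi)})$ and using complete multiplicativity, $\Pi(\psi)=\sum_{m_1,m_2}C_{m_1,m_2}\,\psi(m_1)\ol{\psi}(m_2)$ with coefficients $C_{m_1,m_2}$ independent of $\psi$, supported on $P$-smooth $m_1,m_2$ with $m_1m_2\le P^{2DY}$, and $\sum_{m_1,m_2}\abs{C_{m_1,m_2}}\le\max(P,\delta^{-1},N)^{O(DY)}$. If $D$ is small enough that $DY\le c(Y/\delta)^2\log(N\log P)$ for a suitable absolute $c>0$, then \eqref{technical-prop-1-condition-1} forces both $2DY\log P\le\ee\log r$ (so $m_1m_2\le r^\ee$) and $\sum_{m_1,m_2}\abs{C_{m_1,m_2}}\le r^\ee$; applying \eqref{INEQ:new-even} to each $(m_1,m_2)$ and summing,
\[
\E_\chi\Pi(\chi)\abs{S(\chi)}^2-\E_f\Pi(f)\abs{S(f)}^2\ll_A\Bigl(\sum_{m_1,m_2}\abs{C_{m_1,m_2}}\Bigr)\frac{x}{r^{\eta/2}}\ll\frac{x}{r^{\eta/4}}\ll\frac{x}{(N\log P)^{10Y}},
\]
the last bound because $x\le r^A$ and \eqref{technical-prop-1-condition-2} gives $10Y\log(N\log P)\le(\log x)^{1/2}/2\le(\eta/4)\log r$ for $x$ large.

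It remains to bound $\E_\psi\abs{R(\psi)}\,\abs{S(\psi)}^2$ for $\psi\in\{\chi,f\}$. Since $\abs{g_{j(i)}}\le 1$, expanding $\prod_i(P_{j(i)}(w_i)+\rho_i)$ with $\rho_i\defeq g_{j(i)}(w_i)-P_{j(i)}(w_i)$ (so $\abs{\rho_i}\le(2\pi/\delta)^{D+1}\abs{w_i-j(i)}^D/D!$) gives $\abs{R(\psi)}\le\prod_i(1+\abs{\rho_i})-1=\sum_{\emptyset\ne T\subseteq\{1,\dots,Y\}}\prod_{i\in T}\abs{\rho_i}$; by AM--GM this reduces matters to bounding $\E_\psi\abs{w_i(\psi)-j(i)}^{D\abs T}\abs{S(\psi)}^2$, and since $\abs{w_i-j(i)}\le\abs{Q_i}+N$ this is $\ll 2^{D\abs T}\bigl(\E_\psi\abs{Q_i(\psi)}^{D\abs T}\abs{S(\psi)}^2+N^{D\abs T}\E_\psi\abs{S(\psi)}^2\bigr)$ (the exponents being even, as we may arrange by enlarging $D$ by one). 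On the Steinhaus side, \eqref{INEQ:old-even} applies directly: with $\mcal P=\{p\le P\}$ one has $2\sum_{q\in\mcal Q}q^{-1}v_q\abs{a_i(q)}^2\ll\log\log P$ and $\sum_{n\le x}\tilde d(n)\abs{c(n)}^2\ll x\log x$ (this uses $\abs{c(n)}\le1$; recall \eqref{INEQ:old-even} fails for $c\equiv1$), so $\E_f\abs{Q_i(f)}^{D\abs T}\abs{S(f)}^2\ll x\log x\,(D\abs T/2)!\,(C\log\log P)^{D\abs T/2}$; on the character side, expanding $\abs{Q_i(\chi)}^{D\abs T}$ into $\chi(m_1)\ol{\chi}(m_2)$ terms with $m_1,m_2\le P^{DY}\le r^\ee$ and invoking \eqref{INEQ:new-even} transfers the same bound up to a negligible $O(x r^{-\eta/4})$, and $\E_\psi\abs{S(\psi)}^2\ll x$ likewise. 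Feeding these into the factor $\bigl((2\pi/\delta)^{D+1}/D!\bigr)^{\abs T}$ and invoking Stirling, one gets $\E_\psi\abs{R(\psi)}\abs{S(\psi)}^2\ll x/(N\log P)^{10Y}$ once $D$ is large enough in terms of $Y,\delta^{-1},N,\log P,\log\log x$. Combining this with the display of the previous paragraph proves (A), and (A) together with (B) gives \eqref{key-prop-goal}. The \textbf{main obstacle} is precisely this choice of $D$: it must be large enough for $(2\pi/\delta)^{D}/D!$ to crush both $(D\abs T/2)!\,(C\log\log P)^{D\abs T/2}$ and $N^{D\abs T}$ (and the stray $\log x$), yet small enough that the moduli $m_1m_2\le P^{2DY}$ produced on expanding $\Pi$ and $\abs{Q_i(\chi)}^{D\abs T}$ stay $\le r^\ee$, so that \eqref{INEQ:new-even} is available term by term; hypotheses \eqref{technical-prop-1-condition-1}--\eqref{technical-prop-1-condition-2} are calibrated exactly so that some admissible $D$ exists, in particular for the parameters \eqref{key-prop-1-parameters}. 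A secondary point, absent from \cite{harpertypical}, is that everything rests on \eqref{INEQ:old-even} and \eqref{INEQ:new-even}, which genuinely need $c\in\{\mu,\lambda\}$.
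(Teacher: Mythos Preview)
Your approach is essentially the paper's: both first reduce to $Q=0$ via Lemma~\ref{LEM:smooth-number-bound} and \eqref{technical-prop-1-condition-2}, then Taylor-expand each $g_{j(i)}$ to a carefully chosen degree, handle the resulting polynomial part term by term via \eqref{INEQ:new-even}, and control the Taylor remainder using \eqref{INEQ:old-even} on the random side together with a second application of \eqref{INEQ:new-even} to transfer the same bound to the character side. The paper makes the degree choice explicit, taking $D=2S$ with $S=100Y\lfloor\delta^{-2}\log(N\log P)\rfloor$, and then carries out the Stirling bookkeeping in full (the quantities $R_0(Y)$, $H_2$, and Lemma~\ref{LEM:prop-1-numerics}); you correctly identify this balance as the crux but leave the verification as an assertion.

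A few small corrections are in order. Since you expand about $0$, the Taylor remainder should be governed by $\abs{t}^D$, not $\abs{t-j(i)}^D$; with this fix the spurious $N^{D\abs T}$ term disappears and the estimate matches the paper's. Your reduction ``assume all $j(i)\in[-N,N]$'' via $g_{N+1}=1-\sum_{\abs j\le N}g_j$ produces constant factors $1$ as well as $g_j$'s, so it is not literally a reduction to the stated case; the paper instead keeps $j=N+1$ throughout and uses the uniform derivative bound \eqref{gj-derivative-bound}, which already absorbs the factor $2N+1$. Finally, it is \eqref{INEQ:new-even}, not \eqref{INEQ:old-even}, that genuinely fails for $c\equiv 1$ (cf.\ the footnote after \eqref{INEQ:new-even}); \eqref{INEQ:old-even} is inherited from \cite[Lemma~1]{harpertypical} and holds for general bounded $c$.
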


\begin{proof}
Before proceeding, we note that $0\le g_j(y)\le 1$ for all $j\in [-N,N+1]$ and $y\in \R$, by Lemma~\ref{LEM: APPROX} properties~(i)--(iii).
In particular, $0\le G_{f,\bm{j}}(\bm{a})\le 1$.
Also, by properties~(iv) and~(i) in Lemma~\ref{LEM: APPROX},
we have for all $l\ge 0$
\begin{equation}
\label{gj-derivative-bound}
\frac{\abs{g^{(l)}_j(0)}}{l!}
\le \bm{1}_{l=0}
+ \frac{(2N+1) (2\pi/\delta)^{l+1}}{(l+1)!} \bm{1}_{l\ge 1}.
\end{equation}

For the main proof,
we first reduce to the case $Q=0$,
i.e.~we bound the ``missing'' contribution from $P(n)\le Q$ on the right-hand side of \eqref{key-prop-goal}.
Squaring both sides of $$\sum_{\substack{n\le x: \\ P(n)>Q}} c(n) f(n)
= \sum_{n\le x} c(n) f(n) - \sum_{\substack{n\le x: \\ P(n)\le Q}} c(n) f(n),$$
then using the bound $\abs{G_{f,\bm{j}}(\bm{a})} \le 1$
and the triangle inequality, we find that
\begin{equation*}
\E_f G_{f,\bm{j}}(\bm{a}) \abs{\sum_{\substack{n\le x: \\ P(n)>Q}} c(n) f(n)}^2
- \E_f G_{f,\bm{j}}(\bm{a}) \abs{\sum_{n\le x} c(n) f(n)}^2
\ll S_{Q,Q} + S_{Q,x},
\end{equation*}
where
\begin{equation*}
S_{Q,R} \defeq
\E_f \abs{\sum_{\substack{n\le x: \\ P(n)\le Q}} c(n) f(n)}
\abs{\sum_{\substack{n\le x: \\ P(n)\le R}} c(n) f(n)}.
\end{equation*}
By orthogonality over $f$, we have $S_{R,R} \le \Psi(x,R)$.
By the Cauchy--Schwarz inequality, $S_{Q,x} \le S_{Q,Q}^{1/2} S_{x,x}^{1/2} \le \Psi(x,Q)^{1/2} x^{1/2}$.
By Lemma~\ref{LEM:smooth-number-bound} with $\theta=1/3$,
we have $\Psi(x,Q) \ll x/e^{(\log{x})^{1/2}}$, say.
So
\begin{equation*}
S_{Q,Q} + S_{Q,x}
\le \Psi(x,Q) + \Psi(x,Q)^{1/2} x^{1/2}
\ll x/e^{\frac12(\log{x})^{1/2}}.
\end{equation*}
This fits into the error term of \eqref{key-prop-goal}, by \eqref{technical-prop-1-condition-2}.

Therefore, it remains to prove
\begin{equation}
\label{EQN:Q0-goal}
\E_\chi G_{\chi,\bm{j}}(\bm{a}) \abs{\sum_{n\le x} c(n) \chi(n)}^2
= \E_f G_{f,\bm{j}}(\bm{a}) \abs{\sum_{n\le x} c(n) f(n)}^2
+ O\Big(\frac{x}{(N\log{P})^{10Y}}\Big),
\end{equation}
i.e.~the $Q=0$ case.
The estimate \eqref{EQN:Q0-goal} will be proven using Lemma~\ref{LEM:even-moment-estimate}.
The main new difficulty is to account for the error term in \eqref{INEQ:new-even};
the contribution from terms of the form \eqref{INEQ:old-even} will essentially match the error terms Harper already accounted for.
We begin by using Lemma~\ref{LEM: APPROX} as Harper does in \cite[proof of Proposition~1]{harpertypical}.
Let
\begin{equation}
\label{define-S-in-Prop-1}
S \defeq 100Y\floor{(1/\delta)^2 \log(N\log{P})} \ge 100.
\end{equation}
For all $y\in \R$, we then have $g_j(y) = \tilde{g}_j(y) + r_j(y)$,
where $\tilde{g}_j(y) \defeq \sum_{0\le l\le 2S-1} g^{(l)}_j(0) \frac{y^l}{l!}$ and
\begin{equation*}
\abs{r_j(y)} \le \alpha
\frac{N}{\delta} \frac{\abs{2\pi y/\delta}^{2S}}{(2S+1)!},
\end{equation*}
by \cite[first paragraph of the proof of Proposition~1]{harpertypical},
for some absolute constant $\alpha>0$ that we name for later convenience.
The bound \eqref{gj-derivative-bound} implies
\begin{equation*}
\sum_{0\le l\le 2S-1} \frac{\abs{g^{(l)}_j(0)y^l}}{l!}
\le 1 + \sum_{0\le l\le 2S-1} \frac{(2N+1) (2\pi) \abs{2\pi y/\delta}^l}
{\delta (l+1)!}
\ll \frac{N}{\delta} \frac{\max(4S,\abs{2\pi y/\delta})^{2S}}{(2S)!},
\end{equation*}
where in the final step we note that $\frac{\max(4S,\abs{2\pi y/\delta})^l}{(l+1)!} \le \frac12 \frac{\max(4S,\abs{2\pi y/\delta})^{l+1}}{(l+2)!}$ for $0\le l\le 2S-2$,
and that $1 \ll \frac{N}{\delta}$ since $N,\delta^{-1}$ are large.
Let
\begin{equation*}
\widetilde{G}_{f,\bm{j}}(\bm{a})\defeq \prod_{1\le i\le Y} \tilde{g}_{j(i)}\Big(\Re \sum_{p\le P} \frac{a_i(p)f(p)}{p^{1/2}} + \frac{a_i(p^2)f(p^2)}{p}\Big).
\end{equation*}
Now write $\Re z = \frac12(z+\ol{z})$, and expand $\widetilde{G}$ using the definition of each $\tilde{g}_{j(i)}$.
Note that \eqref{technical-prop-1-condition-1} and \eqref{define-S-in-Prop-1} imply $P^{4SY} \le r^\ee$.
By \eqref{INEQ:new-even} with $U=P^{4SY}\le r^\ee$ and $k=1$, we get
\begin{equation}
\label{handle-polynomial-Gtilde}
\E_\chi \widetilde{G}_{\chi,\bm{j}}(\bm{a}) \abs{\sum_{n\le x} c(n) \chi(n)}^2
- \E_f \widetilde{G}_{f,\bm{j}}(\bm{a}) \abs{\sum_{n\le x} c(n) f(n)}^2
\ll O(1)^Y R_0(Y),
\end{equation}
where for any real $t\ge 0$ we define, for later convenience,
\begin{equation}
\label{define-R0t}
R_0(t) \defeq \frac{x}{r^{\eta/2}}
\left(2 + \frac{N}{\delta} \frac{\max(4S,\abs{2\pi \sum_{p\le P} (p^{-1/2}+p^{-1})/\delta})^{2S}}{(2S)!}\right)^{\!t}.
\end{equation}

Next, the bound $\abs{\tilde{g}_j(y)} \le \abs{g_j(y)} + \abs{r_j(y)} \le 1 + \alpha \frac{N}{\delta} \frac{\abs{2\pi y/\delta}^{2S}}{(2S+1)!}$ observed by Harper gives
\begin{equation}
\label{handle-chi-Taylor-remainder}
\E_\chi \abs{\widetilde{G}_{\chi,\bm{j}}(\bm{a})-G_{\chi,\bm{j}}(\bm{a})}
\abs{\sum_{n\le x} c(n) \chi(n)}^2
\le \sum_{1\le i\le Y} \E_\chi H_1(\chi,i)
\end{equation}
by the triangle inequality,
where for any function $f$ we let $H_1(f,i)$ denote the quantity
\begin{equation*}
\abs{\sum_{n\le x} c(n) f(n)}^2 \prod_{1\le l\le i}
\left(\bm{1}_{l<i} + \frac{\alpha N}{\delta} \frac{\abs{2\pi \sum_{p\le P} (p^{-1/2}a_l(p)f(p)+p^{-1}a_l(p^2)f(p^2))/\delta}^{2S}}{(2S+1)!}\right).
\end{equation*}
Similarly,
\begin{equation}
\label{handle-f-Taylor-remainder}
\E_f \abs{\widetilde{G}_{f,\bm{j}}(\bm{a})-G_{f,\bm{j}}(\bm{a})}
\abs{\sum_{n\le x} c(n) f(n)}^2
\le \sum_{1\le i\le Y} \E_f H_1(f,i).
\end{equation}

However, if we expand the product $\prod_{1\le l\le i}$ in $H_1(f,i)$ into monomials of $f(p)$ and $\ol{f}(p)$, then an application of \eqref{INEQ:new-even} with $U=P^{4Si}\le P^{4SY}\le r^\ee$ and $k=1$ shows that
\begin{equation*}
\E_\chi H_1(\chi,i)
- \E_f H_1(f,i)
\ll O(1)^i R_0(i),
\end{equation*}
where we use the inequalities $\bm{1}_{l<i}\le 1$ and $\abs{a_l(\cdot)}\le 1$
to bound the coefficients of the expansion.
Therefore,
\begin{equation}
\label{pass-from-H1chi-to-H1f}
\sum_{1\le i\le Y} \E_\chi H_1(\chi,i)
- \sum_{1\le i\le Y} \E_f H_1(f,i)
\ll \sum_{1\le i\le Y} O(1)^i R_0(i)
\ll O(1)^Y R_0(Y),
\end{equation}
since $R_0(i+1)/R_0(i) \ge 2$.
Next, we bound $\E_f H_1(f,i)$ as Harper does,
using the fact that $\bm{1}_{l<i} = 0$ for $l=i$.
That is,
we first expand the product $\prod_{1\le l\le i}$ in $H_1(f,i)$ as a sum of $2^{i-1}$ terms,
then apply H\"{o}lder's inequality (as Harper does implicitly) in the form
\begin{equation*}
\begin{split}
&\E_f \abs{\sum_{n\le x} c(n) f(n)}^2
\prod_{l\in J} \abs{\sum_{p\le P} (p^{-1/2}a_l(p)f(p)+p^{-1}a_l(p^2)f(p^2))}^{2S} \\
&\le \prod_{l\in J} \left(\E_f \abs{\sum_{n\le x} c(n) f(n)}^2 \abs{\sum_{p\le P} (p^{-1/2}a_l(p)f(p)+p^{-1}a_l(p^2)f(p^2))}^{2S\card{J}}\right)^{\!1/\card{J}}
\end{split}
\end{equation*}
for various sets $J\subseteq \{1,\dots,i\}$ with $i\in J$,
and finally use \eqref{INEQ:old-even},
to obtain the bound
\begin{equation}
\label{harper-H1f-bound}
\sum_{1\le i\le Y} \E_f H_1(f,i)
\ll O(1)^Y H_2,
\end{equation}
where
\begin{equation*}
H_2 \defeq
\sum_{n\le x} \tilde{d}(n) \abs{c(n)}^2
\sum_{1\le j\le i\le Y} \binom{i-1}{j-1} (jS)!
\left(\frac{N}{\delta} \frac{\abs{2\pi/\delta}^{2S}(2\sum_{p\le P} (p^{-1}+6p^{-2}))^S}{(2S+1)!}\right)^{\!j}.
\end{equation*}
The factor $\binom{i-1}{j-1}$ represents the number of sets $J\subseteq \{1,\dots,i\}$ with $i\in J$ and $\card{J}=j$.

Combining \eqref{handle-polynomial-Gtilde}, \eqref{handle-chi-Taylor-remainder}, and \eqref{handle-f-Taylor-remainder},
via the triangle inequality, we get
\begin{equation}
\label{final-key-prop-grind}
\begin{split}
&\E_\chi G_{\chi,\bm{j}}(\bm{a}) \abs{\sum_{n\le x} c(n) \chi(n)}^2
- \E_f G_{f,\bm{j}}(\bm{a}) \abs{\sum_{n\le x} c(n) f(n)}^2 \\
&\ll O(1)^Y R_0(Y)
+ \sum_{1\le i\le Y} \E_\chi H_1(\chi,i)
+ \sum_{1\le i\le Y} \E_f H_1(f,i)
\ll O(1)^Y (R_0(Y) + H_2),
\end{split}
\end{equation}
where in the final step we bound the $H_1$ contributions using \eqref{pass-from-H1chi-to-H1f} and \eqref{harper-H1f-bound}.
To estimate the terms $R_0(Y)$ and $H_2$ in \eqref{final-key-prop-grind}, we will repeatedly use the well-known bounds $n! \ge (n/e)^n$ and $n! \ll n^{1/2} (n/e)^n$ for integers $n\ge 1$.
By \eqref{define-R0t},
\begin{equation*}
R_0(Y) \le \frac{x}{r^{\eta/2}}
\left(2 + \frac{N}{\delta} \frac{(4S)^{2S} + O(P^{1/2}/\delta)^{2S}}{(2S/e)^{2S}}\right)^{\!Y}
\le \frac{O(N/\delta)^Y x}{r^{\eta/2}}
((2e)^{2S} + O(P^{1/2}/\delta)^{2S})^Y,
\end{equation*}
since $N$ is large and $\delta$ is small.
By \eqref{technical-prop-1-condition-1} we have $\max(P,\delta^{-1},N)^{4SY} \le r^\ee$, so this means
\begin{equation*}
O(1)^Y R_0(Y)
\le \frac{x}{r^{\eta/2}}
(PN\delta^{-1})^{4SY}
\le \frac{r^{3\ee} x}{r^{\eta/2}}
\le \frac{x}{r^{5\ee}}
\le \frac{x}{(NP)^{10SY}}
\le \frac{x}{(N\log{P})^{10Y}},
\end{equation*}
because $(NP)^{2SY} \le r^\ee$ and $S\ge 1$.

We now estimate $H_2$ as Harper does,
after summing over $i\in [j,Y]$ using the hockey-stick identity $\sum_{j\le i\le Y} \binom{i-1}{j-1} = \binom{Y}{j}$.
Noting that $\sum_{n\le x} \tilde{d}(n) \le x \prod_{p\le P} (1-p^{-1})^{-1} \ll x \log{P}$ and $\sum_{p\le P} p^{-1} = \log\log{P} + O(1)$, we get
\begin{equation*}
\begin{split}
H_2 &\ll x \log{P}
\sum_{1\le j\le Y} \binom{Y}{j} (jS)^{1/2}(jS/e)^{jS}
\left(\frac{N}{\delta} \frac{\abs{2\pi/\delta}^{2S}(2.1\log\log{P})^S}{(2S/e)^{2S}}\right)^{\!j} \\
&\ll x \log{P}
\sum_{1\le j\le Y} \binom{Y}{j} (jS)^{1/2} \frac{(2.1\pi^2ej)^{jS}}{(S\delta^2)^{jS}}
\left(\frac{N}{\delta} (\log\log{P})^S\right)^{\!j}.
\end{split}
\end{equation*}
Since $j\le Y$, and $S\delta^2 \ge 99Y\log(N\log{P})$ by \eqref{define-S-in-Prop-1}, we get
\begin{equation*}
\begin{split}
O(1)^Y H_2
&\ll O(1)^Y (x \log{P}) 2^Y (YS)^{1/2}
\max_{1\le j\le Y} \left(\frac{N}{\delta} \frac{(\log\log{P})^S}
{(1.7\log(N\log{P}))^S}\right)^{\!j} \\
&\ll O(1)^Y (x\log{P}) S^{1/2}
\left(\frac{N/\delta}{1.7^S} + \left(\frac{N/\delta}{1.7^S}\right)^{\!Y}\right).
\end{split}
\end{equation*}
By Lemma~\ref{LEM:prop-1-numerics} below, $N/\delta, O(1)^Y, S, \log{P}\le 1.1^S$, so the last line is $\ll x (1.1^4/1.7)^S \le x/1.1^S \le x/(N\log{P})^{Y/\delta^2}$, because $99\log{1.1} \ge 1$.
This suffices, because $\delta^{-2} \ge 10$.
\end{proof}

\begin{lemma}
\label{LEM:prop-1-numerics}
Fix $A>0$.
Suppose $P,N,\delta^{-1}>0$ are large.
Let $Y\ge 1$.
Let $S = 100Y\floor{(1/\delta)^2 \log(N\log{P})}$.
Then $N/\delta, O(1)^Y, S, \log{P}\le 1.1^S$.
\end{lemma}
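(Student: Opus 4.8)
The plan is to reduce all four claimed inequalities to statements of the shape $\log(\text{quantity})\le S\log(1.1)$, and then verify each by an elementary estimate, exploiting the fact that $S$ is forced to be large in two independent ways. First I would set $t\defeq\floor{(1/\delta)^2\log(N\log P)}$, so that $S=100Yt$. Since $P,N,\delta^{-1}$ are large, the quantity $(1/\delta)^2\log(N\log P)$ is large; in particular $t$ is large, so $t\ge\tfrac12(1/\delta)^2\log(N\log P)$, and $S\ge 100t$ is large as well. I would also record once the crude numeric bound $\log(1.1)>\tfrac1{25}$.

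Next I would dispatch the three quantities that do not scale with $Y$. For $S\le 1.1^S$: since $S$ is large, $\log S\le S^{1/2}\le S\log(1.1)$. For $\log P\le 1.1^S$: from $N\ge 1$ we have $\log\log P\le\log(N\log P)\le 2t\le S/50<S\log(1.1)$. For $N/\delta\le 1.1^S$: using $\log P\ge 1$ (valid as $P$ is large), $\log w<w$ for $w>0$, and $\log(N\log P)\ge 1$, I would bound $\log(N/\delta)=\log N+\log(1/\delta)\le\log(N\log P)+(1/\delta)^2\le 2(1/\delta)^2\log(N\log P)\le 4t\le S/25<S\log(1.1)$. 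Finally, for $O(1)^Y\le 1.1^S$, let $C_0$ denote the implicit absolute constant; since $1.1^S=(1.1^{100t})^Y$, it suffices that $C_0\le 1.1^{100t}$, which holds because $t$ is large.

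I do not expect any genuine obstacle: the lemma is pure bookkeeping, its content being that $S$ is simultaneously at least $100t$ with $t$ as large as we wish (which absorbs $N/\delta$, $S$, and $\log P$) and proportional to $Y$ (which absorbs the fixed power $O(1)^Y$). The only point requiring a little care is to make explicit how large $P$, $N$, $\delta^{-1}$ must be for each threshold above; this is harmless, since in the application these parameters obey \eqref{key-prop-1-parameters}, where $\log P\asymp(\log x)^{1/6}$, $N\asymp\log\log P$, and $\delta^{-1}\asymp(\log P)^{1.3}$ all tend to infinity, so every threshold is eventually met. (The hypothesis ``Fix $A>0$'' is not actually used.)
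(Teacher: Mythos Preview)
Your proposal is correct and follows essentially the same route as the paper: take logarithms and check that each of $\log(N/\delta)$, $Y\log O(1)$, $\log S$, $\log\log P$ is at most $S\log(1.1)$, using only that $P,N,\delta^{-1}$ (hence $t$ and $S$) are large. The paper's proof is the same argument stated in one line, whereas you have spelled out explicit quantitative thresholds; your observation that the hypothesis ``Fix $A>0$'' is unused is also accurate.
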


\begin{proof}
This is clear logarithmically:
$$\log{N}+\log(1/\delta), Y\log{O(1)}, \log{S}, \log\log{P}\le S\log{1.1},$$
because $P,N,\delta^{-1},S$ are large.
\end{proof}

While \cite[Proposition~1]{harpertypical} would not suffice for our purposes below, because of its restrictions on $x$,
we note that \cite[Proposition~2]{harpertypical} will still apply unconditionally, without change, because its statement does not involve $x$ at all.

\section{Proof of Theorem~\ref{thm: ratio}}

\subsection{Overview}

In this section, we prove the following result,
which implies Theorem~\ref{thm: ratio}:
\begin{theorem}
\label{THM:detailed-implications}
Each of the following implies the next:
\begin{enumerate}
\item GRH for $\mathcal{F}_r$,
and the Ratios Conjecture~\ref{CNJ:R22} for $L\ol{L}/L\ol{L}$,
hold.

\item GRH for $\mathcal{F}_r$,
and the twisted Ratios Conjecture~\ref{CNJ:twisted-R2} for $\chi\ol{\chi}/L\ol{L}$,
hold.

\item Lemma~\ref{LEM:even-moment-estimate} holds.

\item Proposition~\ref{prop: key} holds.

\item Fix a constant $A>0$.
Let $10\le x\le r^{A}$ and $q\in [0,1]$.
Then $$\EC |\sum_{1\le n \le x} \lambda(n)\chi(n)|^{2q}
\ll \Big(\frac{x}{1+(1-q)(\log\log{x})^{1/2}}\Big)^q.$$
\end{enumerate}
\end{theorem}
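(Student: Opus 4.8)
The plan is to deduce item~(5) from Proposition~\ref{prop: key} by running Harper's argument from \cite[\S\S~3.3--3.4]{harpertypical} essentially verbatim, being careful only about the places where our error terms differ from his. Since the cases $q\le 1/2$ follow from the case $q=1/2$ by H\"older's inequality (together with the trivial $L^2$ bound $\EC|\sum_{n\le x}\lambda(n)\chi(n)|^2\le x$), the real content is the range $q\in(1/2,1]$, and even there the endpoint $q=1$ is trivial, so we should think of $q$ as bounded away from $1$ only insofar as the claimed bound degrades to $O(x^q)$ as $q\to 1$. We may also assume $x$ is large, since for bounded $x$ the statement is trivial.

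First I would fix the key parameters as in \eqref{key-prop-1-parameters}: $\log P\asymp(\log x)^{1/6}$, $Y\asymp(\log P)^{1.02}$, $N\asymp\log\log P$, $\delta\asymp(\log P)^{-1.3}$, and $Q$ a small power of $x$ of the form $Q=e^{(\log x)^{1/6}}$ or so (in any case $Q\le e^{(\log x)^{1/3}}$, and large enough that the primes up to $P$ are all $\le Q$, which will let us factor out the $P(n)>Q$ part cleanly). One checks, exactly as Harper does, that conditions~\eqref{technical-prop-1-condition-1} and~\eqref{technical-prop-1-condition-2} hold for these choices, so Proposition~\ref{prop: key} applies and the error term there is $O(x/(N\log P)^{10Y})$, which is $\le x/(\log x)^{100}$, say — negligible. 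Next, summing \eqref{key-prop-goal} over the indices $j(1),\dots,j(Y)\in[-N,N+1]$ against suitable weights, and using the partition of unity $\sum_{|j|\le N}g_j+g_{N+1}=1$ from Lemma~\ref{LEM: APPROX}(i), one converts the low-moment quantity $\EC|\sum_{n\le x}\lambda(n)\chi(n)|^{2q}$ into a weighted sum over $\bm j$ of the random quantities $\E_f G_{f,\bm j}(\bm a)|\sum_{n\le x,\,P(n)>Q}\lambda(n)f(n)|^2$, up to an acceptable error; here one takes $a_i(p)=1$ for all $i,p$, so that the argument of each $g_{j(i)}$ is the same partial Euler sum $\Re\sum_{p\le P}(f(p)p^{-1/2}+f(p^2)p^{-1})$, and the $g_j$'s are being used to localize the size of $\mathrm{Re}\log\prod_{p\le P}(1-f(p)p^{-1/2})^{-1}$ into dyadic-type bands indexed by $j$.

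At this point the problem has been reduced entirely to the Steinhaus random multiplicative function side, and I would invoke \cite[Proposition~2]{harpertypical} — which, as the excerpt notes, applies unconditionally and without change since its statement does not involve $x$ — to control $\E_f G_{f,\bm j}(\bm a)|\sum_{n\le x,\,P(n)>Q}\lambda(n)f(n)|^2$ in terms of the "$P(n)>Q$, Euler-product-conditioned" second moment. Harper's Proposition~2 is precisely the mechanism that produces the crucial $1/\log\log x$-type saving via the multiplicative chaos / low-moments phenomenon from \cite{HarperLow}: after summing the bands over $\bm j$ and optimizing, one gets $\sum_{\bm j}(\text{weight})\cdot\E_f G_{f,\bm j}|\cdots|^2 \ll x/(1+(1-q)(\log\log x)^{1/2})$ raised appropriately, which is exactly the target bound in item~(5). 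The bookkeeping — choosing the weights attached to each $g_j$ so that the reassembled sum reproduces the $2q$-th moment and matches the error budget $x/(N\log P)^{10Y}$ — is the routine but lengthy part, and I would follow \cite[\S~3.4]{harpertypical} line by line, noting only that because of difference~(3) in the list before Proposition~\ref{prop: key} (the random side carries the constraint $P(n)>Q$ rather than $P(n)>x^{1/\log\log x}$) one must take the auxiliary parameter $X$ slightly smaller than Harper's $e^{(\log x)^{1/2}}$, as flagged there; this costs nothing in the final exponent.

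The main obstacle, and the only genuinely new point, is ensuring that the error terms introduced by passing through the Ratios Conjecture — namely the $O(x/(N\log P)^{10Y})$ of Proposition~\ref{prop: key}, which is weaker than Harper's $O(x/(N\log P)^{Y/\delta^2})$ (difference~(4)) — are still small enough to be swallowed. With $\delta\asymp(\log P)^{-1.3}$ one has $1/\delta^2\asymp(\log P)^{2.6}$, vastly larger than $10$, so Harper's error margin was enormously generous and the degradation to exponent $10Y$ is harmless: $(N\log P)^{10Y}$ still grows faster than any power of $\log x$ because $Y\asymp(\log P)^{1.02}\asymp(\log x)^{0.17}\to\infty$, so $x/(N\log P)^{10Y}$ beats every $x/(\log\log x)^{C}$ with room to spare. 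One just has to re-examine each place in \cite[\S~3.4]{harpertypical} where the exponent $Y/\delta^2$ was used, replace it by $10Y$, and confirm the inequality still closes; since in every such place the competing main term is of order $x/(\log\log x)^{O(1)}$, this is immediate. The upshot is that assuming Proposition~\ref{prop: key}, item~(5) follows by Harper's method with only cosmetic changes, completing the chain of implications in Theorem~\ref{THM:detailed-implications}.
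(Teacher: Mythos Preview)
Your overall plan is the right one, and your error-budget analysis for the degraded exponent $10Y$ is correct. But there is a genuine gap in the conditioning step: you set $a_i(p)=1$ for all $i$, so that every $g_{j(i)}$ sees the \emph{same} argument $\Re\sum_{p\le P}(f(p)p^{-1/2}+f(p^2)p^{-1})$. This is not what the argument requires, and with this choice the later passage to Euler products cannot close.

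In the paper (following Harper), the $Y=2M+1$ factors in $G_{f,\bm j}$ carry \emph{different} twists: one defines
\[
S_k(f,c)=\Re\sum_{p\le P}\Big(\frac{c(p)f(p)}{p^{1/2+ik/\log^{1.01}P}}+\frac{c(p)^2f(p)^2}{p^{1+2ik/\log^{1.01}P}}\Big),\qquad |k|\le M,
\]
i.e.\ $a_k(p)=c(p)\,p^{-ik/\log^{1.01}P}$. The whole point of having $2M+1$ factors is to localize $\Re\log F_P^{\text{rand}}(1/2+it)$ simultaneously at a mesh of $t$-values $t_k=k/\log^{1.01}P$, so that after the Perron/Parseval step one can control the integral $\int_\R |F_P^{\text{rand}}(1/2+it)|^2\,|1/2+it|^{-2}\,dt$ appearing in \eqref{harper-(3.4)}. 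With $a_i\equiv 1$ you only condition at $t=0$, the product $\prod_i g_{j(i)}$ collapses to a single-variable object, and there is no handle on $|F_P^{\text{rand}}(1/2+it)|$ away from $t=0$; the multiplicative-chaos saving (which lives in that $t$-integral) is then inaccessible.

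Two smaller points. First, the H\"older step is not optional window-dressing: one inserts the partition of unity, then applies H\"older \emph{inside each $\bm j$-block} to replace $|\cdot|^{2q}$ by $(|\cdot|^2)^q$, and only then is Proposition~\ref{prop: key} applicable (it is a statement about second moments). Second, the role of \cite[Proposition~2]{harpertypical} is not to produce the $(\log\log x)^{-1/2}$ saving; it is used to replace the deterministic conditioning weight $\sigma(\bm j)^{1-q}$ by its random counterpart $\sigma^{\text{random}}(\bm j)^{1-q}$, so that the problem becomes purely random. The saving itself comes only at the very end, from the Euler-product estimate \cite[(3.5)]{harpertypical} applied to \eqref{harper-(3.4)}.
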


We emphasize that the implied constant in (5) may depend on the (fixed) constant $A$.
Also, the Liouville function $\lambda$ has the convenient property that
multiplication by $\lambda$ preserves the Steinhaus distribution.
(This is used in \eqref{eqn: part} below.)
However, following \cite[\S~4.4]{harpertypical}, it should be possible to treat the M\"{o}bius function $\mu$ with more work.

The implications (1)$\Rightarrow$(2),
(2)$\Rightarrow$(3),
(3)$\Rightarrow$(4)
have already been proven in the previous section.
Therefore, it only remains to prove the implication (4)$\Rightarrow$(5).
The key tool is Proposition~\ref{prop: key},
which helps us to pass the study of the deterministic side to the random side.
Let $c(n)\defeq \lambda(n)$.
If $x\ll_A 1$, then (5) is trivial, so we may assume $x$ is large in terms of $A$.
By using H\"{o}lder's inequality, it is sufficient to  establish that for all $q\in [2/3,1]$,
\[\EC |\sum_{1\le n \le x} c(n)\chi(n) |^{2q}
\ll \Big(\frac{x}{1+(1-q)(\log\log{x})^{1/2}}\Big)^q. \]

\subsection{The conditioning argument}
We first do the ``conditioning" argument. More precisely, we use the idea from \cite{harpertypical}, using a partition function to split the character sum into several pieces, based on values taken by certain character sums over primes. 
The proof is almost identical to \S~3.2 in \cite{harpertypical}. The only difference is that we do not restrict ourselves to the integers with an extra condition $P(n)>Q$ here. 
(The condition $P(n)>Q$ will still eventually come up,
once we apply Proposition~\ref{prop: key}.)

In analogy to $S_k(\chi)$ from \cite[\S~3.1]{harpertypical}, we define 
\[S_k(f, c) \defeq \Re \sum_{p\le P} \left( \frac{f(p)c(p)}{p^{1/2+ik/\log^{1.01}P}} +  \frac{f(p)^{2}c(p)^{2}}{2p^{1+ 2ik/\log^{1.01}P}}  \right) = S_k(fc,1) \]
for all $|k|\le M \defeq 2\log^{1.02}P\in \Z$.
(This is a truncated Taylor expansion of $\log{\abs{L(s,fc)^{-1}}}$; cf.~\cite[\S~3.6]{harpertypical}.)
Let $\bm{j}$ be a $2M+1$-vector and $W$ be a function of $\chi$.  
We next apply the approximation result (Lemma~\ref{LEM: APPROX}) to rewrite the sum $\EC |\sum_{1\le n \le x} c(n)\chi(n) |^{2q}$ as 
\begin{align*}
& \EC |\sum_{1\le n \le x} c(n)\chi(n) |^{2q}
\prod_{i=-M}^{i=M} \sum_{j=-N}^{N+1} g_j(S_i(\chi, c)) \\
&= \sum_{-N\le j(-M),\dots, j(M)\le N+1} \EC |\sum_{1\le n \le x} c(n)\chi(n) |^{2q}
\prod_{i=-M}^{M} g_{j(i)}(S_i(\chi, c)) \\
& = \sum_{-N\le j(-M),\dots, j(M)\le N+1} \sigma(\bm{j}) \ECJ |\sum_{1\le n \le x} \chi(n)c(n)|^{2q},
\end{align*}
where we use the notation 
\[
\sigma(\bm{j}) \defeq \EC\prod_{i=-M}^{M} g_{j(i)}(S_i(\chi, c)),
\quad \ECJ W  \defeq \frac{\bm{1}_{\sigma(\bm{j})\ne 0}}{\sigma(\bm{j})}\,
\EC W \prod_{i=-M}^{M} g_{j(i)}(S_i(\chi, c)). \]
(Note that $g_{j(i)} \ge 0$, so if $\sigma(\bm{j})=0$, then $\EC W \prod_{i=-M}^{M} g_{j(i)}(S_i(\chi, c)) = 0$ for all $W$.)
We next apply the H\"older inequality to $\ECJ$ and conclude that 
\begin{equation}\label{eqn: whole}
\EC |\sum_{1\le n \le x} \chi(n)c(n) |^{2q}
\le \sum_{-N\le j(-M),\dots,j(M) \le N+1} \sigma(\bm{j})
\left(\EC^{\bm{j}} | \sum_{1\le n \le x} \chi(n)c(n) |^{2}  \right)^{\!q}.
\end{equation}

\subsection{Passing to the random side}
\label{SUBSEC:pass-to-random}

We next use Proposition~\ref{prop: key} to pass to the random case.
Let $Y \defeq 2M+1 = 1+4\log^{1.02}P$ and $Q=Q(x) \defeq e^{(\log{x})^{1/3}}$.
Let
\begin{equation}
\label{P-X-relationship}
P \defeq \max\{P_0\le \exp((\log{x})^{1/6}): \log^{0.01}P_0\in \Z\},
\end{equation}
in analogy with \cite[\S~3.1]{harpertypical}.
Let
\begin{equation*}
N \defeq \ceil{1.2\log\log{P}},
\quad \delta \defeq (\log{P})^{-1.3},
\end{equation*}
as in \cite[\S~3.6]{harpertypical}.
Then in particular, $P,Y,N,\delta$ satisfy \eqref{key-prop-1-parameters}.
In fact,
\begin{equation*}
\begin{split}
\max(P,\delta^{-1},N)^{400(Y/\delta)^2 \log(N\log{P})}
&\le P^{400 (5\log^{2.32}{P})^2 (2\log\log{P})}
\le e^{(\log{P})^{5.65}}
\le r^\ee, \\
20Y\log(N\log{P})
&\le 100(\log{P})^{1.02}(2\log\log{P})
\le (\log{P})^{1.03}
\le (\log{x})^{1/2},
\end{split}
\end{equation*}
since $5.65/6 < 1$ (and $x$ is large in terms of $A$)
and $1.03/6 < 1/2$.
Thus the requirements in \eqref{technical-prop-1-condition-1} and \eqref{technical-prop-1-condition-2} hold, so we may apply Proposition~\ref{prop: key} to get
\begin{equation}
\begin{split}
\label{eqn: part}
&\EC^{\bm{j}} | \sum_{1\le n \le x} \chi(n)c(n) |^{2} \\
&= \frac{\bm{1}_{\sigma(\bm{j})\ne 0}}{\sigma(\bm{j})}
\left(  \E \prod_{i=-M}^{M} g_{j(i)} (S_i(f,c)) \Big| \sum_{\substack{n\le x \\ P(n)>Q  }} f(n)c(n) \Big|^{2}
+ O\left( \frac{x}{(N \log P)^{10Y}} \right) \right) \\
&= \frac{\bm{1}_{\sigma(\bm{j})\ne 0}}{\sigma(\bm{j})}
\left(  \E \prod_{i=-M}^{M} g_{j(i)} (S_i(f,1)) \Big| \sum_{\substack{n\le x \\ P(n)>Q  }} f(n) \Big|^{2}
+ O\left( \frac{x}{(N \log P)^{10Y}} \right) \right),
\end{split}
\end{equation}
where in the last step we replace $fc$ with $f$,
noting that $S_i(f,c)=S_i(fc,1)$.
We emphasize that $fc$ has the same distribution as $f$,
because $c=\lambda$ is completely multiplicative and $f$ is Steinhaus.

We note that our $Q$ is smaller than Harper's $x^{1/\log\log{x}}$, and that our error term is worse, with $10Y$ instead of $Y/\delta^2$.
Thus, we will focus our exposition on aspects where we lose something compared to Harper, to clarify why we still win overall.

We next notice that $\sum_{\bm{j}}\sigma(\bm{j})=1$ and apply the H\"{o}lder inequality to the $\sigma(\bm{j})$.
This lets us conclude that the contribution from the ``big Oh" term in \eqref{eqn: part} to \eqref{eqn: whole} is at most 
\[\left(\sum_{-N\le j(-M),\dots,j(M) \le N+1} \sigma(\bm{j})
\frac{\bm{1}_{\sigma(\bm{j})\ne 0}}{\sigma(\bm{j})} \frac{x}{(N\log P)^{10Y}} \right)^q
\le \left(\frac{x (2N+2)^Y}{(N\log{P})^{10Y}}\right)^q
\ll \Big(\frac{x}{\log{P}}\Big)^q,\]
which is $\ll (\frac{x}{1+(1-q)(\log\log{P})^{1/2}})^q$.
Thus, we can ignore the ``big Oh" term in \eqref{eqn: part}. 

We next define 
\[\sigma^{\text{random}}(\bm{j}) \defeq \E \prod_{i = -M}^{M} g_{j(i)}(S_i(f, 1))  \]
for all $(2M+1)$-vectors $\bm{j}$ where $f$ is a Steinhaus random multiplicative function.
We then use \cite[Proposition~2]{harpertypical} to get that 
$\sigma(\bm{j})^{1-q} \ll \sigma^{\text{random}}(\bm{j})^{1-q} 
+ \Big(\frac{1}{(N\log P)^{(2M+1)(1/\delta)^{2}}}\Big)^{1-q}$. This implies that 
\[\sum_{-N\le j(-M),..,j(M)\le N+1} \sj \left(\frac{\bm{1}_{\sj\ne 0}}{\sj}\,
\E  \prod_{i=-M}^{M} g_{j(i)} (S_i(f, 1)) \Big| \sum_{\substack{n\le x \\ P(n)>Q  }} f(n) \Big|^{2} \right)^{q}
\ll T_1 + T_2,\]
where
\begin{equation*}
\begin{split}
T_1 &\defeq \sum_{\bm{j}} \sjr \left(\frac{\bm{1}_{\sjr\ne 0}}{\sjr}\,
\E  \prod_{i=-M}^{M} g_{j(i)} (S_i(f, 1)) \Big| \sum_{\substack{n\le x \\ P(n)>Q  }} f(n) \Big|^{2} \right)^{q}, \\
T_2 &\defeq \Big(\frac{1}{(N\log P)^{(2M+1)(1/\delta)^{2}}}\Big)^{1-q} \sum_{\bm{j}} \left(  \E  \prod_{i=-M}^{M} g_{j(i)} (S_i(f, 1)) \Big| \sum_{\substack{n\le x \\ P(n)>Q  }} f(n) \Big|^{2} \right)^{q},
\end{split}
\end{equation*}
where $\sum_{\bm{j}} \defeq \sum_{-N\le j(-M),...,j(M)\le N+1}$.
To deal with
$T_2$, we apply H\"older's inequality to the sum over $\bm{j}$ and use the fact that $g_j$ form a partition of unity. This gives us
\[
\begin{split}
T_2  & \ll  \Big(\frac{1}{(N\log P)^{(2M+1)(1/\delta)^{2}}}\Big)^{1-q} ((2N+2)^{2M+1})^{1-q} \left( \sum_{\bm{j}}  \E  \prod_{i=-M}^{M} g_{j(i)} (S_i(f, 1)) \Big| \sum_{\substack{n\le x \\ P(n)>Q  }} f(n) \Big|^{2} \right)^{q}  \\
& = \left( \Big( \frac{2N+2}{(N\log P)^{(1/\delta)^{2}}} \Big)^{2M+1} \right)^{1-q} \left( \E \Big|\sum_{\substack{n\le x \\ P(n)>Q  }} f(n) \Big|^{2} \right)^{q}
\le (\log{P})^{-(1-q)} x^q,
\end{split}\]
which is again $\ll (\frac{x}{1+(1-q)(\log\log{P})^{1/2}})^q$.
In summary, we can ignore the contribution from $T_2$.
If we define the notation $\E^{\bm{j}, \text{rand}} W
\defeq \frac{\bm{1}_{\sjr\ne 0}}{\sjr}\,
\E W \prod_{i=-M}^{M} g_{j(i)} (S_i(f, 1))$ for all random variables $W$,
then by \eqref{eqn: whole} and \eqref{eqn: part}, it suffices to show that  
\begin{equation}
\label{purely-rmf-but-with-Q}
T_1 =
\sum_{\bm{j}}
\sjr \left(\ejr \Big| \sum_{\substack{n\le x \\ P(n)>Q  }} f(n) \Big|^{2}  \right)^{q}
\ll \Big(\frac{x}{1+(1-q)(\log\log{P})^{1/2}}\Big)^q,
\end{equation}
in order to complete the proof of Theorem~\ref{THM:detailed-implications}.
The point is that from now on we only need to focus on
$T_1$,
which is purely about random multiplicative functions.

\subsection{Passing to Euler products}
\label{SUBSEC:pass-to-Euler}
Recall that $Q(x)= e^{(\log{x})^{1/3}}$. 
Let $X = e^{(\log{x})^{1/100}}$.
In the corresponding section \cite[\S~3.4]{harpertypical},
Harper uses a standard sieve bound \cite[Theorem~3.6]{MV2007} several times in the course of the argument.
Since our parameters are slightly different than Harper's, it is worth spelling these out more explicitly.

\begin{lemma}
[Applied sieve bounds]
\label{LEM:applied-sieve-bounds}
We have
\begin{equation*}
\sum_{\substack{Q < m \le x \\ p\mid m\Rightarrow p>P}} \frac{X}{m}
\int_{m}^{(1+1/X)m} \left(\frac{x}{m} - \frac{x}{t} + 1\right) \, dt
\ll \frac{x}{\log{P}}.
\end{equation*}
Also, for $t\ge Q$, we have
\begin{equation*}
\sum_{\substack{t/(1+1/X) < m \le t \\ p\mid m\Rightarrow p>P}} \frac{X}{m}
\ll (\log{P})^{-1}.
\end{equation*}
\end{lemma}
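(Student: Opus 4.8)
The plan is to reduce both estimates to the standard upper-bound sieve for integers with no small prime factors, in the form of \cite[Theorem~3.6]{MV2007}: for $1 \le w \le z$,
\[
\#\{m \le z : p \mid m \Rightarrow p > w\} \ll \frac{z}{\log{w}},
\]
and, more usefully in weighted form, $\sum_{\substack{m \le z \\ p\mid m \Rightarrow p>w}} 1 \ll z/\log{w}$ uniformly. For the second displayed bound, I would simply observe that the inner sum $\sum_{\substack{t/(1+1/X)<m\le t \\ p\mid m\Rightarrow p>P}} X/m$ is $\asymp (X/t)\cdot \#\{m \in (t/(1+1/X),\,t] : p\mid m\Rightarrow p>P\}$, since all $m$ in that dyadic-type window satisfy $m \asymp t$. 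The count of such $m$ is at most the count of \emph{all} integers up to $t$ free of prime factors $\le P$, namely $\ll t/\log P$; actually one expects to do better using the short interval of length $t/X$, but the crude bound $\ll t/\log P$ already gives $(X/t)\cdot t/\log P = X/\log P$, which is too lossy. The right move is to note $t \ge Q = e^{(\log x)^{1/3}}$ while $X = e^{(\log x)^{1/100}}$, so $t/X \ge Q^{1/2}$, say, for $x$ large; then the short-interval sieve bound $\#\{m\in(t/(1+1/X),t]: p\mid m\Rightarrow p>P\} \ll (t/X)/\log P$ (valid since the interval length $t/X$ is much larger than any fixed power and certainly $\ge P$) yields the inner sum $\ll (X/t)(t/X)/\log P = 1/\log P$, as claimed.

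For the first displayed bound, I would first bound the inner $t$-integral crudely. On the range $m < t \le (1+1/X)m$ we have $0 \le x/m - x/t \le x(t-m)/(mt) \le x/(Xm)$ and the integrand is at most $x/(Xm) + 1$, so
\[
\frac{X}{m}\int_m^{(1+1/X)m}\Bigl(\frac{x}{m}-\frac{x}{t}+1\Bigr)\,dt \ll \frac{X}{m}\cdot\frac{m}{X}\Bigl(\frac{x}{Xm}+1\Bigr) = \frac{x}{Xm}+1.
\]
Hence the whole left-hand side is $\ll \sum_{\substack{Q<m\le x \\ p\mid m\Rightarrow p>P}}\bigl(\frac{x}{Xm}+1\bigr)$. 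The ``$+1$'' contributes $\#\{m\le x: p\mid m\Rightarrow p>P\} \ll x/\log P$ by the sieve bound, which is acceptable. For the main piece $\frac{x}{X}\sum_{\substack{Q<m\le x\\ p\mid m\Rightarrow p>P}} \frac1m$, I would break the sum dyadically over $m \in (2^{j}Q, 2^{j+1}Q]$ and apply the sieve bound to each block: $\sum_{\substack{2^j Q < m \le 2^{j+1}Q \\ p\mid m\Rightarrow p>P}} \frac1m \ll \frac{1}{2^j Q}\cdot \frac{2^j Q}{\log P} = \frac{1}{\log P}$, and there are $\ll \log x$ such blocks, giving $\sum \frac1m \ll \frac{\log x}{\log P}$. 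Therefore this piece is $\ll \frac{x\log x}{X\log P}$, and since $X = e^{(\log x)^{1/100}}$ grows faster than any power of $\log x$, we have $\frac{\log x}{X} \ll 1$ (indeed $\to 0$), so the piece is $\ll x/\log P$, completing the bound.

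The main obstacle I anticipate is bookkeeping the uniformity in the sieve bound: \cite[Theorem~3.6]{MV2007} requires the sifting parameter to be not too large relative to the length of the interval, so I must check that $P$ (which, in the eventual application, is $\log P \asymp (\log x)^{1/6}$, hence $P = e^{(\log x)^{1/6}}$) is small enough compared to the relevant interval lengths — the full range $(Q,x]$ in the first estimate and the short window of length $t/X$ in the second. In the first estimate this is automatic since we sieve integers up to $x$ by primes $\le P \ll x^{o(1)}$. In the second, the window length is $t/X \ge Q/(2X) = e^{(\log x)^{1/3} - (\log x)^{1/100}}/2$, which is far larger than $P = e^{(\log x)^{1/6}}$ for large $x$, so the sieve applies with room to spare. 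No other step presents real difficulty; everything else is the elementary manipulation of the $t$-integral and dyadic summation sketched above.
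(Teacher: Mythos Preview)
Your proposal is correct and follows essentially the same approach as the paper: bound the integrand by $x/(Xm)+1$, handle the $+1$ part via the upper-bound sieve \cite[Theorem~3.6]{MV2007}, absorb the $x/(Xm)$ part using the rapid growth of $X$, and treat the second display by the short-interval sieve after checking that the window length $t/X$ dominates $P^2$. The only cosmetic difference is that for the $\sum 1/m$ piece you dyadically decompose and apply the sieve blockwise to get $\ll (\log x)/\log P$, whereas the paper simply uses the trivial $\sum_{m\le x} 1/m \ll \log x$; both are swallowed by $X$.
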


\begin{proof}
In the first display, we have
$\frac{x}{m} - \frac{x}{t} + 1 = \frac{(t-m)x}{mt} + 1 \ll \frac{x}{mX} + 1$.
Therefore, we only need to show that 
\[\sum_{\substack{Q < m \le x\\ p\mid m\Rightarrow p>P}} \left(1+ \frac{x}{mX}\right) \ll  \frac{x}{\log P}. \]
Notice that sum involving $x/Xm$ is crudely bounded by $\ll x\log x/X$ which is more than acceptable by our choice of $X$ and $P$.
For the other term, we just use the upper bound sieve \cite[Theorem~3.6]{MV2007}, noting here $P \ll \sqrt{x-Q}$ by \eqref{key-prop-1-parameters}, to conclude that this is $\ll x \prod_{p\le P}(1-p^{-1}) \ll x/\log P$.

We now turn to the second display. We have the bound
\[\sum_{\substack{t/(1+1/X) < m \le t \\ p\mid m\Rightarrow p>P}} \frac{X}{m} \ll \frac{X}{t}\sum_{\substack{t/(1+1/X) < m \le t \\ p\mid m\Rightarrow p>P}} 1.\]
By using the upper bound sieve again,
noting that $P \ll \sqrt{t - t/(1+1/X)}$ by \eqref{key-prop-1-parameters},
the inner sum is $\ll \frac{t}{X} \prod_{p\le P}(1-p^{-1})\ll \frac{t}{X\log P}$ and the conclusion follows. 
\end{proof}

We now return to our goal, \eqref{purely-rmf-but-with-Q}.
Expanding the square in $T_1$ and noticing that $g_{j(i)}(S_i(f, 1)) $ only depends on $f(p)$ for $p\le P$,
we get
\begin{equation}\label{eqn: euler}
T_1
=   \sum_{\bm{j}}\sjr \left(\ejr \sum_{\substack{m\le x\\ P(m)>Q\\ p\mid m \Rightarrow p>P}} \Big| \sum_{\substack{n\le x/m \\ n~\text{is $P$-smooth} }} f(n) \Big|^{2}  \right)^{q} .  
\end{equation}
Note that we have used $P<Q$ above. The next step is to replace the discrete sum with a smooth version. 
Replacing the $P(m)>Q$ by the weaker condition $m>Q$, we find that the above bracket term, $(\ejr \sum \cdots)^q$, in \eqref{eqn: euler} is
\begin{equation}\label{eqn: 1}
\begin{split}
      \ll & \left(\ejr \sum_{\substack{Q<m\le x\\  p|m \Rightarrow p>P}} \frac{X}{m} \int_{m}^{m(1+1/X)} \Big| \sum_{\substack{n\le x/t \\ n~\text{is $P$-smooth} }} f(n) \Big|^{2}  dt \right)^{q}   \\
& + \left(\ejr \sum_{\substack{Q<m\le x\\  p|m \Rightarrow p>P}} \frac{X}{m} \int_{m}^{m(1+1/X)} \Big| \sum_{\substack{x/t < n\le x/m \\ n~\text{is $P$-smooth} }} f(n) \Big|^{2}  dt \right)^{q}. 
\end{split} 
\end{equation} 

We first estimate the contribution of the second term in \eqref{eqn: 1} to \eqref{eqn: euler}, which is, via H\"older's inequality (using $\sum_{\bm{j}} \sjr = 1$),
\[\begin{split}
& \ll  \left(\sum_{\bm{j}} \sjr \ejr \sum_{\substack{Q<m\le x\\  p|m \Rightarrow p>P}} \frac{X}{m} \int_{m}^{m(1+1/X)} \Big| \sum_{\substack{x/t < n\le x/m \\ n~\text{is $P$-smooth} }} f(n) \Big|^{2}  dt \right)^{q} \\ 
& = \left(\E \sum_{\substack{Q<m\le x\\  p|m \Rightarrow p>P}} \frac{X}{m} \int_{m}^{m(1+1/X)} \Big| \sum_{\substack{x/t < n\le x/m \\ n~\text{is $P$-smooth} }} f(n) \Big|^{2}  dt \right)^{q},
\end{split}
\]
by the definition of $\ejr$.
Next we use the orthogonality to expand the square and the above is $\ll (\frac{x}{\log P})^{q}$ by applying Lemma~\ref{LEM:applied-sieve-bounds}.
This is $\ll (\frac{x}{1+(1-q)(\log\log{P})^{1/2}})^q$.

We next estimate the contribution from the first term in \eqref{eqn: 1} to \eqref{eqn: euler}. 
This is
\[
\begin{split}
    & \ll \sum_{\bm{j}} \sjr \left(\ejr  \int_{Q}^{x} \Big| \sum_{\substack{n\le x/t \\ n~\text{is $P$-smooth} }} f(n) \Big|^{2}  \sum_{\substack{t/(1+1/X)<m\le t\\  p|m \Rightarrow p>P}} \frac{X}{m}dt \right)^{q} \\
    & \ll  \sum_{\bm{j}} \sjr \Big(\frac{1}{\log P}\Big)^{q} \left(\ejr  \int_{Q}^{x} \Big| \sum_{\substack{n\le x/t \\ n~\text{is $P$-smooth} }} f(n) \Big|^{2}  dt \right)^{q}\\
    & =  \sum_{\bm{j}} \sjr \Big(\frac{x}{\log P}\Big)^{q} \left(\ejr  \int_{1}^{x/Q} \Big| \sum_{\substack{n\le z\\ n~\text{is $P$-smooth} }} f(n) \Big|^{2}  \frac{dz}{z^{2}} \right)^{q},
\end{split}
\]
where in the second line we applied Lemma~\ref{LEM:applied-sieve-bounds} and in the last line we used $z \defeq x/t$.
Then we can apply \cite[Harmonic Analysis Result 1]{harpertypical} to bound the last line by
\begin{equation}
\label{harper-(3.4)}
\ll \Big(\frac{x}{\log{P}}\Big)^q
\sum_{\bm{j}} \sjr \Big(\ejr \int_\R \frac{\abs{F_P^{\text{rand}}(1/2+it)}^2}
{\abs{1/2+it}^2} \,dt\Big)^q,
\end{equation}
where
\begin{equation}
\label{compatible-euler-product}
F_P^{\text{rand}}(s) \defeq \prod_{p\le P} (1 - \frac{f(p)}{p^s})^{-1}.
\end{equation}
Note that $F_P^{\text{rand}}(s)$ is a truncated version of $L(s,f)$.

The quantity in \eqref{harper-(3.4)} is of the form $(\frac{x}{\log{P}})^q\, \mathcal{R}_1$.
Similarly, the quantity in \cite[(3.4)]{harpertypical} is of the form $(\frac{x}{\log{P}})^q\, \mathcal{R}_2$.
The relationship \eqref{P-X-relationship} between $P$ and $x$ for us is in general different than in \cite{harpertypical}, if $\log_r{x}$ is large enough.\footnote{If $r\ge x^2$, then our definition of $P=P(x)$ in \eqref{P-X-relationship} agrees with
Harper's definition of $P=P(x,r)$ in \cite[\S~3.1]{harpertypical}.
In addition, \eqref{harper-(3.4)} only involves $x,P,N,\delta$, not $r$.
Therefore, one way to proceed would be to invoke Harper's work on \cite[(3.4)]{harpertypical} for primes $r\ge x^2$.
But that may be more confusing than necessary, so we instead proceed by focusing only on the parameters $P,N,\delta$, independently of $x$ and $r$.}
However, because our choice of $N,\delta$ (in terms of $P$) is identical to Harper's,
and our Euler product $F_P^{\text{rand}}(s)$ is identical by \eqref{compatible-euler-product},
we have $\mathcal{R}_1 = \mathcal{R}_2$ as functions of $P$.
Moreover, we chose $P$ so that $\log^{0.01}{P}\in \N$, as in \cite[\S~3]{harpertypical}.
Therefore, by \cite[(3.5) and the paragraph before it]{harpertypical}, we have
\begin{equation}
\begin{split}
\label{endgame}
\mathcal{R}_1
&\defeq \sum_{\bm{j}} \sjr \Big(\ejr \int_\R \frac{\abs{F_P^{\text{rand}}(1/2+it)}^2}
{\abs{1/2+it}^2} \,dt\Big)^q \\
&\ll \Big(\frac{\log{P}}{1+(1-q)(\log\log{P})^{1/2}}\Big)^q.
\end{split}
\end{equation}
We emphasize that \eqref{endgame} really only involves $P,N,\delta$, not $x,r$.
The inequality \eqref{endgame} holds in general under the conditions
(as discussed in \cite[\S~3.1, \S~3.6]{harpertypical})
\begin{equation*}
\log^{0.01}{P}\in \N,
\quad N\ge 1.2\log\log{P},
\quad \delta\le \frac{1}{(\log{P})^{1.2}(\log\log{P})^{0.5}N}.
\end{equation*}
Plugging \eqref{endgame} into \eqref{harper-(3.4)},
and recalling our bounds for $T_1$ based on \eqref{eqn: euler} and \eqref{eqn: 1},
we conclude that \eqref{purely-rmf-but-with-Q} holds.
This completes the proof of Theorem~\ref{THM:detailed-implications}.

\bibliographystyle{plain}
\bibliography{main}{}

\begin{thebibliography}{10}

\bibitem{joni1}
A.~Balog, A.~Granville, and K.~Soundararajan.
\newblock Multiplicative functions in arithmetic progressions.
\newblock {\em Ann. Math. Qu\'{e}.}, 37(1):3--30, 2013.

\bibitem{bergstrom2023hyperelliptic}
J.~Bergstr{\"o}m, A.~Diaconu, D.~Petersen, and C.~Westerland.
\newblock Hyperelliptic curves, the scanning map, and moments of families of quadratic {$L$}-functions.
\newblock {\em {\tt arXiv:2302.07664}}, 2023.

\bibitem{bettin2020averages}
S.~Bettin and J.~B. Conrey.
\newblock Averages of long {D}irichlet polynomials.
\newblock {\em Riv. Math. Univ. Parma (N.S.)}, 12(1):1--27, 2021.

\bibitem{BGW2024forthcoming}
T.~Browning, J.~Glas, and V.~Y. Wang.
\newblock Sums of three cubes over a function field.
\newblock {\em {\tt arXiv:2402.07146}}, 2024.

\bibitem{bui2023negative}
H.~M. Bui and A.~Florea.
\newblock Negative moments of the {R}iemann zeta-function.
\newblock {\em J. Reine Angew. Math.}, 806:247--288, 2024.

\bibitem{bui2021ratios}
H.~M. Bui, A.~Florea, and J.~P. Keating.
\newblock The {R}atios {C}onjecture and upper bounds for negative moments of {$L$}-functions over function fields.
\newblock {\em Trans. Amer. Math. Soc.}, 376(6):4453--4510, 2023.

\bibitem{caich2024random}
R.~Caich.
\newblock Random multiplicative functions and typical size of character in short intervals.
\newblock {\em {\tt arXiv:2402.06426}}, 2024.

\bibitem{conrey2005integral}
J.~B. Conrey, D.~W. Farmer, J.~P. Keating, M.~O. Rubinstein, and N.~C. Snaith.
\newblock Integral moments of {$L$}-functions.
\newblock {\em Proc. London Math. Soc. (3)}, 91(1):33--104, 2005.

\bibitem{conrey2008autocorrelation}
J.~B. Conrey, D.~W. Farmer, and M.~R. Zirnbauer.
\newblock Autocorrelation of ratios of {$L$}-functions.
\newblock {\em Commun. Number Theory Phys.}, 2(3):593--636, 2008.

\bibitem{conrey2007applications}
J.~B. Conrey and N.~C. Snaith.
\newblock Applications of the {$L$}-functions ratios conjectures.
\newblock {\em Proc. Lond. Math. Soc. (3)}, 94(3):594--646, 2007.

\bibitem{diaconu2021third}
A.~Diaconu and I.~Whitehead.
\newblock On the third moment of {$L(\frac{1}{2}, \chi_d)$} {II}: the number field case.
\newblock {\em J. Eur. Math. Soc. (JEMS)}, 23(6):2051--2070, 2021.

\bibitem{florea2021negative}
A.~Florea.
\newblock Negative moments of {$L$}-functions with small shifts over function fields.
\newblock {\em Int. Math. Res. Not. IMRN}, 2024(3):2298--2337, 2024.

\bibitem{joni6}
\'{E}. Fouvry and M.~Radziwi\l{}\l{}.
\newblock Level of distribution of unbalanced convolutions.
\newblock {\em Ann. Sci. \'{E}c. Norm. Sup\'{e}r. (4)}, 55(2):537--568, 2022.

\bibitem{Gor24}
O.~Gorodetsky.
\newblock Magic squares, the symmetric group and {M}öbius randomness.
\newblock {\em Monatshefte für Mathematik}, 204(1):27–46, April 2024.

\bibitem{joni5}
A.~Granville and X.~Shao.
\newblock Bombieri-{V}inogradov for multiplicative functions, and beyond the {$x^{1/2}$}-barrier.
\newblock {\em Adv. Math.}, 350:304--358, 2019.

\bibitem{joni4}
B.~Green.
\newblock A note on multiplicative functions on progressions to large moduli.
\newblock {\em Proc. Roy. Soc. Edinburgh Sect. A}, 148(1):63--77, 2018.

\bibitem{guzhang}
H.~Gu and Z.~Zhang.
\newblock Universality and phase transitions in low moments of secular coefficients of critical holomorphic multiplicative chaos.
\newblock {\em {\tt arXiv:2401.05681}}, 2024.

\bibitem{harper2013sharp}
A.~J. Harper.
\newblock Sharp conditional bounds for moments of the {R}iemann zeta function.
\newblock {\em {\tt arXiv:1305.4618}}, 2013.

\bibitem{HarperLow}
A.~J. Harper.
\newblock Moments of random multiplicative functions, {I}: {L}ow moments, better than squareroot cancellation, and critical multiplicative chaos.
\newblock {\em Forum Math. Pi}, 8:e1, 95, 2020.

\bibitem{harpertypical}
A.~J. Harper.
\newblock The typical size of character and zeta sums is $o(\sqrt{x})$.
\newblock {\em {\tt arXiv:2301.04390}}, 2023.

\bibitem{Helson}
H.~Helson.
\newblock Hankel forms.
\newblock {\em Studia Math.}, 198(1):79--84, 2010.

\bibitem{iwaniec2004analytic}
H.~Iwaniec and E.~Kowalski.
\newblock {\em Analytic number theory}, volume~53 of {\em American Mathematical Society Colloquium Publications}.
\newblock American Mathematical Society, 2004.

\bibitem{MR16}
K.~Matom\"{a}ki and M.~Radziwi\l{}\l{}.
\newblock Multiplicative functions in short intervals.
\newblock {\em Ann. of Math. (2)}, 183(3):1015--1056, 2016.

\bibitem{MT23}
K.~Matom\"{a}ki and J.~Ter\"{a}v\"{a}inen.
\newblock On the {M}\"{o}bius function in all short intervals.
\newblock {\em J. Eur. Math. Soc. (JEMS)}, 25(4):1207--1225, 2023.

\bibitem{maynard1}
J.~Maynard.
\newblock Primes in arithmetic progressions to large moduli {I}: {F}ixed residue classes.
\newblock {\em Mem. AMS; {\tt arXiv:2006.06572}}, 2020.

\bibitem{maynard2}
J.~Maynard.
\newblock Primes in arithmetic progressions to large moduli {II}: {W}ell-factorable estimates.
\newblock {\em Mem. AMS; {\tt arXiv:2006.07088}}, 2020.

\bibitem{maynard3}
J.~Maynard.
\newblock Primes in arithmetic progressions to large moduli {III}: {U}niform residue classes.
\newblock {\em Mem. AMS; {\tt arXiv:2006.08250}}, 2020.

\bibitem{MPPRW}
J.~Miller, P.~Patzt, D.~Petersen, and O.~Randal-Williams.
\newblock Uniform twisted homological stability.
\newblock {\em {\tt arXiv:2402.00354}}, 2024.

\bibitem{MV2007}
H.~L. Montgomery and R.~C. Vaughan.
\newblock {\em Multiplicative number theory. {I}. {C}lassical theory}, volume~97 of {\em Cambridge Studies in Advanced Mathematics}.
\newblock Cambridge University Press, 2007.

\bibitem{NPS}
J.~Najnudel, E.~Paquette, and N.~Simm.
\newblock Secular coefficients and the holomorphic multiplicative chaos.
\newblock {\em Ann. Probab.}, 51(4):1193--1248, 2023.

\bibitem{sawin2022square}
W.~Sawin.
\newblock Square-root cancellation for sums of factorization functions over squarefree progressions in {$\mathbb{F}_q[t]$}.
\newblock {\em Acta Math; {\tt arXiv:2102.09730}}, 2022.

\bibitem{soundararajan2009moments}
K.~Soundararajan.
\newblock Moments of the {R}iemann zeta function.
\newblock {\em Ann. of Math. (2)}, 170(2):981--993, 2009.

\bibitem{joni3}
K.~Soundararajan.
\newblock Partial sums of the {M}\"{o}bius function.
\newblock {\em J. Reine Angew. Math.}, 631:141--152, 2009.

\bibitem{szabo2024high}
B.~Szab\'{o}.
\newblock High moments of theta functions and character sums.
\newblock {\em Mathematika}, 70(2):Paper No. e12242, 37, 2024.

\bibitem{ratio2024forthcoming}
V.~Y. Wang.
\newblock Notes on zeta ratio stabilization.
\newblock {\em {\tt arXiv:2402.01214}}, 2024.

\bibitem{Xu}
M.~W. Xu.
\newblock Better than square-root cancellation for random multiplicative functions.
\newblock {\em Trans. Amer. Math. Soc. Ser. B}, 11:482--507, 2024.

\bibitem{ye2014bounding}
L.~Ye.
\newblock Bounding sums of the {M}\"{o}bius function over arithmetic progressions.
\newblock {\em {\tt arXiv:1406.7326}}, 2014.

\end{thebibliography}

\end{document}